\newcommand\argmin{\mathop{\rm argmin}}
\newcommand\dist{\mathop{\rm dist}}
\newcommand\intr{\mathop{\rm int}}
\newcommand\proj{\mathrm{proj}}
\newcommand\conv{\mathop{\rm conv}}
\newcommand\bz{\bm{0}}
\newtheorem{corollary}{Corollary}[section]
\newtheorem{theorem}{Theorem}[section]
\newtheorem{lemma}{Lemma}[section]
\newcommand{\bzeta}{\bm{\zeta}}
\newcommand{\btheta}{\bm{\theta}}
\newcommand{\blambda}{\bm{\lambda}}
\newcommand{\bmu}{\bm{\mu}}
\newcommand\eps{\epsilon}
\renewcommand{\a}{\bm{a}}
\renewcommand{\b}{\bm{b}}
\renewcommand{\c}{\bm{c}}
\renewcommand{\d}{\bm{d}}
\newcommand{\R}{{\mathbb{R}}} 
\newcommand\p{\bm{p}}
\newcommand\q{\bm{q}}
\renewcommand\r{\bm{r}}
\newcommand\s{\bm{s}}
\renewcommand\u{\bm{u}}
\renewcommand\v{\bm{v}}
\newcommand\w{\bm{w}}
\newcommand\x{\bm{x}}
\newcommand\y{\bm{y}}
\newcommand\z{\bm{z}}
\newcommand\twovec[2]{\begin{pmatrix} #1 \\ #2 \end{pmatrix}}
\newcommand{\zer}{\mathop{\mathrm{zer}}}
\newcommand{\diag}{\mathop{\mathrm{diag}}}
\newcommand\e{\bm{e}}
\newcommand\mA{\mathcal{A}}
\newcommand\mC{\mathcal{C}}
\newcommand\mS{\mathcal{S}}
\newcommand\mN{\mathcal{N}}
\renewcommand\O{\mathcal{O}}
\newcommand\bnu{\bm{\nu}}
\newcommand\bell{\bm{\ell}}
\newcommand\bpsi{\bm{\psi}}
\title{Data-Dependent Complexity of First-Order Methods for Binary Classification\thanks{Supported in part by a Discovery Grant from the Natural Sciences and Engineering Research Council (NSERC) of Canada.}}
\author{Matthew Hough\thanks{Department of Combinatorics \& Optimization,
University of Waterloo, 200 University Ave.~W., Waterloo, ON, N2L 3G1,
Canada, {\tt mhough@uwaterloo.ca}.} \and 
Stephen A. Vavasis\thanks{Department of Combinatorics \& Optimization,
University of Waterloo, 200 University Ave.~W., Waterloo, ON, N2L 3G1,
Canada, {\tt vavasis@uwaterloo.ca.}}}
\begin{document}
\maketitle
\begin{abstract}
Large-scale problems in data science are often modeled with optimization, and the optimization model is usually solved with
first-order methods that may converge at a sublinear rate. Therefore, it is of interest to terminate the optimization
algorithm as soon as the underlying data science task is accomplished. We consider FISTA for solving two binary classification problems:
the ellipsoid separation problem (ESP), and the soft-margin support-vector machine (SVM). For the ESP, we cast the dual
second-order cone program into a form amenable to FISTA and show that the FISTA residual converges to the infimal displacement vector of
the primal-dual hybrid gradient (PDHG) algorithm, that directly encodes a separating hyperplane. We further derive
a data-dependent iteration upper bound scaling as $\O(1/\delta_{\mA}^2)$, where $\delta_{\mA}$ is the minimal perturbation
that destroys separability. For the SVM, we propose a strongly-concave perturbed dual that admits efficient FISTA updates under a linear time
projection scheme, and with our parameter choices, the objective has small condition number, enabling rapid convergence. We prove that,
under a reasonable data model, early-stopped iterates identify well-classified points and yield a hyperplane that exactly separates them, where the accuracy required of the dual iterate is governed by geometric properties of the data. In particular, the proposed early-stopping criteria diminish the need for hard-to-select tolerance-based stopping conditions. Our numerical experiments on ESP instances derived from MNIST
data and on soft-margin SVM benchmarks indicate competitive runtimes and substantial speedups from stopping early.
\end{abstract}

\section{Introduction}
Binary classification is a central task in machine learning with numerous applications. Given data separated into two classes, the task
is to construct a decision boundary called a \textit{classifier} that separates the classes completely, or with minimal
misclassifications. Since many binary classification models, including the support vector machine (SVM), can be modeled as convex optimization problems,
first-order methods are a popular choice for training these models, especially in large-scale settings.
Typically, algorithms for binary classification are guaranteed to yield a classifier under the assumption that numerical convergence
of the underlying optimization problem is achieved. In practice, however, high-accuracy optimization is often unnecessary to correctly classify most points:
a hyperplane that correctly classifies ``well-separated" or ``easy to classify" points may emerge after only a small number of iterations.
One would expect that the ability to obtain such an approximate separator in few iterations depends on the
\textit{classifiability} of the data, i.e. the extent to which the data contain points that are robustly separable from the opposite class.
In such cases, the computational effort required to find a useful classifier is not solely determined by worst-case complexity
bounds of the optimization problem, but can instead be characterized in terms of \textit{data-dependent complexity}, where
the number of iterations required to obtain a classifier scales with geometric properties of the dataset rather than purely algorithmic constants.

The work herein investigates data-dependent complexity in two related binary classification tasks. The first is the
ellipsoid separation problem (ESP), in which each class is represented by a finite
family of ellipsoids, and the task is to determine a separating hyperplane that completely separates one class of ellipsoids from the other.
This problem can be seen as an extension of classic binary classification, where the locations of the data
are known only up to an ellipsoid, and a separating hyperplane must be found which separates all possible locations of the data.
Modeling data uncertainty in this way is a form of robust classification, and was considered by Shivaswamy et al. in \cite{Shivaswamy2006}.
In the ellipsoid separation setting, we show that the FISTA algorithm \cite{FISTA} applied to a second-order cone program (SOCP), which
models the dual of the ESP, yields iterates whose residual vectors encode a separating hyperplane. Furthermore,
we prove that the ESP can be solved in $\O(1/\delta_{\mA}^2)$ iterations, where $\delta_{\mA}$ represents the smallest
perturbation to the ellipsoid centers, such that the ellipsoids are no longer separable.
The second problem we consider is the soft-margin support vector machine (SVM) for non-separable data introduced in \cite{Cortes1995}.
Here, a perfect separator does not exist, but many points may still be classifiable. We propose a perturbed dual formulation
of the soft-margin SVM that is strongly concave and converges quickly in FISTA due to the objective having a condition number
$\kappa = L/\mu \leq 513$. We prove that, under a suitable data model, an approximate solution to the perturbed dual can be used
to identify all well-classified points, and furthermore we show how one can construct a separating hyperplane from this
approximate dual solution that can separate the well-classified points exactly. The degree of approximation required of the dual solution
is also itself dependent on the geometry of the data.

For both the ESP and soft-margin SVM, we perform numerical experiments that show considerable speedup can be achieved
using our methods. In particular, our method for approximately solving soft-margin support vector machines performs similarly to LIBSVM \cite{LIBSVM} and LIBLINEAR \cite{LIBLINEAR}, and on average outperforms Pegasos \cite{Pegasos} in terms of runtime in seconds and accuracy on select problems from the UCI Machine Learning Dataset \cite{UCI}.

Binary classification has been extensively studied in the
contexts of convex optimization and machine learning, most notably through the development of the soft-margin support vector machine by Cortes and Vapnik \cite{Cortes1995}.
Various specialized algorithms to solve the SVM have been proposed over time. Osuna, Freund, and Girosi \cite{Osuna1997} developed
a decomposition method for the dual SVM, which partitions the quadratic program (QP) into smaller subproblems and makes
training feasible even when the number of support vectors is large. Platt then refined this work in his development of Sequential Minimal Optimization (SMO) \cite{Platt1998}, which instead decomposes the QP into
the smallest possible subproblems, allowing them to be solved analytically. Around the same time, Joachims independently developed SVMlight \cite{Joachims1998}, a more general decomposition framework than Platt's. Further refinements to
the decomposition ideas of Osuna et al., Platt, and Joachims were proposed in \cite{Fan2005}, which forms the basis
of the widely used LIBSVM library for solving SVMs \cite{LIBSVM}. In parallel, primal-based algorithms were also developed,
most notably Pegasos \cite{Pegasos}, which employs stochastic subgradient descent to solve the primal SVM efficiently on very large-scale datasets. Shortly after, the LIBLINEAR solver was developed specifically for large-scale linear classification \cite{LIBLINEAR}.
Beyond algorithmic advances, there is also a line of research analyzing the
data-dependent complexity of algorithms. These ideas have been investigated recently in \cite{Dadush2023, Applegate2023, Lu2023, Lu2023infimal, Xiong2024}, but
are related to earlier work done by Freund and Vera \cite{FreundVera1999}, and Cucker and Peña \cite{CuckerPena2002}
on convergence rates for optimization algorithms that depend on the conditioning of the problem instance. However, all these works focus on the optimization
problem itself, without an underlying data science problem such as binary classification.

The rest of this paper is separated into two main sections. The first, Section~\ref{sec:esp}, covers the ESP, where
in Section~\ref{sec:esp-model} we model the ESP, and in Section~\ref{sec:esp-fista} we
discuss using FISTA to solve the ESP. Section~\ref{sec:esp-fista-sep-hyp} builds on the theory of the Primal-Dual Hybrid Gradient algorithm to show how to obtain a separating hyperplane from the iterates of the FISTA formulation, and Section~\ref{sec:esp-data-dep-bound} introduces a data-dependent bound on the number of FISTA iterates required to find an approximate separator. The second main section is Section~\ref{sec:svm}, which
covers using FISTA to approximately solve the soft-margin SVM problem. Section~\ref{sec:svm-fista} discusses how
FISTA can be used to solve the standard SVM problem by considering its dual. In this section it is observed that the problem of projecting onto the constraint
set of the dual SVM is a continuous quadratic knapsack problem, which can be solved in linear time. The section ends by noting that the
dual SVM may have multiple minimizers, motivating Section~\ref{sec:svm-p-dual}, which introduces a perturbed dual SVM formulation
and subsequently finds its dual to be a perturbed version of the standard primal SVM problem. Relevant theory is also introduced in this section
that will be relied upon later. Sections~\ref{sec:svm-assumptions} and \ref{sec:svm-params} introduce the assumptions and algorithmic parameters we use
to obtain the results of this main section. Section~\ref{sec:svm-wc-points} details how well-classified points can be identified from an approximate dual solution,
while Section~\ref{sec:svm-hyperplane-separation} contains the main result of Section~\ref{sec:svm}: how FISTA stopped early can obtain a hyperplane that separates the well-classified
points.
Implementation details and numerical experiments for both the ESP and SVM are provided in Section~\ref{sec:numerics}, before the conclusion of the paper. Details not essential
to the readability of the paper are left to the appendix.

The contributions of this paper are both theoretical and practical. We show that two binary classification problems, namely the ellipsoid separation problem and the
soft-margin support vector machine, can be solved using FISTA. Moreover, we develop data-dependent complexity results for both the ESP and SVM
that show that a good approximate separator can be obtained from the FISTA iterates when stopping early, giving rise to data-dependent stopping conditions that diminish the need for tolerance-based stopping conditions that may be difficult to select. In numerical experiments, our FISTA implementation for the SVM problem is competitive with widely used algorithms for solving SVMs, especially on large-scale problems, making it a candidate for routine use on large-scale instances of SVM.

\section{Ellipsoid separation} \label{sec:esp}
\subsection{Modeling the ellipsoid separation problem} \label{sec:esp-model}
In the ellipsoid separation problem, we are given $j+l$ ellipsoids $C_1,\hdots,C_j, D_1,\hdots, D_l$ in $\R^d$ and
we want to know if there exists a hyperplane $\{\z \in \R^d : \w^T\z = s\}$ such that $\w^T\z < s$ for all $\z \in C_1\cup\hdots\cup C_j$ and
$\w^T\z > s$ for all $\z \in D_1 \cup \hdots \cup D_l$.

Denote the ellipsoids as
\begin{align*}
    C_i=\{\z:\Vert A_i^{-1}(\z-\c_i)\Vert\le 1\}, i=1,\ldots,j,\\
    D_i=\{\z:\Vert B_i^{-1}(\z-\d_i)\Vert\le 1\}, i=1,\ldots,l,
\end{align*}
where $A_i$ and $B_i$ are $d\times d$ nonsingular matrices, and $\c_i$ and $\d_i$ are $d$-vectors that represent the
centers of the ellipsoids.
We make the assumption throughout this section that
\begin{equation} \label{eqn:normalization-assumption}
  \max\left\{\max_i\left\{\lVert \c_i\rVert, \lVert A_i\rVert\right\}, \max_j\left\{\lVert \d_j\rVert, \lVert B_j\rVert\right\}\right\} = 1.
\end{equation}

Let us begin with a lemma that characterizes the containment of an ellipsoid in a halfspace.
\begin{restatable}{lemma}{EllipsoidSoc} \label{lem:ellipsoid-soc}
  Consider the ellipsoid $E:=\{\z:\Vert A^{-1}(\z-\c)\Vert\le 1\}$, where $A \in \R^{d\times d}, \c \in \R^d$, and $s \in \R$.
  Let $\w \in \R^d\setminus\{\bz\}$. Then $E$ lies in a halfspace $H:=\{\z:\w^T\z\le -s\}$ if and only if $-s\ge\Vert A^T\w\Vert+\c^T\w$.
  Further, $E\subseteq\intr(H)$ if and only if this inequality is strict.
\end{restatable}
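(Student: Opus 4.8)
The plan is to reduce the set-containment statement to a support-function computation over the Euclidean unit ball. Since $A$ is nonsingular, I would first reparameterize the ellipsoid: every $\z \in E$ can be written as $\z = \c + A\u$ with $\norm{\u} \le 1$, and conversely every such point lies in $E$. Under this substitution $\w^T\z = \c^T\w + (A^T\w)^T\u$, so the condition $E \subseteq H$, i.e.\ $\w^T\z \le -s$ for all $\z \in E$, is equivalent to the single inequality
\begin{equation*}
  \c^T\w + \max_{\norm{\u}\le 1}(A^T\w)^T\u \le -s.
\end{equation*}

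Next I would evaluate the inner maximum. By the Cauchy--Schwarz inequality, $(A^T\w)^T\u \le \norm{A^T\w}\,\norm{\u} \le \norm{A^T\w}$ for every $\u$ in the unit ball, with equality attained at $\u^\star = A^T\w / \norm{A^T\w}$. Here $A^T\w \ne \bz$ because $A$ is nonsingular and $\w \ne \bz$, so $\u^\star$ is well-defined and has unit norm. Substituting $\max_{\norm{\u}\le 1}(A^T\w)^T\u = \norm{A^T\w}$ turns the displayed condition into $\c^T\w + \norm{A^T\w} \le -s$, which is exactly the stated characterization of $E \subseteq H$.

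For the interior claim, the same parameterization shows $E \subseteq \intr(H)$ if and only if $\c^T\w + (A^T\w)^T\u < -s$ for all $\norm{\u}\le 1$. The forward direction is immediate from the bound above: if $-s > \norm{A^T\w} + \c^T\w$, then every point of $E$ satisfies the strict inequality. For the converse, I would use the maximizer $\u^\star$ directly: the point $\z^\star = \c + A\u^\star$ lies in $E$ and satisfies $\w^T\z^\star = \c^T\w + \norm{A^T\w}$, so $\z^\star \in \intr(H)$ forces $\c^T\w + \norm{A^T\w} < -s$, i.e.\ the strict inequality.

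The argument is essentially routine, and I do not anticipate a genuine obstacle. The only point requiring care is the converse of the interior case, which needs an explicit witness in $E$ that achieves the support-function value $\norm{A^T\w}$; this is supplied by the boundary point $\z^\star$, and its existence rests on the nonvanishing of $A^T\w$ (equivalently, on compactness of the unit ball guaranteeing the maximum is attained).
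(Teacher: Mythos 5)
Your proof is correct and is essentially the same argument as the paper's: both directions rest on the Cauchy--Schwarz bound $(A^T\w)^T\u \le \Vert A^T\w\Vert$ over the unit ball, and the converse uses the identical witness point $\z^\star = \c + AA^T\w/\Vert A^T\w\Vert$ that the paper constructs. Your support-function phrasing via the substitution $\z = \c + A\u$ is just a cleaner packaging of the paper's direct manipulation, so no further comparison is needed.
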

\begin{proof}
    See Section~\ref{app:esp-proofs} of the appendix.
\end{proof}
For each $i=1,\hdots,j$, we want the ellipsoids $C_i$ to be contained in the halfspace defined by $(\w,-s)$, i.e. $\{\z : \lVert A_i^{-1}(\z - \c_i)\rVert \leq 1 \} \subseteq \{\z : \w^T\z \leq -s\}$. We have shown this is equivalent to
requiring $\lVert A_i^T\w\rVert \leq -s - \c_i^T\w$ and the reader may observe that this is equivalent to writing
\[
  \begin{pmatrix}
    s + \c_i^T\w\\
    A_i^T\w
  \end{pmatrix} \in -C_2^{d+1},
\]
where $C_2^{d+1}$ is the second-order cone. Similarly, we wish for all $i = 1,\hdots,l$ that $\{\z : \lVert B_i^{-1}(\z - \d_i)\rVert \leq 1 \} \subseteq \{\z : \w^T\z \geq t\}$, or equivalently
\[
  \begin{pmatrix}
    t - \d_i^T\w\\
    B_i^T\w
  \end{pmatrix} \in -C_2^{d+1},
\]
The problem of finding two disjoint halfspaces parameterized by $(\w,-s)$ and $(\w,t)$ such that the ellipsoids $C_i$ for $i=1,\hdots,j$ are contained in the former halfspace and the ellipsoids $D_i$ for $i = 1,\hdots,l$ are contained in the latter, can be written as a homogeneous second-order cone program (SOCP):
\begin{equation}\label{prob:socp-esp-primal}
\begin{array}{rl}
  \max_{s,t,\w} & s+t \\
\mbox{s.t.} &
\mA^T\begin{pmatrix} s \\ t \\ \w\end{pmatrix}
  \preceq_{\mC} \bz,
\end{array}
\end{equation}
where the matrix $\mA^T\in\R^{ (j+l)(d+1)\times(d+2)}$ is
  \[
    \mA^T := \begin{pmatrix}
      \bar{A}_1^T\\
      \vdots\\[4pt]
      \bar{A}_j^T\\[4pt]
      \bar{B}_1^T\\
      \vdots\\[4pt]
      \bar{B}_l^T
    \end{pmatrix},
  \]
  where
  \[
    \bar{A}_i^T:=
    \begin{pmatrix}
    1 & 0 & \c_i^T \\
    \bz & \bz & A_i^T
    \end{pmatrix},\quad
    \bar{B_i}^T:=
    \begin{pmatrix}
        0 & 1 & -\d_i^T \\
        \bz & \bz & B_i^T
    \end{pmatrix},
  \]
  and the notation $\a \preceq_{\mC} \b$ means that $\a - \b \in -\mC$, where
  \[
    \mC:=\underbrace{C_2^{d+1}\times\cdots\times C_2^{d+1}}_{(j+l)\>\mathrm{times}}.
  \]
Due to homogeneity, the objective does not yield a finite maximum; rather, feasibility with $s + t > 0$ certifies that the halfspaces are disjoint and a separator exists. Otherwise, the optimum is $0$, certifying that no separating hyperplane exists.

\subsection{FISTA for ellipsoid separation} \label{sec:esp-fista}
It is not apparent how to apply FISTA \cite{FISTA} directly to \eqref{prob:socp-esp-primal}, since there is no closed-form expression to project
onto the constraint set $\{(s,t,\w) : \mA^T(s,t,\w) \preceq_{\mC} \bz\}$. It turns out that the dual SOCP admits a constraint set that can be efficiently
projected onto. Introduce the dual variables $\lambda_1,\hdots,\lambda_j,\mu_1,\hdots,\mu_l \in \R$ and $\p_1,\hdots,\p_j,\q_1,\hdots,\q_l \in \R^d$, and define
\[
  \x := \begin{pmatrix}
    \lambda_1\\
    \p_1\\
    \vdots\\
    \lambda_j\\
    \p_j\\
    \mu_1\\
    \q_1\\
    \vdots\\
    \mu_l\\
    \q_l
  \end{pmatrix}.
\]
The dual SOCP is then
\begin{equation}\label{prob:socp-esp-dual}
\begin{array}{rl}
  \min_{\x} & 0\\
\mbox{s.t.} & \mA\x = \b\\
&\x \succeq_{\mC} \bz,
\end{array}
\end{equation}
where $\b = (1,1,\bz)$. Written explicitly, the constraint $\mA\x = \b$ becomes
\begin{equation} \label{eqn:esp-explicit-constraint}
  \begin{split}
    &\lambda_1+\cdots+\lambda_j=1\\
    &\mu_1+\cdots+\mu_l=1, \\
    &\sum_{i=1}^j\left(\lambda_i\c_i+A_i\p_i\right)-\sum_{i=1}^l\left(\mu_i\d_i-B_i\q_i\right)=\bz,
  \end{split}
\end{equation}
while the constraint $\x \succeq_{\mC} \bz$ becomes
\begin{align*}
  \Vert \p_i\Vert&\le \lambda_i &&\forall i=1,\ldots, j \\
  \Vert \q_i\Vert&\le \mu_i &&\forall i=1,\ldots, l.
\end{align*}
The above constraints have a nice geometric interpretation: they characterize the points in
$\conv\left(C_1\cup \hdots\cup C_j\right) \cap \conv\left(D_1 \cup \hdots \cup D_l\right)$, so solving
\eqref{prob:socp-esp-dual} is equivalent to finding a point in the above intersection of convex hulls. Strong
duality holds for \eqref{prob:socp-esp-dual} due to the geometric fact that either a point exists in the above
intersection or there exists a strict separating hyperplane for the two convex hulls. In this context,
strong duality means that either the primal optimizer is $0$ and the dual is feasible and attains the value 0
(there is no separating hyperplane), or the primal maximum is $+\infty$ and the dual is infeasible
(there is a separating hyperplane).

In order to formulate \eqref{prob:socp-esp-dual} for FISTA, we use the fact that the objective is identically zero to
move the constraint $\mA\x = \b$ to the objective, obtaining the new problem:
\begin{equation}\label{prob:socp-esp-dual2}
\begin{array}{rl}
  \min_{\x} & f(\x)\\
\mbox{s.t.} &\x \succeq_{\mC} \bz,
\end{array}
\end{equation}
where $f(\x) := \frac{1}{2}\lVert\mA\x - \b\rVert^2$. Our SOCP \eqref{prob:socp-esp-dual2} is now in a form FISTA can solve: $\min f(\x) + i_{\mC}(\x)$,
where we note that projection onto the second-order cone can be done in linear time (cf. \cite[Example 6.37]{Beck2017}).

\subsection{From PDHG to FISTA: obtaining a separating hyperplane from the iterates} \label{sec:esp-fista-sep-hyp}
Primal-dual hybrid gradient (PDHG) \cite{Chambolle2011} can also be applied to the primal-dual formulation of the ESP formed by \eqref{prob:socp-esp-primal} and \eqref{prob:socp-esp-dual}, however its convergence rate in terms of function values is $\O(1/k)$, compared to FISTA which converges at a rate of $\O(1/k^2)$. Despite this, PDHG has rich theory that we make use of throughout Section~\ref{sec:esp}. For the theoretical results from PDHG related to this section, we refer the reader to Section~\ref{app:esp-pdhg} of the appendix. With the results of Section~\ref{app:esp-pdhg}, we may prove a theorem connecting the infimal displacement vector of PDHG to the residuals of FISTA when applied to \eqref{prob:socp-esp-dual2}:

\begin{theorem} \label{thm:pdhg-to-fista}
  Consider the FISTA formulation of the standard conic form problem where $\c = \bz$, namely $\min\{f(\x) := \frac{1}{2}\lVert\mA\x - \b\rVert^2 : \x \in \mC\}$, and assume that
  $\mathrm{Null}(\mA)\cap\mC = \{\bz\}$ and $\v = (\v_R,\v_D)$ is the infimal displacement vector of PDHG applied to the ESP. Then the following hold:
  \begin{enumerate}[label=(\roman*)]
    \item There exists $\bar{\x}\in\mC$ such that $\b - \mA\bar{\x} = \v_D$.
    \item $\v_D$ is the unique minimal residual vector for FISTA, i.e. $\mS := \{\x \in \mC : \b - \mA\x = \v_D\}$
      is the set of optimizers for \eqref{prob:socp-esp-dual2}.
  \end{enumerate}
\end{theorem}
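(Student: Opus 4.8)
The plan is to reduce both claims to a single geometric fact: under the hypothesis $\mathrm{Null}(\mA)\cap\mC=\{\bz\}$, the image cone $\mA\mC:=\{\mA\x:\x\in\mC\}$ is closed, so that $\b$ has a well-defined projection onto it and the set of attainable FISTA residuals $\b-\mA\mC$ is a closed convex set with a unique minimal-norm element. Everything then comes down to identifying $\v_D$ with that element.

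First I would establish closedness of $\mA\mC$ by a recession-cone argument. If $\mA\x^k\to\y$ with $\x^k\in\mC$, then either $\{\x^k\}$ is bounded, so a subsequence converges to some $\x\in\mC$ (the cone being closed) with $\mA\x=\y$; or $\norm{\x^k}\to\infty$, in which case the normalized sequence $\x^k/\norm{\x^k}$ has a subsequential limit $\bar{\x}\in\mC$ with $\norm{\bar{\x}}=1$ and $\mA\bar{\x}=\bz$, contradicting $\mathrm{Null}(\mA)\cap\mC=\{\bz\}$. Hence $\proj_{\mA\mC}(\b)$ exists and $\r^\star:=\b-\proj_{\mA\mC}(\b)$ is the unique minimal-norm element of $\b-\mA\mC$. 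Because $f(\x)=\tfrac12\norm{\mA\x-\b}^2$, the minimizers of \eqref{prob:socp-esp-dual2} are exactly $\mS_0:=\{\x\in\mC:\mA\x=\proj_{\mA\mC}(\b)\}$, all sharing the single residual $\r^\star$; this is the object to which I want to pin $\v_D$.

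For (i) I would invoke the PDHG results of Section~\ref{app:esp-pdhg}. The dual block of the PDHG iteration updates the multiplier by a step proportional to the primal residual $\mA\x-\b$, so the asymptotic dual displacement satisfies $\v_D=\lim_k(\b-\mA\bar{\x}^k)$ for iterates $\bar{\x}^k\in\mC$ (up to the fixed step-size normalization carried in the appendix). Thus $\mA\bar{\x}^k\to\b-\v_D$ with $\mA\bar{\x}^k\in\mA\mC$, and closedness gives $\b-\v_D\in\mA\mC$; that is, there is $\bar{\x}\in\mC$ with $\mA\bar{\x}=\b-\v_D$, i.e.\ $\b-\mA\bar{\x}=\v_D$, which is (i).

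For (ii) it remains to show $\v_D=\r^\star$, since then $\mS=\{\x\in\mC:\b-\mA\x=\v_D\}=\mS_0$ is precisely the optimizer set. Here I would use the defining minimality of the infimal displacement vector: $\v=(\v_R,\v_D)$ is the minimum-norm element of $\overline{\mathrm{ran}}(\mathrm{Id}-T)$ for the nonexpansive PDHG operator $T$, and the appendix's characterization identifies its dual block with the minimum-norm attainable residual, $\v_D=\argmin\{\norm{\r}:\r\in\b-\mA\mC\}=\r^\star$. I expect the main obstacle to be exactly this last identification: translating the \emph{joint} minimality of $\norm{(\v_R,\v_D)}$ over the coupled primal--dual displacement range into minimality of the dual block $\v_D$ alone as a residual. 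This needs the structural decomposition of $\mathrm{ran}(\mathrm{Id}-T)$ furnished by the PDHG theory—showing that the primal block $\v_R$ does not draw norm away from $\v_D$, so that minimizing the full displacement is consistent with $\v_D$ being the minimal residual—rather than any further property of the data.
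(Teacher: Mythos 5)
Your reduction in the first step is correct and is genuinely different from the paper's: under $\mathrm{Null}(\mA)\cap\mC=\{\bz\}$ the cone $\mA\mC$ is closed by your recession argument, so the FISTA optimizers are exactly $\{\x\in\mC:\mA\x=\proj_{\mA\mC}(\b)\}$, all sharing the unique minimal residual $\r^\star$. Your part (i) also works, but it silently needs $\v_R=\bz$: the PDHG dual displacement is proportional to $\mA(2\x_{k+1}-\x_k)-\b$, not to $\mA\x_{k+1}-\b$, and since $2\x_{k+1}-\x_k$ need not lie in $\mC$, you must kill the extrapolation term $\mA(\x_{k+1}-\x_k)$ by invoking Theorem~\ref{thm:JMV2023}(ii) before closedness of $\mA\mC$ can be applied. (The paper avoids the limit argument entirely: by Theorem~\ref{thm:JMV2023}(iv) the displacement vector is attained in $\mathrm{Range}(\mathrm{Id}-T)$, and $\b-\mA\bar\x=\v_D$ is read off from a single fixed displacement equation.)

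The genuine gap is the final step of (ii), and you flagged it yourself. Joint $M$-norm minimality of $(\v_R,\v_D)$ over $\mathrm{cl}(\mathrm{Range}(\mathrm{Id}-T))$ does not translate, by any decomposition or ``the primal block does not draw norm away'' argument, into Euclidean minimality of $\v_D$ over the attainable residuals $\b-\mA\mC$: the displacement range is a convex set coupling primal and dual variables through $T$, and no structural statement of the kind you hypothesize is available. What the appendix actually provides is Lemma~\ref{lem:JMV2023-1}: for every $\z\in\mA(\mC)$ one has $\v_D^T(\v_D-(\b-\z))\le 0$. Combined with your part (i), which gives $\v_D\in\b-\mA\mC$, this is precisely the variational characterization of the projection of the origin onto the closed convex set $\b-\mA\mC$, so $\v_D=\r^\star$ and your reduction closes. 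Alternatively, the paper's own proof of (ii) bypasses the projection picture altogether: it uses Theorem~\ref{thm:JMV2023}(i) ($\mA^T\v_D\in\mC^\circ$) and (iii) ($\v_D^T\mA\bar\x=0$) to verify $-\nabla f(\bar\x)=\mA^T\v_D\in\mN_{\mC}(\bar\x)$, so $\bar\x$ is optimal, and then strong convexity of $\frac{1}{2}\lVert\cdot\rVert^2$ in the residual forces every optimizer to have residual exactly $\v_D$. Either way, the missing ingredient is a specific inequality about $\v_D$ imported from the PDHG theory, not a property of $\mathrm{Range}(\mathrm{Id}-T)$ as a set; as written, your proof of (ii) is incomplete.
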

\begin{proof}
  Since $\mathrm{Null}(\mA)\cap\mC = \{\bz\}$, we have from Theorem~\ref{thm:JMV2023} that $\v_R = \bz$ and $\v \in \mathrm{Range}(\mathrm{Id} - T)$, which
  means that there exists some $(\bar{\x},\bar{\y})$ such that $\v = (\mathrm{Id} - T)(\bar{\x},\bar{\y})$.
  Referring to the PDHG update \eqref{eqn:pdhg-iteration}, where $\sigma > 0$ satisfies $\sigma < 1/\lVert\mA\rVert^2$ and $\tau = 1$, we get
  \begin{align}
    &\bar{\x} = \proj_{\mC}(\bar{\x} - \sigma\mA^T\bar{\y})\label{eqn:v-range-primal}\\
    &\v_D = \bar{\y} - \bar{\y} - \mA(2\proj_{\mC}(\bar{\x} - \sigma\mA^T\bar{\y}) - \bar{\x}) + \b\label{eqn:v-range-dual}.
  \end{align}
  From \eqref{eqn:v-range-primal}, $\bar{\x} \in \mC$, thus \eqref{eqn:v-range-dual}
  reduces to $\b - \mA\bar{\x} = \v_D$, which proves (i).

  To prove (ii), use (i) to find some $\bar{\x}$ such that $\b - \mA\bar{\x} = \v_D$. Then for any $\u \in \mC$ we may write
  \begin{align*}
    (\u - \bar{\x})^T\mA^T\v_D &= \u^T\mA^T\v_D - \bar{\x}^T\mA^T\v_D\\
                               &\leq -\v_D^T\mA\bar{\x}\\
                               &= 0,
  \end{align*}
  where the inequality comes from Theorem~\ref{thm:JMV2023} (i) and the fact that $\u \in \mC$, while the final equality comes from
  Theorem~\ref{thm:JMV2023} (iii). The inequality $(\u-\bar{\x})^T\mA^T\v_D \leq 0$ is equivalent to the condition that
  $\mA^T\v_D \in \mN_{\mC}(\bar{\x})$, which is itself equivalent to $-\nabla f(\bar{\x}) \in \mN_{\mC}(\bar{\x})$,
  where $\mN_{\mC}(\bar{\x})$ is the normal cone at the point $\bar{\x}$. It follows that $\bar{\x}$ is an optimal solution
  to $\min \{f(\x) : \x \in \mC\}$, since $f$ has full domain, is proper and convex, and $\mC$ has nonempty interior (cf. \cite[Theorem 3.67]{Beck2017}).

  To complete the proof of (ii), we note that $\frac{1}{2}\lVert\cdot\rVert^2$ is strongly convex, so any optimal solution must have the
  same value as $\b - \mA\bar{\x}$, which we know is precisely $\v_D$. Hence, $\v_D$ is the unique minimal residual vector for the FISTA formulation.
\end{proof}
Now, let
\[
  \v_D = \begin{pmatrix}
    s^*\\
    t^*\\
    \w^*
  \end{pmatrix}.
\]
From Theorem~\ref{thm:JMV2023}, we have $\mA^T\v_D \in \mC^{\circ}$, so $\mA^T(s^*,t^*,\w^*) \preceq_{\mC} \bz$ and $\v_D$ is feasible for \eqref{prob:socp-esp-primal}.
Since $\v_D$ is the limiting residual vector of FISTA, the following lemma tells us that in the separable case of the ESP,
$\v_D$ encodes a separating hyperplane.
\begin{lemma}[{\cite[Theorem 6.19]{Jiang2023}}]
  $\v_D \neq \bz \iff s^* + t^* > 0$.
\end{lemma}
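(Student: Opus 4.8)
The plan is to reduce the biconditional to the single scalar identity
\[
  s^* + t^* = \norm{\v_D}^2,
\]
after which both directions follow at once: the right-hand side is strictly positive exactly when $\v_D \neq \bz$, and the left-hand side equals $\b^T\v_D$ because $\b = (1,1,\bz)$. The forward implication $s^*+t^* > 0 \Rightarrow \v_D \neq \bz$ is in fact trivial in isolation (a strictly positive sum of two coordinates forces the vector to be nonzero), so the real content is the converse, which the identity dispatches simultaneously.

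To establish the identity, I would first invoke Theorem~\ref{thm:pdhg-to-fista}(i) to obtain $\bar{\x} \in \mC$ with $\b - \mA\bar{\x} = \v_D$, equivalently $\b = \mA\bar{\x} + \v_D$. Substituting this into $\b^T\v_D$ and recalling $\b^T \v_D = s^* + t^*$ gives
\[
  s^* + t^* = (\mA\bar{\x} + \v_D)^T\v_D = \bar{\x}^T\mA^T\v_D + \norm{\v_D}^2 .
\]
It then remains only to show that the cross term $\bar{\x}^T\mA^T\v_D = \v_D^T\mA\bar{\x}$ vanishes. This is precisely the complementarity relation supplied by Theorem~\ref{thm:JMV2023}(iii)---the same equality $\v_D^T\mA\bar{\x} = 0$ already exploited in the proof of Theorem~\ref{thm:pdhg-to-fista}(ii)---so applying it to this $\bar{\x}$ closes the computation.

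The step I expect to demand the most care is justifying the exact vanishing of the cross term rather than merely its sign. Feasibility of $\v_D$ for \eqref{prob:socp-esp-primal} only yields $\mA^T\v_D \in \mC^\circ$, and hence $\langle \mA^T\v_D, \bar{\x}\rangle \leq 0$ for $\bar{\x} \in \mC$; the identity needs the sharper statement that this inner product is exactly zero, which is exactly the third property of the infimal displacement vector in Theorem~\ref{thm:JMV2023}. Once that equality is in hand, the chain above gives $s^* + t^* = \norm{\v_D}^2 \geq 0$ with equality if and only if $\v_D = \bz$, and the claimed equivalence $\v_D \neq \bz \iff s^* + t^* > 0$ follows immediately.
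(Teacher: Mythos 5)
Your proof is correct, but it takes a somewhat different route from the paper. The paper does not actually prove this statement: it imports it verbatim as Theorem 6.19 of \cite{Jiang2023}. The identity you reduce to, $s^* + t^* = \lVert\v_D\rVert^2$, is exactly the paper's Lemma~\ref{lem:splust-equals-normvd2}, but there it is proved by citing yet another external fact, $\lVert\v_D\rVert^2 = \b^T\v_D$ (Lemma 6.8(viii) of \cite{Jiang2023}), combined with $\b = (1,1,\bz)$. You instead re-derive that inner-product identity from results stated inside the paper: Theorem~\ref{thm:pdhg-to-fista}(i) supplies $\bar{\x}\in\mC$ with $\b = \mA\bar{\x} + \v_D$, expanding $\b^T\v_D$ produces the cross term $\v_D^T\mA\bar{\x}$, and Theorem~\ref{thm:JMV2023}(iii) kills it exactly (you are right that this is the delicate point --- feasibility alone, via $\mA^T\v_D\in\mC^{\circ}$, gives only the inequality $\v_D^T\mA\bar{\x}\le 0$, and the equality genuinely requires the complementarity property of the infimal displacement vector). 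What the paper's route buys is brevity and independence from the FISTA reformulation; what yours buys is self-containedness, at the cost of inheriting the hypotheses of Theorem~\ref{thm:pdhg-to-fista}, namely $\c=\bz$ and $\mathrm{Null}(\mA)\cap\mC=\{\bz\}$. Since the paper verifies both of these for the ESP dual, nothing is lost in the context where the lemma is applied, and your argument stands.
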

To obtain a separating hyperplane from the FISTA residual $\v_D = (s^*,t^*,\w^*)$, we simply choose any $s' \in (-s^*,t^*)$. The corresponding
separating hyperplane is $\{\z : (\w^*)^T\z = s'\}$.
It remains to deduce a separator from the residual of the FISTA iterates $\v_k := \b - \mA\x_k$, and determine after how many iterations
$k$ is this separator valid.

Recall for $\v_D = (s^*,t^*,\w^*)$, we have
\begin{align*}
  &(\w^*)^T\z \leq -s^*,\quad \forall \z \in C_1\cup\hdots\cup C_j,\\
  &(\w^*)^T\z \geq t^*,\quad \forall \z \in D_1\cup\hdots\cup D_l.
\end{align*}
Define $\v_k := (s_k,t_k,\w_k)$ and let $m_k := (-s_k + t_k)/2$ so that we have a candidate separator $\{\z : \w_k^T\z = m_k\}$. We wish
to determine when this candidate separates $C_1\cup \hdots \cup C_j$ from $D_1\cup \hdots \cup D_l$.

\begin{theorem} \label{thm:vk-vd-approx-sep-bound}
  The residual $\v_k = (s_k,t_k,\w_k)$ defines an approximate separator $\{\z : \w_k^T\z = m_k\}$ provided
  \[
    \lVert\v_k - \v_D\rVert < \frac{\lVert\v_D\rVert^2}{\sqrt{18}}
  \]
\end{theorem}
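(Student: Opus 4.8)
The plan is to verify, under the stated bound on $\lVert\v_k - \v_D\rVert$, the two families of strict halfspace-containment conditions supplied by Lemma~\ref{lem:ellipsoid-soc}: that each $C_i$ lies strictly in $\{\z : \w_k^T\z < m_k\}$ and each $D_i$ strictly in $\{\z : \w_k^T\z > m_k\}$. By Lemma~\ref{lem:ellipsoid-soc} these amount to the strict scalar inequalities $\lVert A_i^T\w_k\rVert + \c_i^T\w_k < m_k$ and $\d_i^T\w_k - \lVert B_i^T\w_k\rVert > m_k$ for all $i$. First I would record the corresponding non-strict inequalities for $\v_D$ itself: since $\mA^T\v_D \preceq_{\mC}\bz$ by Theorem~\ref{thm:pdhg-to-fista}, reading off the cone membership of the blocks $\bar{A}_i^T\v_D = (s^* + \c_i^T\w^*, A_i^T\w^*)$ and $\bar{B}_i^T\v_D = (t^* - \d_i^T\w^*, B_i^T\w^*)$ gives $\lVert A_i^T\w^*\rVert + \c_i^T\w^* \leq -s^*$ and $\lVert B_i^T\w^*\rVert \leq \d_i^T\w^* - t^*$. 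Writing $\gamma := (s^* + t^*)/2$ and recalling $m^* = (-s^* + t^*)/2$, these show that $\v_D$ itself separates both families with margin exactly $\gamma$ on each side, e.g.\ $m^* - (\lVert A_i^T\w^*\rVert + \c_i^T\w^*) \geq m^* + s^* = \gamma$.

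Next I would carry out the perturbation estimate. Write $\v_k = \v_D + \bdelta$ with $\bdelta = (\delta_s, \delta_t, \bdelta_w)$. For the $C_i$ side, the triangle inequality gives $\lVert A_i^T\w_k\rVert \leq \lVert A_i^T\w^*\rVert + \lVert A_i\rVert\lVert\bdelta_w\rVert$ and $\c_i^T\w_k \leq \c_i^T\w^* + \lVert\c_i\rVert\lVert\bdelta_w\rVert$, while $m_k = m^* + (-\delta_s + \delta_t)/2$. Invoking \eqref{eqn:normalization-assumption} to bound $\lVert A_i\rVert,\lVert\c_i\rVert \leq 1$ and combining with the $\v_D$-margin above yields
\[
  m_k - \left(\lVert A_i^T\w_k\rVert + \c_i^T\w_k\right) \;\geq\; \gamma - \tfrac{1}{\sqrt 2}\sqrt{\delta_s^2 + \delta_t^2} - 2\lVert\bdelta_w\rVert,
\]
and an identical computation (using $\lVert B_i\rVert,\lVert\d_i\rVert \leq 1$) gives the same lower bound for $\d_i^T\w_k - \lVert B_i^T\w_k\rVert - m_k$. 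The point of keeping $\sqrt{\delta_s^2 + \delta_t^2}$ and $\lVert\bdelta_w\rVert$ separate, rather than crudely bounding both by $\lVert\bdelta\rVert$, is that a single Cauchy--Schwarz step then gives $\tfrac{1}{\sqrt 2}\sqrt{\delta_s^2 + \delta_t^2} + 2\lVert\bdelta_w\rVert \leq \sqrt{\tfrac12 + 4}\,\lVert\bdelta\rVert = \tfrac{3}{\sqrt 2}\lVert\bdelta\rVert$, so both separation gaps are at least $\gamma - \tfrac{3}{\sqrt 2}\lVert\bdelta\rVert$. Hence all strict inequalities hold as soon as $\lVert\bdelta\rVert < \tfrac{\sqrt 2}{3}\gamma$.

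It remains to re-express $\tfrac{\sqrt 2}{3}\gamma$ in the claimed form, and this is where the one genuinely non-routine ingredient enters: the identity $s^* + t^* = \lVert\v_D\rVert^2$. To see it, note $s^* + t^* = \langle\v_D, \b\rangle$ since $\b = (1,1,\bz)$, and that $\v_D = \b - \mA\bar{\x}$ for the $\bar{\x}\in\mC$ produced in Theorem~\ref{thm:pdhg-to-fista}; the orthogonality $\v_D^T\mA\bar{\x} = 0$ established in the proof of that theorem then gives $\langle\v_D,\b\rangle = \langle\v_D,\v_D\rangle = \lVert\v_D\rVert^2$. Substituting $\gamma = \lVert\v_D\rVert^2/2$ into $\lVert\bdelta\rVert < \tfrac{\sqrt 2}{3}\gamma$ collapses the threshold to $\lVert\v_D\rVert^2/(3\sqrt 2) = \lVert\v_D\rVert^2/\sqrt{18}$, as required. (If $\v_D = \bz$ the claimed bound is vacuous, so we may assume $s^* + t^* > 0$ and $\gamma > 0$.)

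The main obstacle is spotting that the scalar margin $s^* + t^*$ \emph{secretly} equals $\lVert\v_D\rVert^2$; without this the estimate only produces a threshold linear in $s^* + t^*$, and the appearance of $\lVert\v_D\rVert^2$ in the statement looks mysterious. The remainder is a disciplined perturbation argument whose only subtlety is grouping the $(s,t)$-error and the $\w$-error so that Cauchy--Schwarz reproduces the exact constant $\sqrt{18}$ rather than a looser $2 + 1/\sqrt 2$.
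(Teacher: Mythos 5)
Your proof is correct and arrives at exactly the paper's threshold, but via a somewhat different route. The paper argues pointwise over the data: for each $\z$ in an ellipsoid it bounds $\w_k^T\z$ using the feasibility inequality $(\w^*)^T\z \le -s^*$, Cauchy--Schwarz, and the normalization consequence $\lVert\z\rVert \le 2$, then applies a second Cauchy--Schwarz with weights $(1,1,4)$ to get $\lvert s^*-s_k\rvert + \lvert t^*-t_k\rvert + 4\lVert\w_k-\w^*\rVert \le \sqrt{18}\,\lVert\v_k-\v_D\rVert$, and finally invokes Lemma~\ref{lem:splust-equals-normvd2} for $s^*+t^* = \lVert\v_D\rVert^2$. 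You instead perturb the scalar halfspace-containment inequalities of Lemma~\ref{lem:ellipsoid-soc} directly, using $\lVert A_i\rVert, \lVert\c_i\rVert \le 1$ separately rather than $\lVert\z\rVert\le 2$; your weights $(\sqrt{2},4)$ give the same constant $\sqrt{2+16}=\sqrt{18}$, so the two groupings are equally sharp. Your route buys two things: the conclusion is strict containment of the whole ellipsoids in the two open halfspaces (marginally stronger than the paper's pointwise conclusion, and exactly the form of the termination test \eqref{eqn:esp-termination-test}), and the key identity $s^*+t^* = \b^T\v_D = \lVert\v_D\rVert^2$ is re-derived from Theorem~\ref{thm:pdhg-to-fista}(i) together with the orthogonality $\v_D^T\mA\bar{\x}=0$ of Theorem~\ref{thm:JMV2023}(iii), rather than cited from Lemma~\ref{lem:splust-equals-normvd2} (which quotes an external result), making the argument self-contained within the paper's stated theorems. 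One small attribution slip: the cone inequality $\mA^T\v_D \preceq_{\mC} \bz$ comes from Theorem~\ref{thm:JMV2023}(i) (plus self-duality of $\mC$), not from Theorem~\ref{thm:pdhg-to-fista} itself; the fact is available either way, so nothing breaks.
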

\begin{proof}
  For any $\z \in C_1\cup\hdots\cup C_j$, we have that $(\w^*)^T\z \leq -s^*$. This inequality holds true if and only if
  \[
    \w_k^T\z \leq m_k + (\w_k - \w^*)^T\z - s^* - m_k.
  \]
  Let
  \[
    r := \lVert\w_k - \w^*\rVert\cdot\lVert\z\rVert - \frac{s^* - s_k}{2} + \frac{t^* - t_k}{2} - \frac{s^* + t^*}{2}.
  \]
  By Cauchy-Schwarz, any $\z \in C_1\cup\hdots\cup C_j$ satisfies
  \begin{align*}
    \w_k^T\z &= m_k + (\w_k - \w^*)^T\z + {\w^*}^T\z - m_k\\
             &\leq m_k + \lVert\w_k - \w^*\rVert\cdot\lVert\z\rVert - s^* - m_k\\
             &= m_k + r.
  \end{align*}
  It follows that if we have $r < 0$, then $\w_k^T\z \leq m_k$. Under the normalization assumption \eqref{eqn:normalization-assumption},
  we have that $\lVert\z\rVert \leq 2$, and it is easy to verify that $r < 0$ is implied by the condition
  \begin{equation} \label{eqn:splust-condition}
    s^* + t^* > \lvert s^* - s_k\rvert + \lvert t^* - t_k\rvert + 4\lVert\w_k - \w^*\rVert.
  \end{equation}
  The implication $\z \in D_1 \cup \hdots \cup D_l \implies \w_k^T\z > m_k$ holds under the same assumption.
  Therefore, the satisfaction of the condition \eqref{eqn:splust-condition} implies that the residual $\v_k$ defines
  an approximate separator.

  Now observe the bound
  \[
    \lvert s^* - s_k\rvert + \lvert t^* - t_k\rvert + 4\lVert\w_k - \w^*\rVert \leq \sqrt{18}\lVert\v_k - \v_D\rVert
  \]
  holds, so a new condition for $\v_k$ defining an approximate separator is
  \[
    \lVert\v_k - \v_D\rVert \leq \frac{s^* + t^*}{\sqrt{18}}.
  \]
  Applying Lemma~\ref{lem:splust-equals-normvd2} completes the proof.
\end{proof}

\subsection{Obtaining a data-dependent bound on the number of FISTA iterations} \label{sec:esp-data-dep-bound}
In order to derive a data-dependent bound on the number of FISTA iterations needed to obtain a separating hyperplane, we must relate $\lVert\v_D\rVert$ to the geometry of the underlying data. This is done in the following theorem.
\begin{restatable}{theorem}{SlabWidth} \label{thm:slab-width}
  Define $\delta_{\mA}$ to be the smallest perturbation to the points $\c_i$ and $\d_i$ such that their
  ellipsoids are no longer separable. Then
  \[
    \delta_{\mA} \leq 3\lVert\v_D\rVert.
  \]
\end{restatable}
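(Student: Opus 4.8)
The plan is to prove the bound constructively: I will exhibit an explicit perturbation of the centers $\c_i,\d_i$ that forces the two convex hulls to share a common point, thereby destroying (strict) separability, and then show that this perturbation has magnitude at most $3\norm{\v_D}$. The starting point is the geometric reading of \eqref{prob:socp-esp-dual}, namely that solving the dual amounts to finding a point of $\conv(C_1\cup\cdots\cup C_j)\cap\conv(D_1\cup\cdots\cup D_l)$, and that the ellipsoids fail to be strictly separable exactly when this intersection is nonempty. The construction will manufacture such a point for the perturbed data.

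First I would extract two near-touching hull points from the minimizer $\bar{\x}\in\mC$ of Theorem~\ref{thm:pdhg-to-fista}, which satisfies $\b-\mA\bar{\x}=\v_D$. Writing the blocks of $\bar{\x}$ as $(\bar{\lambda}_i,\bar{\p}_i)$ and $(\bar{\mu}_i,\bar{\q}_i)$ and reading off the three blocks of the residual gives $\sum_i\bar{\lambda}_i=1-s^*$, $\sum_i\bar{\mu}_i=1-t^*$, and $Q-P=\w^*$, where $P:=\sum_i(\bar{\lambda}_i\c_i+A_i\bar{\p}_i)$ and $Q:=\sum_i(\bar{\mu}_i\d_i-B_i\bar{\q}_i)$. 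Since $\norm{\bar{\p}_i}\le\bar{\lambda}_i$ and $\norm{\bar{\q}_i}\le\bar{\mu}_i$, the points $P$ and $Q$ are conic combinations of points of the $C_i$ and $D_i$ with total weights $1-s^*$ and $1-t^*$, so in the nondegenerate regime $s^*,t^*<1$ the rescalings $\hat{P}:=P/(1-s^*)$ and $\hat{Q}:=Q/(1-t^*)$ are genuine points of $\conv(\cup_i C_i)$ and $\conv(\cup_i D_i)$.

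Next I would translate every center $\c_i$ by $\tfrac12(\hat{Q}-\hat{P})$ and every center $\d_i$ by $-\tfrac12(\hat{Q}-\hat{P})$. Because $\sum_i\bar{\lambda}_i/(1-s^*)=1$, this uniform shift sends $\hat{P}$ to $\tfrac12(\hat{P}+\hat{Q})$ and likewise sends $\hat{Q}$ to $\tfrac12(\hat{P}+\hat{Q})$, so the perturbed hulls share the common point $\tfrac12(\hat{P}+\hat{Q})$ and are no longer separable. Each center moves by exactly $\tfrac12\norm{\hat{Q}-\hat{P}}$, whence $\delta_{\mA}\le\tfrac12\norm{\hat{Q}-\hat{P}}$.

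It remains to bound $\norm{\hat{Q}-\hat{P}}$ by $6\norm{\v_D}$, and this is the step I expect to be the main obstacle. Using $Q=P+\w^*$ one writes $\hat{Q}-\hat{P}=\tfrac{t^*-s^*}{(1-s^*)(1-t^*)}P+\tfrac{1}{1-t^*}\w^*$; the normalization \eqref{eqn:normalization-assumption} gives $\norm{P}\le 2(1-s^*)$, while the identity $s^*+t^*=\norm{\v_D}^2$ from Lemma~\ref{lem:splust-equals-normvd2}, combined with $\norm{\v_D}^2=(s^*)^2+(t^*)^2+\norm{\w^*}^2$, pins $(s^*,t^*,\w^*)$ to the sphere $(s^*-\tfrac12)^2+(t^*-\tfrac12)^2+\norm{\w^*}^2=\tfrac12$. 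From this one reads off $\lvert s^*\rvert,\lvert t^*\rvert,\lvert t^*-s^*\rvert,\norm{\w^*}=\O(\norm{\v_D})$ and, when $\norm{\v_D}$ is small, lower bounds on $1-s^*$ and $1-t^*$, which together give $\tfrac12\norm{\hat{Q}-\hat{P}}\le 3\norm{\v_D}$. The delicate points are keeping the constant at $3$ while estimating the two terms, and disposing of the complementary regime — where $\norm{\v_D}$ exceeds an absolute constant or $s^*$ or $t^*$ equals $1$ — in which case the bound holds trivially, since \eqref{eqn:normalization-assumption} confines all data to a ball of radius $2$ and hence $\delta_{\mA}\le 2$ unconditionally.
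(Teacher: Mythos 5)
Your construction itself is sound, and it is genuinely different from the paper's: $\hat{P}$ and $\hat{Q}$ are indeed hull points when $s^*,t^*<1$, the half-shift of each class does force the common point $\tfrac12(\hat{P}+\hat{Q})$, and hence $\delta_{\mA}\le\tfrac12\lVert\hat{Q}-\hat{P}\rVert$. The problem is the endgame, and the hole is quantitative, not cosmetic. In the regime you must treat analytically, the sphere identity gives you only $|s^*|,|t^*|,\lVert\w^*\rVert\le\lVert\v_D\rVert=:v$ and $|t^*-s^*|\le\sqrt2\,v$, so your expression yields
\[
\tfrac12\lVert\hat{Q}-\hat{P}\rVert\;\le\;\frac{|t^*-s^*|+\tfrac12\lVert\w^*\rVert}{1-t^*}\;\le\;\frac{\bigl(\sqrt2+\tfrac12\bigr)v}{1-v},
\]
which is at most $3v$ only for $v\lesssim0.36$, whereas your fallback $\delta_{\mA}\le2$ only takes over once $3v\ge2$, i.e.\ $v\ge2/3$. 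The window $v\in(0.36,\,2/3)$ is covered by neither case, so the proof as sketched does not close. Two minimal repairs: (i) sharpen the trivial bound to $\delta_{\mA}\le1$ by moving $\c_1$ and $\d_1$ each to their midpoint $\tfrac12(\c_1+\d_1)$ (each displacement is $\tfrac12\lVert\c_1-\d_1\rVert\le1$ under \eqref{eqn:normalization-assumption}); then the threshold drops to $v\ge1/3$, and for $v<1/3$ your estimate gives $\tfrac32\bigl(\sqrt2+\tfrac12\bigr)v\approx2.87v\le3v$. Or (ii) use $s^*+t^*=v^2$ to get $\min(s^*,t^*)\le v^2/2$, and expand $\hat{Q}-\hat{P}$ around $P$ or around $Q$ according to which of $s^*,t^*$ is smaller, so that the denominator is $1-\min(s^*,t^*)\ge1-v^2/2\ge7/9$ for $v\le2/3$; this yields the constant $\tfrac97\bigl(\sqrt2+\tfrac12\bigr)\approx2.46$.

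It is worth seeing how the paper's proof sidesteps this entirely: it never divides by $1-s^*$ or $1-t^*$. Instead of renormalizing the deficient weights multiplicatively, it projects $\blambda^*$ and $\bmu^*$ onto the unit simplices, which gives the \emph{additive} control $\lVert\blambda^*-\hat\blambda\rVert_1\le|s^*|$ and $\lVert\bmu^*-\hat\bmu\rVert_1\le|t^*|$ (Lemma~\ref{lem:simplex-proj}) no matter how large $s^*,t^*$ are, and rescales the cone variables blockwise to preserve feasibility (Lemma~\ref{lem:pq-diff}). The resulting $\hat\x$ satisfies the two scalar constraints exactly, leaving a residual $\r$ in the vector constraint with $\lVert\r\rVert\le\lVert\w^*\rVert+2|s^*|+2|t^*|\le3\lVert\v_D\rVert$ by a single Cauchy--Schwarz step, and translating the $\c_i$ alone by $-\r$ makes $\hat\x$ exactly feasible for the perturbed instance. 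So the paper needs no nondegeneracy assumption, no case split on $\lVert\v_D\rVert$, and the constant $3$ falls out immediately; your route can be made to work, but only after patching the constant-chasing as above.
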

\begin{proof}
    See Section~\ref{app:esp-iters} of the appendix.
\end{proof}

\begin{corollary} \label{cor:vk-vd-data-bound}
  The residual $\v_k = \b - \mA\x_k = (s_k,t_k,\w_k)$ defines an approximate separator $\{\z : \w_k^T\z = m_k\}$, where $m_k = (-s_k + t_k)/2$, provided
  \[
    \lVert\v_k - \v_D\rVert < \frac{\delta_{\mA}^2}{27\sqrt{2}}.
  \]
\end{corollary}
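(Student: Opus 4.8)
The plan is to derive this corollary as an immediate consequence of chaining the two preceding results, Theorem~\ref{thm:vk-vd-approx-sep-bound} and Theorem~\ref{thm:slab-width}. The former gives a geometric sufficient condition for $\v_k$ to define an approximate separator, but expressed in terms of the (a priori unknown) quantity $\lVert\v_D\rVert$; the latter relates $\lVert\v_D\rVert$ to the data-dependent separation margin $\delta_{\mA}$. The whole task is to substitute the second into the first and verify that the constants collapse to the claimed form, taking care that the inequalities chain in the correct direction.

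First I would invoke Theorem~\ref{thm:slab-width}, which states $\delta_{\mA}\le 3\lVert\v_D\rVert$, and rearrange it to the lower bound $\lVert\v_D\rVert\ge \delta_{\mA}/3$. Squaring (both sides are nonnegative, so this preserves the inequality) yields $\lVert\v_D\rVert^2\ge \delta_{\mA}^2/9$. Dividing by $\sqrt{18}$ then gives
\[
  \frac{\lVert\v_D\rVert^2}{\sqrt{18}}\;\ge\;\frac{\delta_{\mA}^2}{9\sqrt{18}}\;=\;\frac{\delta_{\mA}^2}{27\sqrt{2}},
\]
where the final equality is the only arithmetic to check: since $\sqrt{18}=3\sqrt{2}$, we have $9\sqrt{18}=27\sqrt{2}$.

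With this in hand, the conclusion is purely logical. Theorem~\ref{thm:vk-vd-approx-sep-bound} certifies that $\v_k$ defines an approximate separator whenever $\lVert\v_k-\v_D\rVert<\lVert\v_D\rVert^2/\sqrt{18}$. The displayed inequality above shows that the hypothesized threshold $\delta_{\mA}^2/(27\sqrt{2})$ is \emph{no larger} than the genuine threshold $\lVert\v_D\rVert^2/\sqrt{18}$. Hence any $\v_k$ satisfying the stronger (smaller-threshold) condition $\lVert\v_k-\v_D\rVert<\delta_{\mA}^2/(27\sqrt{2})$ automatically satisfies the condition of Theorem~\ref{thm:vk-vd-approx-sep-bound}, and therefore defines an approximate separator. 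This completes the argument.

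There is essentially no obstacle here beyond bookkeeping: the content lives entirely in the two cited theorems, and the only place an error could creep in is the direction of the inequality—one must remember that replacing $\lVert\v_D\rVert^2/\sqrt{18}$ by the smaller $\delta_{\mA}^2/(27\sqrt{2})$ \emph{strengthens} the sufficient condition rather than weakening it, so the implication still runs the right way. The value of the corollary, of course, is practical rather than technical: it restates the stopping criterion purely in terms of the data-dependent margin $\delta_{\mA}$, removing any dependence on the unknown limiting residual $\v_D$.
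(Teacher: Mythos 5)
Your proof is correct and follows essentially the same route as the paper: chain Theorem~\ref{thm:slab-width}'s lower bound $\lVert\v_D\rVert \geq \delta_{\mA}/3$ into the sufficient condition of Theorem~\ref{thm:vk-vd-approx-sep-bound}, with the arithmetic $9\sqrt{18} = 27\sqrt{2}$. Your version is actually more careful than the paper's, since you explicitly verify that the implication runs in the right direction (the data-dependent threshold being the smaller one strengthens, rather than weakens, the hypothesis).
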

\begin{proof}
  We have shown in Theorem~\ref{thm:vk-vd-approx-sep-bound} that $\v_k$ is an approximate separator provided $\lVert\v_k - \v_D\rVert < \lVert\v_D\rVert^2/\sqrt{18}$.
  We have also shown in Theorem~\ref{thm:slab-width} that we can lower bound $\lVert\v_D\rVert$ by the minimum perturbation $\delta_{\mA}$ that makes the ESP no longer separable: $\lVert\v_D\rVert \geq \delta_{\mA}/3$.
  Hence, to ensure $\lVert\v_k - \v_D\rVert < \lVert\v_D\rVert^2/\sqrt{18}$, we require that
  \[
    \lVert\v_k - \v_D\rVert < \frac{\delta_{\mA}^2}{27\sqrt{2}}.
  \]
\end{proof}
\begin{theorem}
  FISTA applied to the reformulation of the ESP \eqref{prob:socp-esp-dual2} finds an approximate separating hyperplane in at most
  \[
    \frac{54\sqrt{2}\lVert\mA\rVert\cdot\lVert\x_0 - \bar{\x}\rVert}{\delta_{\mA}^2} -1
  \]
  iterations.
\end{theorem}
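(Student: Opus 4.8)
The plan is to convert the residual-accuracy threshold of Corollary~\ref{cor:vk-vd-data-bound} into an explicit iteration count by feeding the standard FISTA convergence rate through a strong-convexity argument. Since $\v_k = \b - \mA\x_k$ and, by Theorem~\ref{thm:pdhg-to-fista}(i), $\v_D = \b - \mA\bar{\x}$ for an optimizer $\bar{\x}$ of \eqref{prob:socp-esp-dual2}, the quantity $\lVert\v_k - \v_D\rVert$ is controlled as soon as we control how close $f(\x_k) = \frac{1}{2}\lVert\v_k\rVert^2$ is to the optimal value $f(\bar{\x}) = \frac{1}{2}\lVert\v_D\rVert^2$. So the first step is to establish
\[
  \lVert\v_k - \v_D\rVert^2 \leq 2\bigl(f(\x_k) - f(\bar{\x})\bigr).
\]

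I would prove this inequality as follows. Write $g(\v) := \frac{1}{2}\lVert\v\rVert^2$ and note that $\v_D$ minimizes $g$ over the convex residual set $R := \{\b - \mA\x : \x \in \mC\}$ by Theorem~\ref{thm:pdhg-to-fista}(ii). The first-order optimality (variational inequality) condition, using $\nabla g(\v_D) = \v_D$, then gives $\langle\v_D,\, \v_k - \v_D\rangle \geq 0$ for the feasible residual $\v_k \in R$. Expanding the quadratic exactly,
\[
  g(\v_k) = g(\v_D) + \langle\v_D,\, \v_k - \v_D\rangle + \tfrac{1}{2}\lVert\v_k - \v_D\rVert^2 \geq g(\v_D) + \tfrac{1}{2}\lVert\v_k - \v_D\rVert^2,
\]
which rearranges to the claimed bound.

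Next I would invoke the standard FISTA rate. Since $f(\x) = \frac{1}{2}\lVert\mA\x - \b\rVert^2$ has $\nabla f(\x) = \mA^T(\mA\x - \b)$ with Lipschitz constant $L = \lVert\mA\rVert^2$, the Beck--Teboulle guarantee \cite{FISTA} yields $f(\x_k) - f(\bar{\x}) \leq 2\lVert\mA\rVert^2\lVert\x_0 - \bar{\x}\rVert^2/(k+1)^2$. Combining this with the first step and taking square roots gives
\[
  \lVert\v_k - \v_D\rVert \leq \frac{2\lVert\mA\rVert\,\lVert\x_0 - \bar{\x}\rVert}{k+1}.
\]

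Finally, I would impose the threshold of Corollary~\ref{cor:vk-vd-data-bound}: requiring the right-hand side above to be strictly below $\delta_{\mA}^2/(27\sqrt{2})$ forces
\[
  k + 1 > \frac{54\sqrt{2}\,\lVert\mA\rVert\,\lVert\x_0 - \bar{\x}\rVert}{\delta_{\mA}^2},
\]
which is exactly the stated bound after subtracting $1$. The only genuinely delicate point is the first step: one must recognize that $1$-strong convexity of the squared-norm objective, together with the optimality of $\v_D$ over the convex feasible residual set, upgrades FISTA's function-value convergence into convergence of the residual vectors themselves---a stronger conclusion than the bare $\O(1/k^2)$ rate supplies in general. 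Everything after that is a routine substitution and inversion for $k$.
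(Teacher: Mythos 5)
Your proposal is correct, and its overall skeleton matches the paper's proof exactly: bound $\lVert\v_k - \v_D\rVert$ by the function-value gap, feed in the standard FISTA rate with $L = \lVert\mA\rVert^2$, and invert against the threshold of Corollary~\ref{cor:vk-vd-data-bound}. The one place where you diverge is the "delicate" first step. The paper isolates this as Theorem~\ref{thm:vk-vd-function-value} in the appendix and proves it by combining an algebraic identity (Lemma~\ref{lem:identity}) with Lemma~\ref{lem:JMV2023-1}, a fact imported from the PDHG analysis of Jiang et al.\ stating $\v_D^T(\v_D - (\b - \z)) \leq 0$ for $\z \in \mA(\mC)$. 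You instead derive that same inequality from first principles: since Theorem~\ref{thm:pdhg-to-fista}(i)--(ii) show $\v_D$ lies in and minimizes $\frac{1}{2}\lVert\cdot\rVert^2$ over the convex residual set $R = \{\b - \mA\x : \x \in \mC\}$, the first-order optimality condition gives $\langle\v_D, \v_k - \v_D\rangle \geq 0$, and the exact quadratic expansion does the rest. The two arguments are mathematically equivalent --- the cited PDHG lemma \emph{is} your variational inequality, and the paper's Lemma~\ref{lem:identity} \emph{is} your quadratic expansion --- but your version is more self-contained, trading the external citation for a dependence on Theorem~\ref{thm:pdhg-to-fista}(ii), which the paper proves anyway. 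One cosmetic point: you should be careful about strict versus non-strict inequality when passing from Corollary~\ref{cor:vk-vd-data-bound} (which requires $\lVert\v_k - \v_D\rVert < \delta_{\mA}^2/(27\sqrt{2})$) to the iteration count, though the paper's own proof elides the same distinction.
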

\begin{proof}
  From Corollary~\ref{cor:vk-vd-data-bound}, we know that $\v_k = (s_k,t_k,\w_k)$ encodes a separating hyperplane when $\lVert\v_k - \v_D\rVert < \delta_{\mA}^2/27\sqrt{2}$. We want to know
  when $\v_k$ lies in a ball of radius $\delta_{\mA}^2/27\sqrt{2}$ around $\v_D$. Recall (cf. \cite[Theorem 10.34]{Beck2017}) that for iterates
  $\x_k$ generated by FISTA and any $\bar{\x} \in \mS = \{\x \in \mC : \b - \mA\x = \v_D\}$,
  \[
    f(\x_k) - f(\bar{\x}) \leq \frac{2L\lVert\x_0 - \bar{\x}\rVert^2}{(k+1)^2},
  \]
  where $\x_0$ is the starting point of the algorithm and $L$ is the Lipschitz parameter of the gradient of $f$. One can verify that
  $L = \lVert\mA\rVert^2$, so we obtain
  \[
    f(\x_k) - f(\bar{\x}) \leq \frac{2\lVert\mA\rVert^2\lVert\x_0 - \bar{\x}\rVert^2}{(k+1)^2}.
  \]
  From Theorem~\ref{thm:vk-vd-function-value}, we have $\lVert\v_k - \v_D\rVert \leq \sqrt{2}\sqrt{f(\x_k) - f(\bar{\x})}$, so applying this to the above we obtain the bound
  \[
    \lVert\v_k - \v_D\rVert \leq \frac{2\lVert\mA\rVert\lVert\x_0 - \bar{\x}\rVert}{k+1}.
  \]
  It follows that we can guarantee that $\v_k$ lies in a ball of radius $\delta_{\mA}^2/27\sqrt{2}$ around $\v_D$ when
  $2\lVert\mA\rVert\lVert\x_0 - \bar{\x}\rVert/(k+1) \leq \delta_{\mA}^2/27\sqrt{2}$. That is, when
  \begin{equation} \label{eqn:k-esp}
    k \geq \frac{54\sqrt{2}\lVert\mA\rVert\lVert\x_0 - \bar{\x}\rVert}{\delta_{\mA}^2} -1.
  \end{equation}
\end{proof}
The termination test \eqref{eqn:esp-termination-test} used in the numerical experiments for the ESP efficiently checks whether the algorithm has found a separator, ensuring termination when \eqref{eqn:k-esp} is satisfied.

\section{Support Vector Machines} \label{sec:svm}
The ellipsoid separation problem assumes the data are separable. We now suppose the data are not separable, but we still
look for a hyperplane that separates the points well. This problem can be modeled as a soft-margin support vector machine (SVM).

In this section we assume the data are points rather than ellipsoids. Let the points be $\c_1,\hdots,\c_j,\d_1,\hdots,\d_l \in \R^d$.
Let the (a priori unknown) separating hyperplane be $(\w,t)$, i.e. we want $\w^T\c_i < -t$ for $i = 1\hdots j$ and
$\w^T\d_i > - t$ for $i = 1,\hdots,l$. The soft-margin SVM is modeled as a convex quadratic programming problem:
\begin{equation} \label{prob:sm-svm-primal}
  \begin{array}{rll}
  \min_{\s,t,\w}& \frac{1}{2}\Vert\w\Vert^2+\gamma\sum_{i=1}^{j+l}s_i \\
  \mbox{s.t.} & \c_i^T\w+t\ge 1-s_i, &
  i=1,\ldots, j, \\
  &\d_i^T\w+t\le -1 + s_{i+j}, &
  i=1,\ldots,l, \\
  &\s\ge\bz,
  \end{array}
\end{equation}
where we say a point $\c_i$ or $\d_i$ is misclassified if its corresponding entry of $s$ is positive, and $\gamma > 0$ is a hyperparameter that penalizes misclassification.
Observe that \eqref{prob:sm-svm-primal} is bounded below by zero and always feasible, since one may always choose $\s$ to ensure the constraints
are satisfied. In fact, we can rewrite \eqref{prob:sm-svm-primal} as unconstrained nonsmooth convex optimization by letting
$\phi(\theta) := \gamma\max(1-\theta,0)$ and writing
\begin{equation}
  \min_{\w,t} \frac{1}{2}\Vert\w\Vert^2+\sum_{i=1}^{j}\phi(\c_i^T\w+t)+\sum_{i=1}^l\phi(-\d_i^T\w-t) =: f(\w,t).
\end{equation}
Unlike in the ESP, there is no question of whether a separating hyperplane exists, but finding the optimal
separating hyperplane requires convergence to $(\w^*,t^*)$. To develop the theory for only an approximate solution,
we need to impose a data model. We return to this point in Section~\ref{sec:svm-assumptions}.

\subsection{FISTA for support vector machines} \label{sec:svm-fista}
The soft-margin SVM problem \eqref{prob:sm-svm-primal} is a sum of a convex smooth term and several convex nonsmooth terms. Unfortunately, there is no closed-form prox of the nonsmooth terms.
We get around this problem by again passing to the dual, which for \eqref{prob:sm-svm-primal} is the following:
\begin{equation} \label{prob:sm-svm-dual}
  \begin{array}{rll}
    \max_{\u,\v} & -\frac{1}{2}\left\Vert\sum_{i=1}^ju_i\c_i-\sum_{i=1}^lv_i\d_i\right\Vert^2 + \e^T\u+\e^T\v \\
    \mbox{s.t.} & \e^T\u=\e^T\v, \\
    & \bz\le\u\le\gamma\e,\\
    & \bz\le\v\le\gamma\e.
  \end{array}
\end{equation}
Let $\Omega:=\{(\u,\v):\e^T\u=\e^T\v,\>\bz\le\u\le\gamma\e,\>\bz\le\v\le\gamma\e\}$.
The projection $\proj_{\Omega}(\x,\y)$ is the solution to the following optimization problem:
\begin{equation} \label{prob:proj-omega-1}
  \begin{array}{rll}
    \min_{\u,\v} & \frac{1}{2}\lVert\u-\x\rVert^2 + \frac{1}{2}\lVert\v-\y\rVert^2\\
    \mbox{s.t.} & \e^T\u = \e^T\v,\\
                & \bz\le\u\le\gamma\e,\\
                & \bz\le\v\le\gamma\e.
  \end{array}
\end{equation}
After dropping constants, \eqref{prob:proj-omega-1} can be rewritten in the form of a continuous quadratic knapsack problem:
\begin{equation} \label{prob:proj-omega-qk}
  \begin{array}{rll}
    \min_{\r} & \frac{1}{2}\r^TI\r - \q^T\r\\
    \mbox{s.t.} & \e^T\r = \bz,\\
                & \bell\le\r\le\bnu,
  \end{array}
\end{equation}
where $\r := (\u, -\v)$, $\q := (\x,-\y)$, $\bell := (\bz,-\gamma\e)$, and $\bnu := (\gamma\e,\bz)$. An algorithm for exactly solving
\eqref{prob:proj-omega-qk} in $\O(j+l)$ operations was introduced in \cite{Brucker1984}, yielding a linear time method for
projecting onto $\Omega$.

\subsubsection{Difficulties with identifying well-classified points} \label{sec:svm-example}
Since we have a linear time algorithm to compute $\proj_{\Omega}(\x,\y)$, FISTA can be applied efficiently to \eqref{prob:sm-svm-dual}.
In fact, the complementarity relation of the KKT (Karush-Kuhn-Tucker) conditions says that an optimal primal-dual pair
for \eqref{prob:sm-svm-primal} and \eqref{prob:sm-svm-dual} must satisfy $s_i(\gamma - u_i) = 0$ for all $i \in [j]$ and
$s_{j+i}(\gamma - v_i) = 0$ for all $i \in [l]$. Thus, if we have an optimal dual solution with $u_i < \gamma$, then $s_i = 0$ and $\c_i$ is
correctly classified. The same applies for $\v$. A preliminary idea for identifying well-classified points is then to run FISTA on
the dual of the soft-margin SVM problem \eqref{prob:sm-svm-dual} until the indices $i$ such that $u_i < \gamma$ or $v_i < \gamma$ at the optimum
are identified.

The difficulty with this approach is that the dual problem may have multiple minimizers. Furthermore, in some cases there exist minimizers where a well-classified point $\c_i$ or $\d_i$ satisfies $u_i = \gamma$ or $v_i = \gamma$. For an example of such a case, see Section~\ref{app:multiple-min-example} in the Appendix. The convergence of the iterates of FISTA was recently proved in \cite{Bot2025}; however, for convex problems with multiple minimizers, these results do not identify which minimizer FISTA converges to.

\subsection{Solving the perturbed dual} \label{sec:svm-p-dual}
To get around the problem of \eqref{prob:sm-svm-dual} having multiple minimizers, our approach is to make the dual \eqref{prob:sm-svm-dual}
$\mu$-strongly concave via a perturbation
to the objective:
\begin{equation} \label{prob:sm-svm-dual-p}
  \begin{array}{rrl}
    &\max&g(\u,\v):=-\frac{1}{2}\Vert C^T\u-D^T\v\Vert^2 + \e^T\u+\e^T\v - \frac{\mu}{2}\Vert\u\Vert^2-\frac{\mu}{2}\Vert\v\Vert^2\\
    &\mbox{s.t.}&(\u,\v)\in \Omega,
  \end{array}
\end{equation}
where $C = (\c_1^T,\hdots,\c_j^T) \in \R^{j\times d}$ and $D = (\d_1^T,\hdots,\d_l^T) \in \R^{l \times d}$.
For the remainder of the section, we will focus on solving the formulation \eqref{prob:sm-svm-dual-p}.

\begin{theorem} \label{thm:L-smooth-g}
  Suppose all points lie within a fixed distance $R$ from the origin. The objective function $g(\u,\v)$ defined in \eqref{prob:sm-svm-dual-p} is $L$-smooth, where $L = \mu + nR^2$.
\end{theorem}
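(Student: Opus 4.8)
The plan is to exploit the fact that $g$ is a concave quadratic, so that verifying $L$-smoothness reduces to a single spectral-norm bound on its constant Hessian, with no need to estimate gradient differences by hand. Concretely, I would first collect the quadratic part of $g$ in matrix form: stacking the dual variables as $\z = (\u,\v)\in\R^{n}$ with $n = j+l$ and setting $M := (C^T \mid -D^T)\in\R^{d\times n}$, the coupling term becomes $\norm{C^T\u - D^T\v}^2 = \z^T M^T M \z$, so that
\[
  g(\z) = -\tfrac12\,\z^T\!\left(M^T M + \mu I\right)\z + \b^T\z
\]
for a constant vector $\b$ absorbing the linear terms $\e^T\u + \e^T\v$. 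Hence the Hessian is the constant matrix $\nabla^2 g = -(M^T M + \mu I)$, and $Q := M^T M + \mu I \succeq \bz$.

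Second, because $g$ is quadratic with constant Hessian, the gradient difference satisfies $\norm{\nabla g(\z_1) - \nabla g(\z_2)} = \norm{Q(\z_1-\z_2)} \le \norm{Q}\,\norm{\z_1-\z_2}$, so $g$ is $L$-smooth with $L = \norm{Q} = \lambda_{\max}(M^T M) + \mu = \norm{M}^2 + \mu$. This step is immediate once the Hessian is identified: no separate Lipschitz verification is needed, since the Hessian does not vary with $\z$.

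The only quantitative work is the bound on $\norm{M}^2$. Using $\norm{M}^2 = \norm{M M^T}$ together with $M M^T = C^T C + D^T D = \sum_{i=1}^j \c_i\c_i^T + \sum_{i=1}^l \d_i\d_i^T$, the triangle inequality for the spectral norm gives
\[
  \norm{M}^2 \le \sum_{i=1}^j \norm{\c_i}^2 + \sum_{i=1}^l \norm{\d_i}^2 \le n R^2,
\]
where the final inequality uses $\norm{\c_i},\norm{\d_i}\le R$. Combining with the previous step yields $L = \mu + \norm{M}^2 \le \mu + nR^2$, as claimed.

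I expect the last step to be the only place demanding care: a crude estimate of $\norm{M}$ need not produce the clean constant $nR^2$, whereas recognizing $M M^T$ as a sum of $n$ rank-one positive semidefinite matrices $\c_i\c_i^T$ and $\d_i\d_i^T$, each of spectral norm at most $R^2$, makes the triangle inequality deliver the stated bound exactly. Equivalently, one could bound $\norm{M M^T} \le \operatorname{tr}(M M^T) = \sum_{i=1}^j \norm{\c_i}^2 + \sum_{i=1}^l \norm{\d_i}^2$, which gives the same conclusion.
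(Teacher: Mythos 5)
Your proposal is correct and follows essentially the same route as the paper: both identify the constant Hessian $-(M^TM+\mu I)$ (the paper writes it as $-HH^T-\mu I$ with $H = M^T$) and then bound its spectral norm by $nR^2+\mu$ using the fact that each data point has norm at most $R$. Your rank-one-sum/trace estimate for $\lVert MM^T\rVert$ is just a restatement of the paper's Frobenius-norm (Cauchy--Schwarz) bound on $\lVert H\rVert_2^2$, so the two arguments coincide in substance.
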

\begin{proof}
  The gradient of $g$ with respect to $u$ is $-C\left(C^T\u - D^T\v\right) + \e - \mu\u$, while the gradient of $g$ with respect to $\v$ is
  $D\left(C^T\u - D^T\v\right) + \e - \mu\v$. By letting
  \[
    \z := \begin{pmatrix}
      \u & \v
    \end{pmatrix}\quad\text{and}\quad
    H^T := \begin{pmatrix}
      C^T & -D^T
    \end{pmatrix},
  \]
  we may write
  \[
    \nabla g(\z) = -HH^T\z + \e - \mu\z,
  \]
  and easily obtain the Hessian:
  \[
    \nabla^2 g(\z) = -HH^T - \mu I.
  \]
  It follows that
  \begin{align*}
    \lVert\nabla^2 g(\z)\rVert_2 &= \lVert HH^T + \mu I\rVert_2\\
                                 &= \lVert H\rVert^2_2 + \mu\\
                                 &\leq nR^2 + \mu,
  \end{align*}
  where in the final inequality we used Cauchy-Schwarz and that the norm of each row of $H$ is bounded by $R$, under the assumption
  that all data lie within a fixed distance from the origin.
\end{proof}

The next theorem gives a bound that allows us to monitor proximity to the optimizer $(\u^*,\v^*)$ of the perturbed dual \eqref{prob:sm-svm-dual-p}
in terms of the projected gradient. Since the projected gradient is computed on each FISTA iteration, this theorem implies the projected gradient step
is a low cost measure of convergence.

\begin{theorem} \label{thm:dist-to-opt-dual}
  Let $\r := (\u,\v) \in \R^j\times\R^l$ and let
  \[
    (\u^+,\v^+) = \r^+ := \proj_{\Omega}\left(\r + \tfrac{1}{L}\nabla g(\r)\right).
  \]
  Then,
  \[
    \lVert\r^+-\r^*\rVert \leq \frac{2L}{\mu}\lVert \r-\r^+\rVert,
  \]
  where $\r^* = (\u^*,\v^*)$ is the optimal solution of \eqref{prob:sm-svm-dual-p}.
\end{theorem}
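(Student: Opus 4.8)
The plan is to recognize $\r^+$ as a single projected-gradient step for the \emph{minimization} problem $\min_{(\u,\v)\in\Omega}F(\r)$, where $F:=-g$, and to exploit that the projected-gradient operator is a contraction toward the unique minimizer $\r^*$. Since $g$ is $\mu$-strongly concave and $L$-smooth (the latter by Theorem~\ref{thm:L-smooth-g}, where $L=\mu+nR^2\ge\mu$), $F$ is $\mu$-strongly convex and $L$-smooth with $\mu\le L$. Writing the gradient-step map as $\psi(\x):=\x-\tfrac1L\nabla F(\x)=\x+\tfrac1L\nabla g(\x)$, the update becomes $\r^+=\proj_{\Omega}(\psi(\r))$, and the first-order optimality condition for this strongly convex problem identifies $\r^*$ as the unique fixed point $\r^*=\proj_{\Omega}(\psi(\r^*))$.

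First I would bound $\lVert\r^+-\r^*\rVert$ by a contraction estimate. Because $\proj_{\Omega}$ is nonexpansive (as $\Omega$ is a convex set), we have $\lVert\r^+-\r^*\rVert\le\lVert\psi(\r)-\psi(\r^*)\rVert$. Expanding the right-hand side as
\[
\lVert\psi(\r)-\psi(\r^*)\rVert^2=\lVert\r-\r^*\rVert^2-\tfrac2L\langle\nabla F(\r)-\nabla F(\r^*),\r-\r^*\rangle+\tfrac1{L^2}\lVert\nabla F(\r)-\nabla F(\r^*)\rVert^2,
\]
I would apply co-coercivity of $\nabla F$ (from $L$-smoothness and convexity) to absorb the last term into the inner-product term, and then $\mu$-strong monotonicity of $\nabla F$ on the remainder, obtaining
\[
\lVert\psi(\r)-\psi(\r^*)\rVert^2\le\Bigl(1-\tfrac{\mu}{L}\Bigr)\lVert\r-\r^*\rVert^2,
\]
so that $\lVert\r^+-\r^*\rVert\le\sqrt{1-\mu/L}\,\lVert\r-\r^*\rVert$.

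Next I would convert this contraction into the stated bound via the triangle inequality. From $\lVert\r-\r^*\rVert\le\lVert\r-\r^+\rVert+\lVert\r^+-\r^*\rVert$ and the contraction estimate, we get $\bigl(1-\sqrt{1-\mu/L}\bigr)\lVert\r-\r^*\rVert\le\lVert\r-\r^+\rVert$. The elementary inequality $1-\sqrt{1-x}\ge x/2$ for $x\in[0,1]$, applied with $x=\mu/L$, then yields $\lVert\r-\r^*\rVert\le\tfrac{2L}{\mu}\lVert\r-\r^+\rVert$. Since the contraction already gives $\lVert\r^+-\r^*\rVert\le\lVert\r-\r^*\rVert$, we conclude $\lVert\r^+-\r^*\rVert\le\tfrac{2L}{\mu}\lVert\r-\r^+\rVert$, as claimed.

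The step I expect to be the main obstacle is establishing the contraction factor $\sqrt{1-\mu/L}$: one must use co-coercivity $\tfrac1L\lVert\nabla F(\r)-\nabla F(\r^*)\rVert^2\le\langle\nabla F(\r)-\nabla F(\r^*),\r-\r^*\rangle$ (a consequence of $L$-smoothness plus convexity via the Baillon--Haddad theorem) to cancel the squared-gradient term against half of the middle term, and then strong monotonicity $\langle\nabla F(\r)-\nabla F(\r^*),\r-\r^*\rangle\ge\mu\lVert\r-\r^*\rVert^2$ on what remains. Getting the directions of these two inequalities correct is the delicate part; once the contraction is in hand, the remainder is the triangle inequality together with a scalar estimate.
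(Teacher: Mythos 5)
Your proof is correct, but it follows a genuinely different route from the paper's. The paper argues via approximate stationarity at $\r^+$: it invokes a lemma (Lemma~\ref{lem:dist-to-N}) showing that $\nabla g(\r^+)$ lies within distance $2L\lVert\r-\r^+\rVert$ of the normal cone $\mN_{\Omega}(\r^+)$, then combines strong monotonicity of $-\nabla g$ with the two normal-cone inequalities $(\r^+-\r^*)^T\z^+\ge 0$ and $(\r^+-\r^*)^T\z^*\le 0$ to get $\mu\lVert\r^+-\r^*\rVert^2\le 2L\lVert\r-\r^+\rVert\cdot\lVert\r^+-\r^*\rVert$ directly. You instead use the classical contraction property of the projected-gradient operator for a $\mu$-strongly convex, $L$-smooth objective (nonexpansiveness of $\proj_\Omega$, Baillon--Haddad co-coercivity, and strong monotonicity give the factor $\sqrt{1-\mu/L}$), identify $\r^*$ as the unique fixed point, and finish with the triangle inequality and the scalar bound $1-\sqrt{1-x}\ge x/2$ on $[0,1]$; all steps check out, including the fact that nothing requires $\r\in\Omega$, and you land on exactly the same constant $2L/\mu$. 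The paper's argument is self-contained modulo its appendix lemma and extends naturally to quantifying near-stationarity of any iterate; yours is more modular given textbook results and yields slightly more along the way, namely the explicit contraction $\lVert\r^+-\r^*\rVert\le\sqrt{1-\mu/L}\,\lVert\r-\r^*\rVert$ and a bound on $\lVert\r-\r^*\rVert$ itself, not only on $\lVert\r^+-\r^*\rVert$.
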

\begin{proof}
  Since $\r^*$ is optimal, there must exist some $\z^* \in \mN_{\Omega}(\r^*)$ with $\z^* = \nabla g(\r^*)$.
  Also, from Lemma~\ref{lem:dist-to-N}, there exists some $\z^+ \in \mN_{\Omega}(\r^+)$ such that $\lVert \nabla g(\r^{+}) - \z^+\rVert \leq 2L\lVert \r - \r^+\rVert$.
  By $\mu$-strong convexity of $-g$,
  \[
    -\left(\nabla g(\r^+) - \nabla g(\r^{*})\right)^T(\r^+ - \r^{*} ) \geq \mu\lVert \r^+ - \r^{*}\rVert^2,
  \]
  while by the definition of the normal cone, $(\r^+ - \r^{*})^T\z^+ \geq 0$ and $(\r^+ - \r^{*})^T\z^{*} \leq 0$.
  We may write $\nabla g(\r^{*}) = \z^+ + (\z^{*} - \z^+)$, which gives
  \begin{align*}
    -\left(\nabla g(\r^+) - \nabla g(\r^{*})\right)^T(\r^+ - \r^{*}) &= -\left(\nabla g(\r^+) - \z^+\right)^T(\r^+ - \r^{*}) + (\z^{*} - \z^+)^T(\r^+ - \r^{*})\\
    &\leq \left(\nabla g(\r^+) - \z^+\right)^T(\r^{*} - \r^+)\\
    &\leq \lVert\nabla g(\r^+) - \z^+\rVert\cdot\lVert \r^+ - \r^{*}\rVert\\
    &\leq 2L\lVert \r-\r^+\rVert\cdot\lVert \r^+ - \r^{*}\rVert.
  \end{align*}
  Combining the inequality obtained by strong concavity, and the above, we get
  \[
    \mu\lVert \r^+ - \r^{*}\rVert^2 \leq 2L\lVert \r - \r^+\rVert\cdot\lVert \r^+ - \r^{*}\rVert,
  \]
  which under the assumption that $\r^+ \neq \r^*$, gives
  \[
    \lVert\r^+ - \r^*\rVert \leq \frac{2L}{\mu}\lVert \r - \r^+\rVert.
  \]
\end{proof}

In order to connect the perturbed dual back to the problem of finding a separating hyperplane, we take the dual of \eqref{prob:sm-svm-dual-p}:
\begin{equation} \label{prob:dpd}
  \begin{array}{rrl}
    &\min_{s,t,\w,\z} & \frac{1}{2}\Vert\w\Vert^2 + \gamma\e^T\s+\frac{1}{2\mu}\Vert\z\Vert^2\\
    &\mbox{s.t.}& C\w+t\e+\z_1\ge \e-\s_1,\\
    &&D\w+t\e -\z_2\le -\e+\s_2,\\
    &&\s \geq \bz,
  \end{array}
\end{equation}
where $\s_1 = \s(1:j), \s_2 = \s(j+1:j+l)$ and $\z_1 = \z(1:j), \z_2 = \z(j+1:j+l)$. The KKT conditions relating \eqref{prob:sm-svm-dual-p} and \eqref{prob:dpd} are
\begin{align}
  \begin{split} \label{eqn:svm-kkt}
    &\u = \z_1/\mu\\
    &\v = \z_2/\mu\\
    &\w = C^T\u - D^T\v\\
    &\e^T\u = \e^T\v\\
    &\z_1 = (1-t)\e - \s_1 - C\w + \bzeta_1\\
    &\z_2 = (1+t)\e - \s_2 + D\w + \bzeta_2\\
    &\s_1^T(\u - \gamma\e) = 0\\
    &\s_2^T(\v - \gamma\e) = 0\\
    &\bzeta_1^T\u = 0\\
    &\bzeta_2^T\v = 0\\
    &\bz \leq \u \leq \gamma\e\\
    &\bz \leq \v \leq \gamma\e\\
    &\s \geq \bz\\
    &\bzeta \geq \bz,
  \end{split}
\end{align}
where $\bzeta_1 = \bzeta(1:j)$ and $\bzeta_2 = \bzeta(j+1:j+l)$.
If we suppose $\w,t$ are known in \eqref{prob:dpd}, then the entries
of $\s$ and $\z$ separate. That is, if for some $i = 1,\hdots,j$ we let $\theta = \c_i^T\w + t$ or for some $i=j+1,\hdots,l$, $\theta = -\d_i^T\w - t$, then the optimal $s_i,z_i$ solve the problem
\begin{equation} \label{eqn:psi-min}
  \psi(\theta) := \min\left\{ \gamma s + \frac{1}{2\mu}z^2 : \theta \geq 1 - s - z,\ s \geq 0\right\},
\end{equation}
which can be written in closed form as the following differentiable approximation to hinge loss:
\begin{equation} \label{eqn:psi-cc}
  \psi(\theta) = \begin{cases}
    -\frac{1}{2}\mu\gamma^2 + \gamma(1-\theta), & \theta \in (-\infty, 1 - \mu\gamma],\\
    \frac{1}{2\mu}(1-\theta)^2, & \theta \in [1-\mu\gamma,1],\\
    0, & \theta \in [1,+\infty).
  \end{cases}
\end{equation}
In the context of \eqref{prob:dpd}, $\theta$ can be viewed as the signed margin of a point $\p_i$ with corresponding class label $y_i$,
where $y_i = 1$ and $\p_i = \c_i$ for $i=1,\hdots,j$, and $y_i = -1$ and $\p_i = \d_i$ for $i=j+1,\hdots,l$:
\[
  \theta = y_i(\w^T\p_i + t).
\]

With a closed-form solution to $\psi(\theta)$, we may now write \eqref{prob:dpd} as an unconstrained convex optimization problem
which will be helpful for our analysis.
\begin{equation} \label{eqn:dpd}
  \min_{\w,t} f(\w,t):= \frac{1}{2}\Vert\w\Vert^2+\sum_{i=1}^j\psi(\c_i^T\w+t)+\sum_{i=1}^l\psi(-\d_i^T\w-t).
\end{equation}

\subsection{Assumptions on the data} \label{sec:svm-assumptions}
We will assume throughout the rest of this section that the data form two clusters, each with a center, plus noisy points.
Let $n := j + l$, and suppose $\sigma_1,\sigma_2 \in [1/2,1]$ and $0 < \rho < \min(\sigma_1,\sigma_2)$. We assume that there are
$j_0$ ``well-classified" points in $B(\sigma_1\e_1,\rho)$, there are $l_0$ ``well-classified" points in $B(-\sigma_2\e_1,\rho)$,
and the $n_{\text{noise}}$ noisy points are contained in the ball that encloses the two balls of \textit{well-classified} points:
\[
  B\left(\frac{\sigma_1 - \sigma_2}{2}\e_1, \frac{\sigma_1 + \sigma_2}{2} + \rho\right).
\]
\begin{figure}[h]
  \centering
  \begin{tikzpicture}[scale=1.5]
    \def\r{0.6} 
    \def\R{1+\r} 

    \draw[<->,very thin] (-2.5,0) -- (2.5,0);
    \draw[<->,very thin] (0,-2.2) -- (0,2.2);

    \filldraw[fill=gray!15,draw=black,thick] (-1,0) circle (\r);
    \filldraw[fill=gray!15,draw=black,thick] ( 1,0) circle (\r);

    \draw[black,dashed] (0,0) circle (\R);

    \fill (-1,0) circle (1pt);
    \fill ( 1,0) circle (1pt);
    \fill ( 0,0) circle (1pt);
  \end{tikzpicture}
  \caption{An illustration of the assumption on the data, where we have two balls of well-classified points centered at $-1$ and $+1$, and a larger ball containing the two, which also includes the noisy points.}
  \label{fig:ball-assumption}
\end{figure}
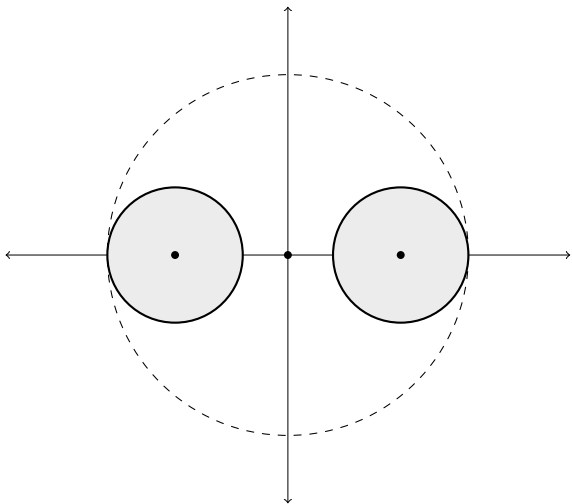

In our model we have $j_0 + l_0 + n_{\text{noise}} = n$, where
$j_0 \geq n/3$, $l_0 \geq n/3$, and $n_{\text{noise}} \leq \nu n$ with $\nu > 0$. This assumption also implies that all points must lie within
a fixed distance $R := \max(\sigma_1,\sigma_2) + \rho$ from the origin.

Now we have a data model, we can consider two useful questions:
\begin{enumerate}
  \item Can well-classified points be identified with an approximate solution?
  \item Can a (suboptimal) separator be identified from an approximate solution that separates the well-classified points?
\end{enumerate}

\subsection{Algorithmic parameters} \label{sec:svm-params}
In order to solve \eqref{prob:sm-svm-dual-p}, we must select values for the misclassification penalty $\gamma$ and the strong-concavity parameter $\mu$.
We fix $\gamma := 64/n$ and $\mu := n/128$, and observe that this makes $1-\mu\gamma = 1/2$, ensuring that the breakpoints of the quadratic portion of
$\psi(\theta)$ are $\theta \in [1/2,1]$. Hence, if the signed margin $\theta \geq 1$, the corresponding point makes no contribution to the objective, but as the margin decreases to $\theta \in [1/2,1]$, the contribution begins to increase quadratically, before becoming linear when $\theta \leq 1/2$.
This behavior is depicted in Figure~\ref{fig:psi}.
Our choice of $\gamma$ follows the common convention of normalizing the hinge-loss term by the sample size, which ensures that the objective function remains well-scaled as $n$ varies. Such normalization appears throughout the literature, for example in \cite[Equation (15.4)]{ShalevShwartz2014} and \cite[Equation (1)]{Pegasos}. Another benefit of this choice of parameters is very fast convergence when using the strongly convex variant of FISTA (cf. \cite[Chapter 7]{Beck2017}). Indeed 
the strongly convex variant of FISTA converges at the rate $\O((1-1/\sqrt{\kappa})^k)$, where $\kappa = L/\mu$, and
Theorem~\ref{thm:L-smooth-g} gives $L = \mu + nR^2$, so
\[
  \kappa = 1 + 128R^2 \leq 513,
\]
which is sufficiently small to ensure fast convergence of FISTA.

\begin{figure}[ht] 
    \centering
    \includegraphics[width=0.6\textwidth]{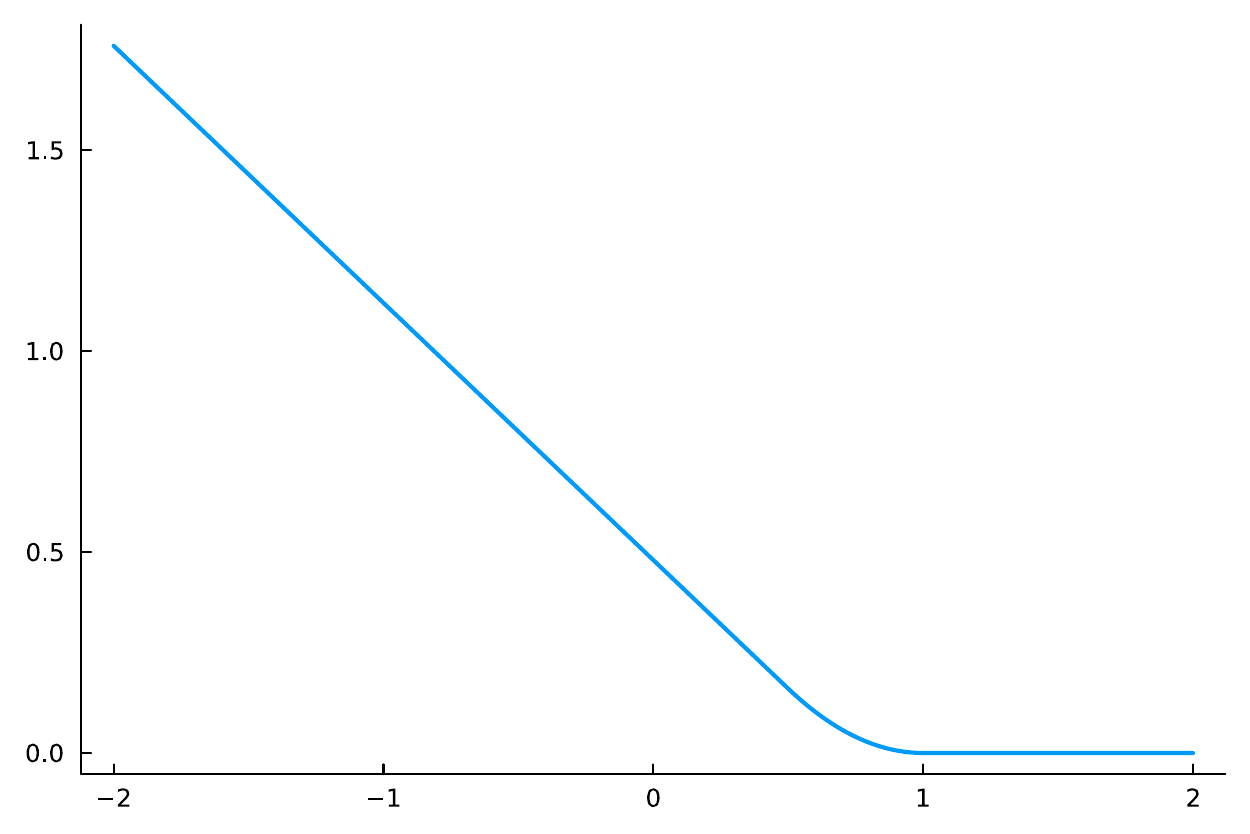}
    \caption{A plot of $\psi(\theta)$, where $n=100$, $\gamma = 64/n, \mu = n/128$.}
    \label{fig:psi}
\end{figure}

\subsection{Identifying well-classified points} \label{sec:svm-wc-points}
We say that a point is \textit{properly classified} if its optimal value for \eqref{prob:sm-svm-dual-p} satisfies $u_i^* < \gamma$ or $v_i^* < \gamma$. The goal of this section is to prove that all well classified points are properly classified by FISTA provided that the distance $\lVert(\u_k,\v_k) - (\u^*,\v^*)\rVert \leq \Delta$, where $\Delta$ is a data-dependent quantity defined in \eqref{eqn:Delta}. This result is stated precisely in Theorem~\ref{thm:svm-separation-theorem}.  The termination test in this theorem depends on parameters that may be unknown a priori when the algorithm is run.  In Section~\ref{sec:svm-numerics}, we explain how to terminate FISTA in practice even when the parameters are unknown.

As in the unperturbed case, the complementarity relation of the KKT conditions \eqref{eqn:svm-kkt} says that an optimal primal-dual pair
for \eqref{prob:sm-svm-dual-p} and \eqref{prob:dpd} must satisfy $s_i^*(\gamma - u_i^*) = 0$ for all $i \in [j]$ and
$s_{j+i}^*(\gamma - v^*_i) = 0$ for all $i \in [l]$. This implies that for a properly classified point, $s^*_i = 0$.

The derivative of $\psi(\theta)$ for our choices of $\mu,\gamma$ is
\begin{equation} \label{eqn:psi-prime}
  \psi'(\theta) = \begin{cases}
    -\frac{64}{n}, & \theta \in (-\infty,1/2),\\
    \frac{128}{n}(\theta - 1), & \theta \in [1/2,1],\\
    0, & \theta \in (1,+\infty),
  \end{cases}
\end{equation}
which we use in the following theorem that connects the separator obtained from the primal optimizers $(\w^*,t^*)$ to the optimal dual variables.
Here, we define $\bpsi_c'(\w,t) := \sum_{i=1}^j \psi'(\c_i^T\w + t)\e_i$ and $\bpsi_d'(\w,t) := \sum_{i=1}^l \psi'(-\d_i^T\w - t)\e_i$, i.e. $\bpsi_c'(\w,t)$
is the vector with entries $\psi'(\c_i^T\w + t)$.
\begin{theorem}
  Suppose $(\u^*,\v^*)$ are optimal solutions to the perturbed dual SVM problem \eqref{prob:sm-svm-dual-p}. Then
  \[
    \u^* = -\bpsi'_c(\w^*,t^*),\ \text{and}\ \v^* = -\bpsi'_d(\w^*,t^*).
  \]
\end{theorem}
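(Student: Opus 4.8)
The plan is to read off the claimed identities entry by entry from the KKT system \eqref{eqn:svm-kkt} that links the perturbed dual \eqref{prob:sm-svm-dual-p} to its dual \eqref{prob:dpd}, and then to match the resulting expression for each $u_i^*$ against the three branches of $\psi'$ recorded in \eqref{eqn:psi-prime}. I would fix an optimal primal-dual pair, so that $(\u^*,\v^*)$ together with some primal optimum $(\w^*,t^*,\s^*,\z^*,\bzeta^*)$ of \eqref{prob:dpd} satisfies \eqref{eqn:svm-kkt}, and write $\theta_i := \c_i^T\w^* + t^*$. Throughout I would use that our parameters give $\mu\gamma = 1/2$ and $1/\mu = 128/n$, so that $-\psi'$ takes the value $\gamma$ for $\theta \le 1/2$, the value $(1-\theta)/\mu$ for $\theta \in [1/2,1]$, and $0$ for $\theta \ge 1$.

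First I would isolate a pointwise formula for $u_i^*$. Combining $\u^* = \z_1^*/\mu$ with the stationarity row $\z_1 = (1-t)\e - \s_1 - C\w + \bzeta_1$ and using $(C\w^*)_i = \c_i^T\w^*$ gives
\[
  u_i^* = \tfrac{1}{\mu}\bigl(1 - \theta_i - s_{1,i}^* + \zeta_{1,i}^*\bigr).
\]
Next I would extract the termwise complementarity relations from their aggregated form: because $s_{1,i}^* \ge 0$ and $u_i^* - \gamma \le 0$, the scalar equation $\s_1^T(\u^* - \gamma\e)=0$ forces $s_{1,i}^*(u_i^*-\gamma)=0$ for every $i$, and likewise the sign conditions $\zeta_{1,i}^* \ge 0$, $u_i^* \ge 0$ together with $\bzeta_1^T\u^*=0$ force $\zeta_{1,i}^* u_i^* = 0$.

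With these in hand the proof reduces to a case split on the position of $u_i^*$ in $[0,\gamma]$. If $0 < u_i^* < \gamma$, both complementarity relations force $s_{1,i}^* = \zeta_{1,i}^* = 0$, so $u_i^* = (1-\theta_i)/\mu$, and the constraint $u_i^* \in (0,\gamma)$ is equivalent to $\theta_i \in (1/2,1)$, exactly the middle branch where $-\psi'(\theta_i) = (1-\theta_i)/\mu$. If $u_i^* = \gamma$, then $\zeta_{1,i}^* = 0$ and solving the pointwise formula gives $s_{1,i}^* = 1/2 - \theta_i \ge 0$, hence $\theta_i \le 1/2$, the branch on which $-\psi'(\theta_i)=\gamma$. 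If $u_i^* = 0$, then $s_{1,i}^* = 0$ and $\zeta_{1,i}^* = \theta_i - 1 \ge 0$, hence $\theta_i \ge 1$, where $-\psi'(\theta_i)=0$. In every case $u_i^* = -\psi'(\theta_i)$, which is precisely the $i$-th entry of $-\bpsi_c'(\w^*,t^*)$. The argument for $\v^*$ is identical after replacing the fifth KKT row by the sixth $\z_2 = (1+t)\e - \s_2 + D\w + \bzeta_2$ and noting that $1 + t^* + \d_i^T\w^* = 1 - (-\d_i^T\w^* - t^*)$, so that $-\d_i^T\w^* - t^*$ plays the role of $\theta_i$.

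I expect the only delicate point to be bookkeeping rather than anything conceptual. One must confirm that the three branches agree at the breakpoints $\theta_i \in \{1/2,1\}$; they do, since $-\psi'$ is continuous and the values computed in adjacent cases coincide there precisely because $\mu\gamma = 1/2$, and the breakpoints are absorbed into the boundary cases $u_i^* \in \{0,\gamma\}$, leaving no gap or overlap. The other point to justify carefully is the passage from the aggregated complementarity equations to their termwise versions, which is exactly where the sign constraints $\bz \le \u^* \le \gamma\e$ and $\s^*,\bzeta^* \ge \bz$ from \eqref{eqn:svm-kkt} are needed. (Conceptually this identity is the envelope relation for the value function $\psi$ in \eqref{eqn:psi-min}: $u_i^*$ is the multiplier of the margin constraint, and $-\psi'$ is its sensitivity, but the direct KKT case analysis above is self-contained.)
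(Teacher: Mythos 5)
Your proof is correct and rests on the same foundation as the paper's: the KKT system \eqref{eqn:svm-kkt}, a pointwise case analysis, and matching against the branches of $\psi'$ in \eqref{eqn:psi-prime}. The one structural difference is the direction of the case split. The paper partitions on the margin $\theta_i^*$ (the three branches of $\psi'$) and then pins down $u_i^*$ within each branch by contradiction arguments (assuming $u_i^*=0$ or $u_i^*=\gamma$ and deriving a sign violation), whereas you partition on the position of $u_i^*$ in $[0,\gamma]$ and read off the admissible range of $\theta_i$ directly from complementarity and the sign constraints. Your organization is slightly cleaner: no contradictions are needed, and exhaustiveness of the cases is automatic from the dual bound $\bz \le \u^* \le \gamma\e$. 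Two details you handle that the paper glosses over are worth keeping: the derivation of termwise complementarity from the aggregated equations $\s_1^T(\u^*-\gamma\e)=0$ and $\bzeta_1^T\u^*=0$ (which genuinely requires the sign conditions), and the explicit check that the branch values of $-\psi'$ agree at the breakpoints $\theta_i \in \{1/2,1\}$, so that the boundary cases $u_i^*\in\{0,\gamma\}$ absorb them without gap or overlap.
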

\begin{proof}
  We will prove this result for $\u^*$ since the $\v^*$ case follows the same reasoning.

  From the KKT conditions \eqref{eqn:svm-kkt}, we have $u_i^* = z_{i,1}^*/\mu = (1 - \theta_i^* - s_{i,1}^* + \zeta^*_{i,1})/\mu$, where $\theta_i^* := \c_i^T\w^* + t^*$.
  
  First, consider the case where $\theta_i^* \geq 1$. In this case, $\psi'(\theta_i^*) = 0$, and in the dual of the dual \eqref{prob:dpd},
  $z^*_{i,1} = 0 = s_{i,1}^*$ must hold. It follows that $u_i^* = 0 = -\psi'(\theta_i^*)$.

  The next case is $\theta_i^* \in (1-\mu\gamma, 1)$. Suppose for sake of contradiction that $u_i^* = 0$, then by the complementary slackness
  condition $s^*_{i,1}(u_i - \gamma) = 0$, $s^*_{i,1} = 0$, and $u_i^* = (1-\theta_i^* + \zeta_{i,1}^*)/\mu$. But then
  \[
    0 = u_i^* = (1-\theta_i^* + \zeta^*_{i,1})/\mu \iff \zeta^*_{i,1} = \theta_i^* - 1,
  \]
  where $\theta_i^* - 1 < 0$.
  This is a contradiction because $\zeta_{i,1}^* \geq 0$. So $u_i^* > 0$, which from the other complementary slackness condition $u_i^*\zeta_{i,1}^* = 0$, means $\zeta_{i,1}^* = 0$.
  Now suppose that $u_i^* = \gamma$. This leads to a contradiction also, since
  \[
    \gamma = u_i^* = (1-\theta_i^* - s_{i,1}^*)/\mu \iff s_{i,1}^* = 1 - \mu\gamma - \theta_i^*,
  \]
  but $1 - \mu\gamma - \theta_i^* < 0$ whereas $s_{i,1}^* \geq 0$. It follows that $u_i^* \in (0,\gamma)$, which implies by complementary slackness that $s_{i,1}^* = 0 = \zeta_{i,1}^*$, and
  \[
    u_{i}^* = \frac{1}{\mu}(1-\theta_i^*) = -\psi'(\theta_i^*).
  \]
  
  Finally, consider the case where $\theta_i^* \leq 1 - \mu\gamma$. Suppose that $u_i^* < \gamma$. Then $s_{i,1}^* = 0$, and
  \[
    \gamma > u_i^* = (1-\theta_i^* + \zeta^*_{i,1})/\mu \iff \mu\gamma > 1 - \theta_i^* + \zeta^*_{i,1},
  \]
  but $1-\theta_i^* + \xi_{i,1}^* \geq \mu\gamma$, which is a contradiction. It follows that
  \[
    u_i^* = \gamma = -\psi'(\theta^*_i).
  \]
\end{proof}

The following lemma says that, for iteration $k$, under the assumption that $\c_i$ is properly classified, once $u_{k,i}^+ := (\u_k^{+})_i$ lies in an open ball of radius
$\Delta/2$ around $u_i^*$, we can be sure that $u_{k,i} := (\u_k)_i$ is more than $\Delta/2$ away from its upper bound of $\gamma$.
The exact same reasoning can be applied to $v^+_{k,i} := (\v_k^{+})_i$ for $i \in [l]$.
\begin{lemma} \label{lem:classified-characterization-1}
  Suppose $\gamma - u_i^* = \Delta$ for some $\Delta > 0$ and $i \in [j]$. If $\lvert u_{k,i}^+ - u_i^*\rvert < \Delta/2$,
  then $\gamma - u_{k,i}^+ > \Delta/2$. Likewise, if $\gamma - v_i^* = \Delta$ for some $\Delta > 0$ and $i \in [l]$, then
  $\gamma - v_{k,i}^+ > \Delta/2$.
\end{lemma}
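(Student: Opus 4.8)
The plan is to prove this directly from the defining equation $\gamma - u_i^* = \Delta$ together with the proximity hypothesis, via a single triangle-inequality estimate. First I would decompose the quantity of interest by inserting $u_i^*$:
\[
  \gamma - u_{k,i}^+ = (\gamma - u_i^*) + (u_i^* - u_{k,i}^+) = \Delta + (u_i^* - u_{k,i}^+).
\]
The hypothesis $\lvert u_{k,i}^+ - u_i^*\rvert < \Delta/2$ gives in particular $u_i^* - u_{k,i}^+ > -\Delta/2$, so substituting this lower bound into the display yields
\[
  \gamma - u_{k,i}^+ > \Delta - \frac{\Delta}{2} = \frac{\Delta}{2},
\]
which is exactly the desired conclusion.

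For the second statement concerning $\v$, I would simply observe that the argument used no property specific to the $\c_i$ side: the quantities $\gamma$, $u_i^*$, and $u_{k,i}^+$ enter only through the relation $\gamma - u_i^* = \Delta$ and the bound $\lvert u_{k,i}^+ - u_i^*\rvert < \Delta/2$. Replacing $u_i^*$ by $v_i^*$ and $u_{k,i}^+$ by $v_{k,i}^+$ (and the index set $[j]$ by $[l]$) reproduces the identical chain of inequalities, establishing $\gamma - v_{k,i}^+ > \Delta/2$.

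Since the entire argument reduces to the elementary observation that a point within distance $\Delta/2$ of $u_i^*$ cannot be closer than $\Delta/2$ to the value $\gamma = u_i^* + \Delta$, there is no substantive obstacle here; the lemma is purely a bookkeeping step. The only thing to be careful about is that the hypothesis is a strict inequality, so the conclusion $\gamma - u_{k,i}^+ > \Delta/2$ is also strict, which is what lets us later separate $u_{k,i}$ from its upper bound $\gamma$ with room to spare.
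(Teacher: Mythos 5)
Your proof is correct and is essentially the same elementary triangle-inequality argument as the paper's, just rearranged: the paper bounds $\lvert u_{k,i}^+ - u_i^*\rvert = \lvert u_{k,i}^+ - \gamma + \Delta\rvert$ from below via the reverse triangle inequality, while you decompose $\gamma - u_{k,i}^+ = \Delta + (u_i^* - u_{k,i}^+)$ directly. If anything, your version is marginally cleaner, since the paper's final step $\lvert u_{k,i}^+ - \gamma\rvert = \gamma - u_{k,i}^+$ implicitly uses the feasibility bound $u_{k,i}^+ \le \gamma$ (which holds because $u_{k,i}^+$ is a projection onto $\Omega$), whereas your one-sided estimate needs no such fact.
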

\begin{proof}
  Suppose $\gamma - u_i^* = \Delta$ for some $\Delta > 0$ and $i \in [j]$. Then
  \begin{align*}
    \frac{\Delta}{2} > \lvert u_{k,i}^+ - u_i^*\rvert &= \lvert u_{k,i}^+ - \gamma + \Delta\rvert\\
                                                        &\geq -\lvert u_{k,i}^+ - \gamma\rvert + \lvert\Delta\rvert\\
                                                        &= -(\gamma - u_{k,i}^+) + \Delta.
  \end{align*}
  The same argument applies for any $v_i^*$.
\end{proof}

In fact, the two inequalities above in terms of $u_{k,i}^+$ can tell us that $u_i^*$ is properly classified.
\begin{lemma} \label{lem:classified-characterization-2}
  Let $\Delta > 0$ and $i \in [j]$. If $\lvert u_{k,i}^+ - u_i^*\rvert < \Delta/2$ and $\gamma - u_{k,i}^+ > \Delta/2$, then $u_i^* < \gamma$ and $u_i^*$ is properly classified.

  Likewise, if $\lvert v_{k,i}^+ - v_i^*\rvert < \Delta/2$ and $\gamma - v_{k,i}^+ > \Delta/2$, then $v_i^*$ is properly classified for any $i \in [l]$.
\end{lemma}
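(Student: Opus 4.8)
The plan is to reduce the entire claim to verifying the single inequality $u_i^* < \gamma$. By the definition given at the start of this subsection, a point is \emph{properly classified} precisely when its optimal dual value satisfies $u_i^* < \gamma$ (respectively $v_i^* < \gamma$). Hence once I establish $u_i^* < \gamma$, both assertions of the first sentence follow at once and nothing further must be checked.

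To prove $u_i^* < \gamma$, equivalently $\gamma - u_i^* > 0$, I would decompose the target quantity by inserting the iterate value $u_{k,i}^+$:
\[
  \gamma - u_i^* = (\gamma - u_{k,i}^+) + (u_{k,i}^+ - u_i^*).
\]
The first summand is bounded below by $\Delta/2$ by the second hypothesis. For the second summand I would use $u_{k,i}^+ - u_i^* \geq -\lvert u_{k,i}^+ - u_i^*\rvert > -\Delta/2$, which is exactly the first hypothesis. Adding the two estimates yields $\gamma - u_i^* > \Delta/2 - \Delta/2 = 0$, as required. The argument for $v_i^*$ is verbatim identical after replacing $u$ by $v$, the index set $[j]$ by $[l]$, and the two hypotheses by their $v$-analogues; it gives $v_i^* < \gamma$ and hence proper classification.

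This lemma is elementary: the only content is the one-line triangle-inequality estimate above, so I do not expect any genuine obstacle. The single point worth flagging is conceptual rather than technical, namely recognizing that the definition of ``properly classified'' collapses to the strict inequality $u_i^* < \gamma$, so that no verification of proper classification beyond this inequality is needed.
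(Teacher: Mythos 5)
Your proof is correct, and it follows the same elementary triangle-inequality spirit as the paper's, but with a small and arguably better twist. The paper bounds $\lvert u_{k,i}^+ - u_i^*\rvert = \lvert u_{k,i}^+ - \gamma + \gamma - u_i^*\rvert$ from below via the reverse triangle inequality, concluding only that $\lvert \gamma - u_i^*\rvert > 0$, i.e.\ $u_i^* \neq \gamma$; turning this into the signed conclusion $u_i^* < \gamma$ implicitly uses the dual feasibility bound $u_i^* \leq \gamma$ (which holds since $(\u^*,\v^*) \in \Omega$). Your additive decomposition $\gamma - u_i^* = (\gamma - u_{k,i}^+) + (u_{k,i}^+ - u_i^*)$ keeps track of signs throughout and delivers $\gamma - u_i^* > 0$ directly, with no appeal to feasibility of the optimal dual solution. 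Both arguments are one-liners, but yours is self-contained where the paper's leaves a (harmless) gap; your reduction of ``properly classified'' to the strict inequality $u_i^* < \gamma$ is also exactly how the paper uses its definition.
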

\begin{proof}
  Suppose $\lvert u_{k,i}^+ - u_i^*\rvert < \Delta/2$ and $\gamma - u_{k,i}^+ > \Delta/2$. Then we may write
  \begin{align*}
    \frac{\Delta}{2} &> \lvert u_{k,i}^+ - \gamma + \gamma - u_i^*\rvert\\
                     &\geq \lvert u_{k,i}^+ - \gamma\rvert - \lvert\gamma - u_i^*\rvert\\
                     &> \frac{\Delta}{2} - \lvert\gamma - u_i^*\rvert,
  \end{align*}
  which implies $\lvert\gamma - u_i^*\rvert > 0$, so $u_i^* < \gamma$. The proof for the case of $v_i$ is identical.
\end{proof}
The conditions $\lvert u_{k,i}^+ - u_i^*\rvert < \Delta/2$ and $\gamma - u_{k,i}^+ > \Delta/2$ can be easily verified in FISTA.
Recall Theorem~\ref{thm:dist-to-opt-dual} says that the size of the projected gradient step is an upper bound on
$\lVert(\u_k^+,\v_k^+) - (\u^*,\v^*)\rVert$, so we may check upon each iteration if the size of the projected gradient
step is less than $\Delta/2$ and then verify for which $i$ we satisfy $\gamma - u_{k,i}^+ > \Delta/2$ and
$\gamma - v_{k,i}^+ > \Delta/2$ to determine the properly classified points.

The auxiliary primal variables $(\s,\z)$ are determined by $(\w,t)$ as the solutions to \eqref{eqn:psi-min}.
In the remainder of this section we characterize $(\w^*,t^*)$ with the goal of finding bounds
for $\z^*$ and subsequently expressing the optimal dual variables $\u^*$ and $\v^*$ via the identities
$\u^* = \z_1^*/\mu$ and $\v^* = \z_2^*/\mu$.

\begin{lemma} \label{lem:u-bounded-away}
  If $\c_i^T\w^* + t^* \geq 1 - K$ for some $K \in [0,1/2)$, then $u_i^* \leq 2K\gamma < \gamma$. Likewise,
  if $-\d_i^T\w^* - t^* \geq 1 - K$ for some $K \in [0,1/2)$, then $v_i^* \leq 2K\gamma < \gamma$.
\end{lemma}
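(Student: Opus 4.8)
The plan is to reduce the lemma to a direct evaluation of $\psi'$ at the signed margin of $\c_i$, using the theorem stated immediately before it. That theorem gives $\u^* = -\bpsi'_c(\w^*,t^*)$, which componentwise reads $u_i^* = -\psi'(\theta_i^*)$ where $\theta_i^* := \c_i^T\w^* + t^*$. The hypothesis $\c_i^T\w^* + t^* \geq 1 - K$ then translates into $\theta_i^* \geq 1 - K$, and since $K \in [0,1/2)$ this forces $\theta_i^* > 1/2$. The point of this observation is that $\theta_i^*$ lies strictly above the breakpoint $1/2$, so the constant piece $\psi' \equiv -64/n$ of \eqref{eqn:psi-prime} never applies; only the linear-derivative piece on $[1/2,1]$ and the vanishing piece on $(1,\infty)$ are relevant.

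Next I would split into two cases according to \eqref{eqn:psi-prime}. If $\theta_i^* > 1$, then $\psi'(\theta_i^*) = 0$, so $u_i^* = 0 \leq 2K\gamma$, and the strict bound $u_i^* < \gamma$ is immediate. If instead $\theta_i^* \in [1 - K, 1]$, then using $128/n = 2\gamma$ we have $\psi'(\theta_i^*) = 2\gamma(\theta_i^* - 1)$, whence
\[
  u_i^* = -\psi'(\theta_i^*) = 2\gamma(1 - \theta_i^*) \leq 2\gamma K,
\]
where the inequality uses $1 - \theta_i^* \leq K$. Finally, $K < 1/2$ gives $2K\gamma < \gamma$, completing the bound $u_i^* \leq 2K\gamma < \gamma$. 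The statement for $\v^*$ follows identically upon replacing $\c_i^T\w^* + t^*$ by $-\d_i^T\w^* - t^*$ and invoking $\v^* = -\bpsi'_d(\w^*,t^*)$.

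There is no substantive obstacle here: the entire content is the reinterpretation of $u_i^*$ as $-\psi'(\theta_i^*)$ together with bookkeeping on the breakpoints of $\psi'$. The only points requiring care are confirming that the restriction $K < 1/2$ is precisely what keeps $\theta_i^*$ out of the linear region of the loss (so that the resulting bound is linear in $K$ rather than a fixed constant), and checking that the two cases agree at the seam $\theta_i^* = 1$, where both formulas yield $u_i^* = 0$.
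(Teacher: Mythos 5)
Your proof is correct, and it is essentially the same computation as the paper's, differing only in which form of the KKT information it invokes. The paper's proof works directly from the KKT conditions: it notes that the inner problem \eqref{eqn:psi-min} forces $z_i^* = \max(1-\theta,0)$ once $\theta \geq 1/2$, and then uses the stationarity relation $u_i^* = z_i^*/\mu$ to get $u_i^* = 128(1-\theta)/n \leq 2K\gamma$. You instead cite the theorem proved immediately beforehand, $\u^* = -\bpsi_c'(\w^*,t^*)$, and read the bound off the piecewise formula \eqref{eqn:psi-prime}; since that theorem was itself derived from the same KKT conditions, the mathematical content is identical, and the identity $-\psi'(\theta) = \max(1-\theta,0)/\mu$ on $\theta \geq 1/2$ makes the two routes literally the same function evaluated two ways. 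If anything, your version is slightly tidier on two points the paper glosses over: you treat the case $\theta_i^* > 1$ explicitly (the paper asserts $\theta \in (1/2,1]$, which does not follow from the hypothesis, though the conclusion $u_i^* = 0$ there is harmless), and you correctly write the final step as an inequality $u_i^* = 2\gamma(1-\theta_i^*) \leq 2K\gamma$, where the paper's chain displays equalities that should be ``$\leq$''.
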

\begin{proof}
  Suppose $K \in [0,1/2)$. From the KKT conditions of \eqref{eqn:psi-min} it can be shown that $z_i^* = \max(1 - \theta,0)$ for all $\theta \geq 1/2$,
  and in either case of $\theta = \c_i^T\w^* + t^* \geq 1 - K$ or $\theta = -\d_i^T\w^* - t^* \geq 1 - K$,
  $\theta \in (1/2,1]$. We may now apply the KKT conditions \eqref{eqn:svm-kkt} for the perturbed dual formulation \eqref{prob:sm-svm-dual-p},
  in particular the stationarity conditions $u_i^* = z_i^*/\mu = 128(1-\theta)/n = 2K\gamma$ or $v_i^* = z_i^*/\mu = 128(1-\theta)/n = 2K\gamma$.
  Since $K < 1/2$, $2K\gamma < \gamma$.
\end{proof}
The above lemma is useful because it implies that if $\c_i^T\w^* + t^* \geq 1/2 + \Omega(1)$, then $u_i^* \leq \gamma - \Omega(1)$, certifying that
$\c_i$ is properly classified. The case for $\d_i$ is symmetric. We now prove a bound on the norm of $\w^*$ in terms of the positive constant $\xi$.
Notably, $\lim_{\rho,\nu\to 0} \xi = (1-\sigma)/\sigma \in [0,1]$, where $\sigma := \min(\sigma_1,\sigma_2)$.

\begin{lemma} \label{lem:wstar-bound}
  $\lVert\w^*\rVert \leq 1 + \xi$, where $\xi := \sqrt{(1+\delta)^2 + 32\nu\left(11 + 8\delta\right)} - 1$ and $\delta := \frac{1}{\sigma-\rho} - 1$.
\end{lemma}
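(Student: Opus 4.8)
The plan is to bound $\|\w^*\|$ by the standard device of comparing the optimal objective against a well-chosen reference separator. Since $(\w^*,t^*)$ minimizes $f$ in \eqref{eqn:dpd} and the loss terms $\psi$ are nonnegative, we have $\tfrac12\|\w^*\|^2 \le f(\w^*,t^*) \le f(\hat\w,\hat t)$ for \emph{any} trial pair $(\hat\w,\hat t)$. The whole problem therefore reduces to exhibiting one separator whose objective value I can control and whose squared norm matches the target $(1+\xi)^2$.

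First I would take the reference separator $\hat\w := (1+\delta)\e_1 = \tfrac{1}{\sigma-\rho}\e_1$ and $\hat t := 0$, so that the regularizer contributes exactly $\tfrac12(1+\delta)^2$. The key step is to verify that this separator assigns signed margin at least $1$ to every well-classified point. For a point $\c_i \in B(\sigma_1\e_1,\rho)$ its first coordinate obeys $(\c_i)_1 \ge \sigma_1 - \rho \ge \sigma - \rho$, whence $\hat\w^T\c_i + \hat t = (1+\delta)(\c_i)_1 \ge (1+\delta)(\sigma-\rho) = 1$, and symmetrically $-\hat\w^T\d_i - \hat t \ge 1$ for points near $-\sigma_2\e_1$. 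Because $\psi$ vanishes on $[1,\infty)$ by \eqref{eqn:psi-cc}, all $j_0+l_0$ well-classified points contribute nothing to $f(\hat\w,0)$; this is precisely what pins down the choice $\hat\w = (1+\delta)\e_1$.

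Next I would bound the noise contribution. Each noisy point $\p_i$ lies in the enclosing ball, so $\|\p_i\| \le R$ and its signed margin $\theta$ satisfies $\theta \ge -|\hat\w^T\p_i| \ge -(1+\delta)R$. Checking the three pieces of \eqref{eqn:psi-cc} (using $\mu\gamma = 1/2$) gives the crude bound $\psi(\theta) \le \gamma\max(1-\theta,0) \le \gamma\bigl(1 + (1+\delta)R\bigr)$, valid for all $\theta$. Summing over the at most $\nu n$ noisy points and inserting $\gamma = 64/n$ produces a total noise contribution of at most $64\nu\bigl(1 + (1+\delta)R\bigr)$, so that $\|\w^*\|^2 \le (1+\delta)^2 + 128\nu\bigl(1 + (1+\delta)R\bigr)$.

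The final step, and the only real bookkeeping obstacle, is to absorb $(1+\delta)R$ into the target constants. Using $R = \max(\sigma_1,\sigma_2)+\rho \le 1+\rho$, the identity $\sigma-\rho = 1/(1+\delta)$, and $\sigma \le 1$, one gets $(1+\delta)R \le \tfrac{1+\rho}{\sigma-\rho} = (1+\delta)(1+\sigma) - 1 \le 1 + 2\delta$. Substituting $1 + (1+\delta)R \le 2 + 2\delta$ yields $128\nu(2+2\delta) \le 32\nu(11+8\delta)$, hence $\|\w^*\|^2 \le (1+\delta)^2 + 32\nu(11+8\delta) = (1+\xi)^2$, and the claim follows by taking square roots. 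The genuinely delicate points are the margin verification for the well-classified points and carrying the parameter choices $\gamma = 64/n$, $\mu = n/128$ through the loss bound; the remaining constant-chasing is loose and leaves slack, which presumably explains the round constants $11$ and $8$.
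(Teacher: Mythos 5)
Your proposal is correct and follows essentially the same route as the paper: the same trial separator $\hat\w=(1+\delta)\e_1$, $\hat t=0$, the same observation that well-classified points contribute zero loss, a bound on the noise contribution, and the final comparison $\lVert\w^*\rVert^2\le 2f(\w^*,t^*)\le 2f(\hat\w,\hat t)$. The only difference is bookkeeping in the noise term---you use the hinge upper bound $\psi(\theta)\le\gamma\max(1-\theta,0)$ together with $\lVert\p_i\rVert\le R$ and $(1+\delta)R\le 1+2\delta$, whereas the paper evaluates the linear piece of $\psi$ at $-2(1+\delta)$ to get exactly $\tfrac{16}{n}(11+8\delta)$ per noisy point---and your version in fact lands strictly inside the target constant $32\nu(11+8\delta)$.
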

\begin{proof}
  Recall the objective in \eqref{eqn:dpd}:
  \[
    f(\w,t) = \frac{1}{2}\Vert\w\Vert^2+\sum_{i=1}^j\psi(\c_i^T\w+t)+\sum_{i=1}^l\psi(-\d_i^T\w-t)
  \]
  and note that $\w = (1+\delta)\e_1, t = 0$ is a valid solution, where the definitions $\sigma = \min(\sigma_1,\sigma_2)$ and $\rho < \sigma$ ensure $\delta > 0$.
  Under this choice of $\delta$, we have for all $\c_i$ well-classified:
  \begin{align*}
    \c_i^T\w + t &= \c_i^T\w\\
                 &= (1 + \delta)\c_i^T\e_1\\
                 &\geq (1+\delta)(\sigma_1 - \rho)\e_1^T\e_1\\
                 &= (1+\delta)(\sigma_1 - \rho)\\
                 &= \frac{\sigma_1 - \rho}{\sigma - \rho}\\
                 &\geq 1,
  \end{align*}
  where we used that all well-classified points $\c_i$ lie in $B(\sigma_1\e_1,\rho)$ and that $\rho < \min(\sigma_1,\sigma_2)$.
  The same argument can be made to see that $\d_i^T\w \leq -1$ for all $\d_i$ well-classified.
  Therefore, $f(\w,t)$ for this choice of $\w$ and $t$ has contributions only from the term
  $\frac{1}{2}\lVert\w\rVert^2$ and the misclassified points, since $\psi(\theta) = 0$ for all well-classified points.
  For $\c_i$ not well-classified, we have that
  \[
    \c_i^T\w + t = \c_i^T\w \geq -2(1+\delta),
  \]
  where $-2(1+\delta) < -2$. The quadratic portion of $\psi$ is bounded above by
  $64/n$, while the linear portion corresponding to the interval $(-\infty,1/2)$ is always strictly larger than $64/n$
  for $\theta < -2$. Hence, we may use the linear portion of $\psi$ to obtain the upper bound
  \[
    \psi(\c_i^T\w + t) \leq \psi(-2(1+\delta)) \leq \frac{16}{n}\left(11 + 8\delta\right).
  \]
  The argument for $\psi(-\d_i^T\w - t) \leq \frac{16}{n}(11 + 8\delta)$ for $\d_i$ not well-classified is similar. Therefore, the sum terms
  in \eqref{eqn:dpd} must be bounded as follows, since $n_{\text{noise}} \leq \nu n$.
  \[
    \sum_{i=1}^j\psi(\c_i^T\w+t)+\sum_{i=1}^l\psi(-\d_i^T\w-t) \leq 16\nu\left(11 + 8\delta\right).
  \]
  Moreover, $\frac{1}{2}\lVert\w\rVert^2 = \frac{1}{2}(1+\delta)^2$, so we have
  \[
    f(\w,t) \leq \frac{1}{2}(1+\delta)^2 + 16\nu\left(11 + 8\delta\right),
  \]
  and thus
  \[
    f(\w^*,t^*) \leq f(\w,t) \leq \frac{1}{2}(1+\delta)^2 + 16\nu\left(11 + 8\delta\right).
  \]
  We may now obtain a bound on $\lVert\w^*\rVert$ by noting that $\lVert\w^*\rVert^2 \leq 2f(\w^*,t^*)$:
  \[
    \lVert\w^*\rVert \leq \sqrt{f(\w^*,t^*)} \leq \sqrt{(1+\delta)^2 + 32\nu\left(11 + 8\delta\right)}.
  \]
  Letting $\xi := \sqrt{(1+\delta)^2 + 32\nu\left(11 + 8\delta\right)} - 1$, we may rewrite this bound as $\lVert\w^*\rVert \leq 1 + \xi$.
\end{proof}

For the rest of this section we make the assumption that $\rho,\nu$ in our data model are chosen sufficiently small to ensure that
\begin{equation} \label{ass:rho-nu}
  \frac{1+\xi+64\nu}{\sigma - \rho} \leq \frac{64}{3}.
\end{equation}
\begin{lemma} \label{lem:theta-bound}
  Let $J \subseteq \{1,\hdots,j\}$ be the well-classified points in the $\c$-class, and $L \subseteq \{1,\hdots,l\}$ be the well-classified
  points in the $\d$-class. There exists an $i\in J$ such that
  \[
    \c_i^T\w^* + t^* \geq 1 - \frac{3(1+\xi + 64\nu)}{128(\sigma_1 - \rho)}.
  \]
  Likewise, there exists an $i \in L$ such that
  \[
    -\d_i^T\w^* - t^* \geq 1 - \frac{3(1+\xi + 64\nu)}{128(\sigma_2 - \rho)}.
  \]
\end{lemma}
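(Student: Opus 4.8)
The plan is to exploit the first-order optimality conditions for the unconstrained primal problem \eqref{eqn:dpd}, combined with the norm bound of Lemma~\ref{lem:wstar-bound} and an averaging (pigeonhole) argument over the well-classified points. Writing $\theta_i^* := \c_i^T\w^* + t^*$ and using the identity $\u^* = -\bpsi_c'(\w^*,t^*)$ (so $u_i^* = -\psi'(\theta_i^*)$) together with the KKT relation $\w^* = C^T\u^* - D^T\v^*$ from \eqref{eqn:svm-kkt}, the stationarity condition in $\w$ reads $\w^* = \sum_{i=1}^j u_i^*\c_i - \sum_{i=1}^l v_i^*\d_i$, where each $u_i^*, v_i^* \in [0,\gamma]$. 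The guiding observation is that, by \eqref{eqn:psi-prime}, a small dual weight $u_i^*$ corresponds to a large margin $\theta_i^*$, so it suffices to exhibit a single well-classified $\c_i$ whose weight $u_i^*$ is small.

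First I would take the first coordinate of the stationarity identity, namely $(\w^*)_1 = \sum_{i=1}^j u_i^*(\c_i)_1 - \sum_{i=1}^l v_i^*(\d_i)_1$, and isolate the well-classified $\c$-points $J$. Since $(\c_i)_1 \geq \sigma_1 - \rho > 0$ for $i\in J$, this yields $(\sigma_1-\rho)\sum_{i\in J}u_i^* \leq \sum_{i\in J}u_i^*(\c_i)_1$, so it remains to bound the right-hand side from above. Rearranging gives $\sum_{i\in J}u_i^*(\c_i)_1 = (\w^*)_1 - \sum_{i\notin J}u_i^*(\c_i)_1 + \sum_{i=1}^l v_i^*(\d_i)_1$, where $(\w^*)_1 \leq \lVert\w^*\rVert \leq 1+\xi$ by Lemma~\ref{lem:wstar-bound}; the well-classified $\d$-points satisfy $(\d_i)_1 \leq -(\sigma_2-\rho) \leq 0$ and so contribute nonpositively; and the surviving terms involve only the at most $n_{\text{noise}} \leq \nu n$ noisy points, each with weight $\leq \gamma = 64/n$ and bounded first coordinate, contributing at most $64\nu$ in total. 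Combining, $\sum_{i\in J}u_i^* \leq (1+\xi+64\nu)/(\sigma_1-\rho)$.

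Next I would average: since $|J| = j_0 \geq n/3$, the smallest weight among the well-classified $\c$-points satisfies $\min_{i\in J}u_i^* \leq \tfrac{3}{n}\sum_{i\in J}u_i^* \leq \tfrac{3(1+\xi+64\nu)}{n(\sigma_1-\rho)}$. Fixing an index $i\in J$ attaining this minimum, the conversion back to a margin uses \eqref{eqn:psi-prime}: whenever $\theta_i^* \geq 1/2$ one has $1 - \theta_i^* \leq \tfrac{n}{128}u_i^*$, which substituted into the previous bound gives exactly $\theta_i^* \geq 1 - \tfrac{3(1+\xi+64\nu)}{128(\sigma_1-\rho)}$. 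The $\d$-class claim follows by the symmetric computation with $(\d_i)_1 \leq -(\sigma_2-\rho)$.

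I expect the main obstacle to be justifying that the minimizing index truly lies in the differentiable quadratic regime $\theta_i^* \geq 1/2$, since the relation $u_i^* = \tfrac{128}{n}(1-\theta_i^*)$, and hence the final conversion, holds only there (for $\theta_i^* < 1/2$ one instead has $u_i^* = \gamma$). This is precisely where assumption \eqref{ass:rho-nu} enters: using $\sigma_1 \geq \sigma$, it forces $\tfrac{3(1+\xi+64\nu)}{n(\sigma_1-\rho)} < \gamma$, so the minimal weight is strictly below $\gamma$, which rules out the flat-derivative regime and guarantees $\theta_i^* > 1/2$. A secondary technical point is to pin down the coordinate bound used for the noisy points so that their aggregate contribution is indeed controlled by $64\nu$.
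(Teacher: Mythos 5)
Your proposal is correct and takes essentially the same route as the paper: the paper's proof likewise starts from stationarity in the first coordinate (its identity $0 = w_1^* + \sum_i \psi'(\c_i^T\w^*+t^*)c_{i,1} - \sum_i \psi'(-\d_i^T\w^*-t^*)d_{i,1}$ is exactly your dual stationarity $\w^* = C^T\u^* - D^T\v^*$ with $u_i^* = -\psi'$), bounds $w_1^*$ by $1+\xi$ via Lemma~\ref{lem:wstar-bound}, discards the nonpositive well-classified $\d$-contribution, charges the noisy points $64\nu$, and then extracts a single good index in $J$ (the paper via a max, you via averaging --- equivalent here) before converting through the quadratic branch of $\psi'$ using \eqref{ass:rho-nu} and $\lvert J\rvert \geq n/3$. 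Even the two technical points you flag (strictness at the boundary of \eqref{ass:rho-nu} needed to exclude the flat regime $\theta < 1/2$, and the coordinate bound on noisy points behind the $64\nu$ figure) are present, and glossed over in the same way, in the paper's own proof.
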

\begin{proof}
  We will prove the inequality for the $\c$-points only, the proof for the $\d$-points proceeds analogously.

  By optimality of $(\w^*,t^*)$, the following derivative must be zero:
  \[
    \frac{d}{dw_1} f(\w^*,t^*) = w_1^*
    +\sum_{i=1}^j\psi'(\c_i^T\w^*+t^*)c_{1,i}\\
    -\sum_{i=1}^l\psi'(-\d_i^T\w^*-t)d_{i,1}.
  \]
  Define
  \begin{align*}
    q&:= \max_{i\in J}\psi'(\c_i^T\w^*+t^*)c_{i,1}, \\
    r&:=\max_{i\in L}(-\psi'(-d_i^T\w^*-t^*)d_{i,1}).
  \end{align*}
  Note that in the definition of $\psi'(\theta)$, $-1/2 \leq \theta - 1 \leq 0$ on the interval $\theta \in [1/2,1]$, so $\lvert\psi'(\theta)\rvert \leq 64/n$. Moreover,
  $\lvert c_{i,1}\rvert \leq 2$ and $\lvert d_{i,1}\rvert \leq 2$ for $\c_i$ and $\d_i$ not well-classified, allowing us to write
  the bound:
  \begin{align*}
    0 &= \frac{d}{d w_1} f(\w^*,t^*)\\
      &= w_1 + \sum_{i=1}^k \psi'(\c_i^T\w^* + t^*)c_{1,i} - \sum_{i=1}^l \psi'(-\d_i^T\w^* - t)d_{i,1}\\
      &\leq (1+\xi) + \lvert J\rvert q + \lvert L\rvert r + \frac{64 n_{\text{noise}}}{n}\\
      &\leq (1+\xi) + \lvert J\rvert q + \lvert L\rvert r + 64\nu.
  \end{align*}
  Here, the first term comes from Lemma~\ref{lem:wstar-bound}, i.e. $w_1 \leq \lVert\w^*\rVert \leq 1+\xi$, the second and third terms come from the well-classified points, while the last
  term comes from the ill-classified points. Rearranging the above inequality we obtain
  \[
    \lvert J\rvert q + \lvert L\rvert r \geq -1 - \xi - 64\nu.
  \]
  Since both terms on the left hand side are nonpositive (because $\psi'(\cdot) \leq 0$ and $\c_{i,1} \geq 0$ for well-classified $\c$-points, and
  $d_{i,1} \leq 0$ for well-classified $\d$-points), we can deduce two separate inequalities:
  \begin{align*}
    &\lvert J\rvert q \geq - 1 - \xi - 64\nu,\\
    &\lvert L\rvert r \geq - 1 - \xi - 64\nu.
  \end{align*}
  The first inequality is rearranged to:
  \[
    q \geq -\frac{1+\xi + 64\nu}{\lvert J\rvert}.
  \]
  Thus, for $i \in J$ giving the maximum in the definition of $q$,
  \[
    \psi'(\c_i^T\w^* + t^*)c_{i,1} \geq -\frac{1 + \xi + 64\nu}{\lvert J\rvert},
  \]
  and since $c_{i,1} \geq \sigma_1 - \rho$, we must have that $\psi'(\c_i^T\w^* + t^*)c_{i,1} \leq \psi'(\c_i^T\w^* + t^*)(\sigma_1 - \rho)$, so
  \begin{equation} \label{eqn:psi-prime-2}
    \psi'(\c_i^T\w^* + t^*) \geq -\frac{1 + \xi + 64\nu}{\lvert J\rvert(\sigma_1 - \rho)}.
  \end{equation}
  By the assumption \eqref{ass:rho-nu} on $\rho,\nu$ we can ensure that
  \[
    \frac{1+\xi+64\nu}{\sigma_1 - \rho} \leq \frac{64}{3},
  \]
  in which case the assumption that $\lvert J\rvert \geq n/3$ allows us allows us to write
  \begin{align*}
    \psi'(\c_i^T\w^* + t^*) &\geq -\frac{3}{n}\left(\frac{1 + \xi + 64\nu}{\sigma_1 - \rho}\right)\\
                            &\geq -\frac{3}{n}\cdot\frac{64}{3}\\
                            &= -\frac{64}{n}.
  \end{align*}

  At the same time, we know that $\psi'(\c_i^T\w^* + t^*) \leq 0$,
  thus $\psi'(\c_i^T\w^* + t^*) \in (-64/n,0]$ and we can be sure from the definition of $\psi'$ in \eqref{eqn:psi-prime}
  that $\c_i^T\w^* + t^* \in (1/2,1]$. This allows us to rewrite \eqref{eqn:psi-prime-2} as
  \[
    \frac{128(\c_i^T\w^* + t^* - 1)}{n} \geq -\frac{1 + \xi + 64\nu}{\lvert J\rvert(\sigma_1 - \rho)}.
  \]
  Multiply both sides by $n$, use the assumption on the data that $n/\lvert J\rvert \leq 3$, and divide by $128$ to obtain
  \begin{equation} \label{eqn:marg-diff-bound}
    \c_i^T\w^* + t^* - 1 \geq -\frac{3(1+\xi + 64\nu)}{128(\sigma_1 - \rho)}.
  \end{equation}
\end{proof}
\begin{corollary} \label{cor:theta-bound}
  For any $i \in J$,
  \[
    \c_i^T\w^* + t^* \geq 1 - \frac{3(1+\xi + 64\nu)}{128(\sigma - \rho)} - 3\rho(1+\xi).
  \]
  Likewise, for any $i \in L$
  \[
    -\d_i^T\w^* - t^* \geq 1 - \frac{3(1+\xi + 64\nu)}{128(\sigma - \rho)} - 3\rho(1+\xi).
  \]
\end{corollary}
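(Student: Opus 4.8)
The plan is to upgrade the existence statement of Lemma~\ref{lem:theta-bound} into a bound that holds uniformly over an entire cluster, by exploiting the fact that all well-classified points of a given class are confined to a ball of radius $\rho$. I would argue the inequality for the $\c$-points; the $\d$-case is symmetric, with $B(\sigma_1\e_1,\rho)$ replaced by $B(-\sigma_2\e_1,\rho)$.

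First I would invoke Lemma~\ref{lem:theta-bound} to produce a single index $i_0 \in J$ satisfying $\c_{i_0}^T\w^* + t^* \geq 1 - \frac{3(1+\xi+64\nu)}{128(\sigma_1 - \rho)}$. Since $\sigma = \min(\sigma_1,\sigma_2) \leq \sigma_1$, we have $\sigma - \rho \leq \sigma_1 - \rho$, so the denominator may be replaced by $\sigma - \rho$ at the cost of only weakening the inequality; this reproduces the first subtracted term of the corollary. The key step is then to transfer this bound from $i_0$ to an arbitrary $i \in J$. Because both $\c_i$ and $\c_{i_0}$ lie in $B(\sigma_1\e_1,\rho)$, the cluster diameter bound $\lVert \c_i - \c_{i_0}\rVert \leq 2\rho$ holds. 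Writing $\c_i^T\w^* + t^* = (\c_{i_0}^T\w^* + t^*) + (\c_i - \c_{i_0})^T\w^*$ and applying Cauchy--Schwarz together with the norm bound $\lVert\w^*\rVert \leq 1+\xi$ from Lemma~\ref{lem:wstar-bound} gives $(\c_i - \c_{i_0})^T\w^* \geq -2\rho(1+\xi)$. Combining with the bound for $i_0$ yields
\[
  \c_i^T\w^* + t^* \geq 1 - \frac{3(1+\xi+64\nu)}{128(\sigma-\rho)} - 2\rho(1+\xi),
\]
which is in fact stronger than the claimed inequality, since the stated $3\rho(1+\xi)$ merely leaves additional slack.

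I do not anticipate a genuine obstacle, as this is essentially a geometric transfer argument: the only mild care needed is to track the harmless replacement of $\sigma_1$ by $\sigma$ in the denominator, and to confirm that the cluster-diameter estimate $2\rho$ is comfortably absorbed into the stated $3\rho(1+\xi)$ term. The $\d$-class inequality follows by the identical computation on the cluster centered at $-\sigma_2\e_1$.
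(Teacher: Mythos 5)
Your proof is correct, and it follows the same high-level strategy as the paper: transfer the single-index bound of Lemma~\ref{lem:theta-bound} to every index in $J$ by exploiting the cluster radius $\rho$ together with the norm bound $\lVert\w^*\rVert \leq 1+\xi$ from Lemma~\ref{lem:wstar-bound}. The execution, however, is genuinely different, and yours is cleaner. The paper splits coordinates: it writes $\c_{i'}^T\w^* = c_{i',1}w_1^* + \c_{i'}(2:d)^T\w^*(2:d)$, bounds the tail by $\rho(1+\xi)$, and then compares first coordinates via $c_{i',1} \geq \sigma_1-\rho$ and $c_{i,1} \leq \sigma_1+\rho$, paying an additional $2\rho w_1^*$; the two costs sum to the stated $3\rho(1+\xi)$. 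You instead apply Cauchy--Schwarz directly to the difference vector, $(\c_i-\c_{i_0})^T\w^* \geq -\lVert\c_i - \c_{i_0}\rVert\cdot\lVert\w^*\rVert \geq -2\rho(1+\xi)$. This buys three things. First, a slightly stronger constant, $2\rho(1+\xi)$ in place of $3\rho(1+\xi)$, so the stated bound holds with slack. Second, your argument is insensitive to the sign of $w_1^*$: the paper's coordinate comparisons ($c_{i',1}w_1^* \geq (\sigma_1-\rho)w_1^*$ and $(\sigma_1+\rho)w_1^* \geq c_{i,1}w_1^*$) are valid only when $w_1^* \geq 0$, an assumption never made explicit there. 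Third, the paper's penultimate step applies the bound \eqref{eqn:marg-diff-bound}, which concerns the full inner product $\c_i^T\w^* + t^* - 1$, to the truncated quantity $c_{i,1}w_1^* + t^* - 1$; accounting rigorously for the discarded tail term would cost a further $\rho(1+\xi)$, a subtlety your vector-level argument avoids entirely. Your handling of the denominator replacement is also right: since $\sigma - \rho \leq \sigma_1 - \rho$, the subtracted fraction only grows, so the $\sigma_1$-bound implies the $\sigma$-bound.
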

\begin{proof}
  Again we prove this for $\c$-points only because the proof for the $\d$-points is analogous.
  
  Let $i$ be some other index in $J$. We may use Lemma~\ref{lem:wstar-bound} and \eqref{eqn:marg-diff-bound} in Lemma~\ref{lem:theta-bound} to write
  \begin{align*}
    \c_{i'}^T\w^* + t^* - 1 &= c_{i',1}w_1^* + \c(2:d)^T\w^*(2:d) + t^* - 1\\
                            &\geq (\sigma_1 - \rho)\w_1^* - \lVert\c(2:d)\rVert\cdot\lVert \w^*(2:d)\rVert + t^* - 1\\
                            &\geq (\sigma_1 - \rho)\w_1^* - \rho(1+\xi) + t^* - 1\\
                            &= (\sigma_1 + \rho)w_1^* + t^* - 1 - \rho(1+\xi) - 2\rho w_1^*\\
                            &\geq c_{i,1}w_1^* + t^* - 1 - \rho(1 + \xi) - 2\rho w_1^*\\
                            &\geq -\frac{3(1+\xi + 64\nu)}{128(\sigma_1 - \rho)} - \rho(1+\xi) - 2\rho w_1^*\\
                            &\geq -\frac{3(1+\xi + 64\nu)}{128(\sigma - \rho)} - 3\rho(1+\xi).
  \end{align*}
\end{proof}
Define $K(\rho,\nu) := 3(1+\xi + 64\nu)/(128(\sigma - \rho)) + 3\rho(1+\xi)$. Observe that as $\rho,\nu$ tend to zero, the right hand side in the inequality
$K(\rho,\nu) \to 3/128\sigma^2 \leq 3/32 \leq 1/2 - \Omega(1)$. Hence, for each $i \in J$, a sufficiently small choice of $\rho,\nu$ will give $\c_i^T\w^* + t^* \geq 1 - K$ for
$K \leq 1/2 - \Omega(1)$, and by Lemma~\ref{lem:u-bounded-away}, $u_i^* \leq \gamma - \Omega(1)$. Likewise for $v_i^*$ where $i \in L$.

\subsubsection{Identifying the well-classified points in FISTA} \label{sec:svm-wc-points-fista}
By Corollary~\ref{cor:theta-bound} and Lemma~\ref{lem:u-bounded-away}, for $\rho,\nu$ sufficiently small per \eqref{ass:rho-nu}, the well-classified points at the optimum
satisfy $u_i^*, v_i^* \leq 2K(\rho,\nu)\gamma$.
\begin{theorem}
  Suppose $\rho,\nu$ satisfy \eqref{ass:rho-nu}, and $K(\rho,\nu) < 1/2$. For all $i \in J$, $u_i^* \leq 2K(\rho,\nu)\gamma$
  and for all $i \in L$, $v_i^* \leq 2K(\rho,\nu)\gamma$.
\end{theorem}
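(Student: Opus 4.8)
The plan is to recognize this statement as a direct assembly of two results already established: the margin lower bound of Corollary~\ref{cor:theta-bound} and the conversion from margin to dual value in Lemma~\ref{lem:u-bounded-away}. The first step I would take is to rewrite Corollary~\ref{cor:theta-bound} compactly using the definition $K(\rho,\nu) := 3(1+\xi+64\nu)/(128(\sigma-\rho)) + 3\rho(1+\xi)$. With this notation, the corollary says precisely that for every $i\in J$,
\[
  \c_i^T\w^* + t^* \geq 1 - K(\rho,\nu),
\]
and symmetrically $-\d_i^T\w^* - t^* \geq 1 - K(\rho,\nu)$ for every $i\in L$. The hypotheses of the theorem, namely that $\rho,\nu$ satisfy \eqref{ass:rho-nu}, are exactly what was needed for Corollary~\ref{cor:theta-bound} to hold, so these margin bounds are available without further work.

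The second step is to check that $K(\rho,\nu)$ lies in the interval $[0,1/2)$ demanded by Lemma~\ref{lem:u-bounded-away}. Nonnegativity is immediate, since each summand defining $K(\rho,\nu)$ is nonnegative under the standing assumptions $\rho<\sigma$ (so $\sigma-\rho>0$), $\xi\geq 0$, and $\rho,\nu>0$; the strict upper bound $K(\rho,\nu)<1/2$ is the remaining hypothesis of the theorem. With $K := K(\rho,\nu) \in [0,1/2)$ verified, I would then invoke Lemma~\ref{lem:u-bounded-away} with this value of $K$ applied to each $i\in J$, which yields $u_i^* \leq 2K(\rho,\nu)\gamma < \gamma$, and applied to each $i\in L$, which yields $v_i^* \leq 2K(\rho,\nu)\gamma < \gamma$. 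This is exactly the claimed bound, and as a byproduct the strict inequality $<\gamma$ re-certifies that the well-classified points are properly classified at the optimum.

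I do not expect any substantive obstacle here: all the analytic effort — the norm bound on $\w^*$ in Lemma~\ref{lem:wstar-bound}, the stationarity/derivative argument in Lemma~\ref{lem:theta-bound}, and the geometric consolidation in Corollary~\ref{cor:theta-bound} — has already been carried out. The only point requiring care is confirming that the value $K(\rho,\nu)$ supplied to Lemma~\ref{lem:u-bounded-away} genuinely falls in $[0,1/2)$; this is guaranteed by pairing the nonnegativity of its defining terms with the explicit hypothesis $K(\rho,\nu)<1/2$. The $\d$-class statement for $v_i^*$ follows by the identical argument with the symmetric bounds, so no separate treatment is needed.
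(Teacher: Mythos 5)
Your proposal is correct and matches the paper's proof, which is exactly the one-line observation that the theorem follows directly from Corollary~\ref{cor:theta-bound} and Lemma~\ref{lem:u-bounded-away}. Your additional verification that $K(\rho,\nu)$ genuinely lies in $[0,1/2)$ before invoking Lemma~\ref{lem:u-bounded-away} is a careful touch the paper leaves implicit, but the argument is the same.
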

\begin{proof}
  This follows directly from Corollary~\ref{cor:theta-bound} and Lemma~\ref{lem:u-bounded-away}.
\end{proof}
If the data are such that $K(\rho,\nu) < 1/2$, then $\gamma - u_i^* \geq (1-2K(\rho,\nu))\gamma := \Delta_0$ for all $i \in J$ and $\gamma - v_i^* \geq \Delta_0$ for all $i \in L$. By Lemma~\ref{lem:classified-characterization-1},
when $\lVert(\u_{k+1},\v_{k+1}) - (\u^*,\v^*)\rVert < \Delta_0/2$, it must be the case that $\gamma - u_{k+1,i} > \Delta_0/2$ for all $i \in J$ and $\gamma - v_{k+1,i} > \Delta_0/2$
for all $i \in L$. But then we are in the case of Lemma~\ref{lem:classified-characterization-2}, i.e.
$\lVert(\u_{k+1}, \v_{k+1}) - (\u^*,\v^*)\rVert < \Delta_0/2$, and $\gamma - u_{k+1,i} > \Delta_0/2$ for all $i \in J$ and $\gamma - v_{k+1,i} > \Delta_0/2$ for all $i \in L$.

In the iterations of FISTA, we don't have access to $(\u^*,\v^*)$, however we do have the bound $\lVert(\u_{k+1}, \v_{k+1}) - (\u^*,\v^*)\rVert \leq \frac{2L}{\mu}\lVert(\u_k,\v_k) - (\u_{k+1},\v_{k+1})\rVert$ from Theorem~\ref{thm:dist-to-opt-dual}.
If $\lVert(\u_k,\v_k) - (\u_{k+1}, \v_{k+1})\rVert < \Delta_0\mu/4L$, we can be sure that $\lVert(\u_{k+1}, \v_{k+1}) - (\u^*,\v^*)\rVert < \Delta_0/2$ as desired.
It follows that we can find all the well-classified points in FISTA by checking the condition $\lVert(\u_k,\v_k) - (\u_{k+1},\v_{k+1})\rVert < \Delta_0\mu/4L$, and once this condition is
satisfied, finding all points such that $\gamma - u_{k+1,i} > \Delta_0/2$ and $\gamma - v_{k+1,i} > \Delta_0/2$. These points are properly classified
and must include the well-classified points given by the sets $J$ and $L$. Since the projected gradient is computed on each FISTA
iteration, it is efficient to compute $\lVert(\u_k,\v_k) - (\u_{k+1},\v_{k+1})\rVert$ on each iteration.

\subsection{Obtaining a hyperplane that separates the well-classified points} \label{sec:svm-hyperplane-separation}
So far we have shown how to determine which points are well-classified from the dual problem \eqref{prob:sm-svm-dual-p}, but it remains
to show how to obtain a hyperplane from an approximate dual solution that separates the well-classified $\c$-points from the well-classified $\d$-points.
In this section we make the assumption that $c_i^T\w^* + t^* \geq 1 - K$ for all $i \in J$ and $-d_i^T\w^* - t^* \geq 1 - K$ for all $i \in L$, where
$K := K(\rho,\nu) \in [0,1/2)$ and $\bar{K} \in [1/2,1)$ is chosen such that
\begin{equation} \label{eqn:K-conds}
  \bar{K} - 3K^2 > 3\rho\left(1+\xi\right)\left(\frac{4}{3} + 2\nu\right) + 3\nu\left(\frac{9}{4} + 2\xi\right) + \frac{1}{2}.
\end{equation}
The above requires that $\rho,\nu$ are sufficiently small. Note that the right hand side above tends to $1/2$ as $\rho,\nu \to 0$.

Suppose we stop early with $\lVert(\u_k,\v_k) - (\u^*,\v^*)\rVert \leq \Delta$, where
\begin{equation} \label{eqn:Delta}
  \Delta := \frac{\frac{1}{3}(\bar{K} - 3K^2) - \rho(1 + \xi)\left(\frac{4}{3} + 2\nu\right) - \nu\left(\frac{9}{4} + 2\xi\right) - \frac{1}{6}}{4\sqrt{2n}\left(1 + \frac{2}{3}\rho\right)},
\end{equation}
and we compute $\w_k := C^T\u_k - D^T\v_k$. We take $t_k$ to be the optimal solution to the unconstrained optimization problem
\begin{equation} \label{prob:tprob}
  \min_{t} f(\w_k,t),
\end{equation}
Since $f(\w_k,t)$ is the sum of convex functions, it is convex. Moreover, $\psi(\theta)$ is Lipschitz continuous.
This allows us to show our main result, that upon stopping early, we obtain a hyperplane from $(\w_k,t_k)$ that separates the well-classified points with a margin of $1-\bar{K}$.
\begin{theorem} \label{thm:svm-separation-theorem}
  Let $\bar{K}$ and $K$ be given according to \eqref{eqn:K-conds}. Suppose $\lVert(\u_k,\v_k) - (\u^*,\v^*)\rVert \leq \Delta$, $\w_k := C^T\u_k - D^T\v_k$, and $t_k$ is the solution to \eqref{prob:tprob}.
  Then $(\w_k,t_k)$ encode a separating hyperplane for the well-classified points, in particular
  \begin{align*}
    \c_i^T\w_k + t_k \geq 1 - \bar{K},\quad\forall i \in J,\\
    \d_i^T\w_k + t_k \leq -1 + \bar{K},\quad\forall i \in L.
  \end{align*}
\end{theorem}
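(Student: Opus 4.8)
The plan is to show that the margin of each well-classified point at the re-optimized hyperplane $(\w_k,t_k)$ degrades from its value at the optimum $(\w^*,t^*)$ by no more than $\bar K-K$, so that the optimal-margin bound $\c_i^T\w^*+t^*\ge 1-K$ of Corollary~\ref{cor:theta-bound} weakens only to $1-\bar K$. Concretely, for $i\in J$ I would write
\[
  \c_i^T\w_k+t_k=(\c_i^T\w^*+t^*)+\c_i^T(\w_k-\w^*)+(t_k-t^*)\ge (1-K)-\norm{\c_i}\,\norm{\w_k-\w^*}+(t_k-t^*),
\]
and use the symmetric identity for $-\d_i^T\w_k-t_k$ on the $\d$-class. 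The whole statement then reduces to two quantitative estimates: a bound on $\norm{\w_k-\w^*}$, and a two-sided bound on the threshold drift $t_k-t^*$.

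The first estimate is routine. The KKT stationarity relation $\w=C^T\u-D^T\v=H^T(\u,\v)$ in \eqref{eqn:svm-kkt} expresses both $\w_k$ and $\w^*$ as the same linear image of their dual variables, so $\w_k-\w^*=H^T\big((\u_k,\v_k)-(\u^*,\v^*)\big)$ and hence $\norm{\w_k-\w^*}\le\norm{H}\,\Delta\le\sqrt{n}\,R\,\Delta$, using $\norm{H}^2\le nR^2$ from the proof of Theorem~\ref{thm:L-smooth-g} and the stopping hypothesis $\norm{(\u_k,\v_k)-(\u^*,\v^*)}\le\Delta$. Since the data model gives $R=\max(\sigma_1,\sigma_2)+\rho$ and $\norm{\c_i}\le\sigma_1+\rho$ for well-classified $\c_i$, the $\w$-contribution to the margin perturbation is fully controlled by $\Delta$ and the geometric constants $\rho,\nu,\xi$.

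The crux, and the step I expect to be the \emph{main obstacle}, is the two-sided control of $t_k-t^*$. Because $t_k$ and $t^*$ minimize the convex, piecewise-quadratic maps $t\mapsto f(\w_k,t)$ and $t\mapsto f(\w^*,t)$, each satisfies the balance optimality condition $\sum_{i}\psi'(\c_i^T\w+t)=\sum_{i}\psi'(-\d_i^T\w-t)$ with $\psi'$ as in \eqref{eqn:psi-prime}. The difficulty is that $t\mapsto f(\w_k,t)$ is not strongly convex—$\psi$ is flat beyond the margin and linear below it—so one cannot simply invert a gradient bound to pass from a small balance residual to a small $|t_k-t^*|$. I would resolve this using the cluster structure: the $\ge n/3$ well-classified points of each class have nearly identical margins (differing only by $\O(\rho\norm{\w_k})$ since they lie in balls of radius $\rho$), while at most $\nu n$ noisy points contribute to the balance, and $w_{k,1}$ stays bounded below (from $w_1^*\gtrsim 1-K$, obtained by summing the $\c$- and $\d$-margin bounds, together with $\norm{\w_k-\w^*}\le\sqrt n R\Delta$). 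If $t_k$ drifted far enough below $t^*$ to violate the well-classified $\c$-points, all $\ge n/3$ of them would saturate $\psi'=-64/n$ and push the total $\c$-side charge above $\tfrac{64}{3}$; but a small $t_k$ simultaneously over-satisfies the well-classified $\d$-points (their margins $-\d_i^T\w_k-t_k$ grow), so the $\d$-side charge cannot exceed $64\nu$, contradicting balance once $\rho,\nu$ lie below the threshold encoded in \eqref{eqn:K-conds}. The symmetric argument rules out $t_k$ drifting too far above $t^*$, and quantifying this trade-off confines $t_k-t^*$ to an interval whose width is governed by $\norm{\w_k-\w^*}$ and by $\rho,\nu$; this is where the $K^2$ and the $\rho,\nu$ correction terms enter.

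Finally I would close the chain. Substituting $\norm{\w_k-\w^*}\le\sqrt n R\Delta$ and the resulting bound on $|t_k-t^*|$ into the margin identity, the definition of $\Delta$ in \eqref{eqn:Delta} and the standing hypothesis \eqref{eqn:K-conds} are calibrated precisely so that the combined perturbation $\norm{\c_i}\,\norm{\w_k-\w^*}+|t_k-t^*|$ is at most $\bar K-K$; indeed the numerator of \eqref{eqn:Delta} is exactly one third of the positive gap in \eqref{eqn:K-conds}, which makes the two inequalities telescope. This yields $\c_i^T\w_k+t_k\ge 1-\bar K$ for every $i\in J$, and the identical computation on the $\d$-class gives $\d_i^T\w_k+t_k\le -1+\bar K$ for every $i\in L$, completing the proof.
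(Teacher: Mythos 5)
Your proposal is correct in outline, but it takes a genuinely different route from the paper, and the difference is worth spelling out. The paper never estimates $t_k-t^*$ at all: it uses the definition of $t_k$ as the minimizer of \eqref{prob:tprob} only through the one-line inequality $f(\w_k,t_k)\le f(\w_k,t^*)$, transfers $f(\w_k,t^*)$ to the optimal value $f(\w^*,t^*)$ via the Lipschitz continuity of $\psi$ (constant $\gamma$, Lemma~\ref{lem:L-psi}) together with the bound $\lVert\w_k-\w^*\rVert\le(\sqrt j+\sqrt l)R\Delta$, bounds $f(\w^*,t^*)$ from the data model, and then uses the cluster structure to absorb the whole objective into $j_0$ copies of the single term $\psi(\c_{i_c}^T\w_k+t_k)$; since $\psi$ is decreasing, the resulting per-point bound $\psi(\c_{i_c}^T\w_k+t_k)\le\gamma(\bar K-\tfrac14)$ inverts to the margin bound. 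This completely sidesteps what you correctly identify as the main obstacle. Your route instead confronts the threshold drift through the balance condition $\sum_{i=1}^j\psi'(\c_i^T\w_k+t_k)=\sum_{i=1}^l\psi'(-\d_i^T\w_k-t_k)$, and your saturation argument is sound when run as a contrapositive: if some $i_0\in J$ had $\c_{i_0}^T\w_k+t_k<1-\bar K$, then every $i\in J$ has margin below $1-\bar K+2\rho\lVert\w_k\rVert\le\tfrac12$, forcing the $\c$-side below $-\tfrac{64}{3}$, while the implied drift $t^*-t_k>\bar K-K-\epsilon_w$ (with $\epsilon_w:=R\lVert\w_k-\w^*\rVert$) pushes every well-classified $\d$-margin above $1+\bar K-2K-2\epsilon_w\ge 1$, so the $\d$-side is at least $-64\nu>-\tfrac{64}{3}$ by \eqref{ass:rho-nu} --- a contradiction. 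What your route buys is robustness: it only needs an (approximately) vanishing balance residual, which matches the bisection computation of $t_k$; what the paper's route buys is brevity, since all constants are tracked through a single objective-value chain.

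Two caveats on your sketch. First, your framing ``two-sided bound on $t_k-t^*$'' is slightly misleading: the drift is only bounded by order-one quantities such as $\bar K-K$ (and when every point sits in the flat part of $\psi$ the minimizer of \eqref{prob:tprob} is a whole interval), but that coarse bound is all your margin decomposition needs, and your contradiction mechanism delivers exactly it. Second, and more substantively, your claim that \eqref{eqn:Delta} and \eqref{eqn:K-conds} are ``calibrated precisely'' for your argument is asserted, not proved: those constants were tuned for the paper's function-value computation, and they enter your argument in an entirely different way, through the inequalities $\bar K\ge 2K+2\epsilon_w$, $\bar K\ge\tfrac12+2\rho\lVert\w_k\rVert$, and $\nu<\tfrac13$. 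They do appear to check out --- \eqref{eqn:K-conds} gives $\bar K-2K>3K^2-2K+\tfrac12\ge\tfrac16$; the numerator of \eqref{eqn:Delta} is at most $\tfrac13\bar K-K^2-\tfrac16$, whence $2\epsilon_w\le\tfrac23\bar K-2K^2-\tfrac13\le\bar K-2K$ using $\bar K\ge\tfrac12$ and $2(K-\tfrac12)^2\ge 0$; and the $3\rho(1+\xi)\cdot\tfrac43$ term in \eqref{eqn:K-conds} covers $2\rho\lVert\w_k\rVert$ --- but this constant-tracking is the actual content of the proof and is missing from your sketch.
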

\begin{proof}
  See Theorem~\ref{thm:svm-hyperplane-separation} in Section~\ref{app:svm-hyperplane-separation} of the appendix.
\end{proof}

\section{Implementation and numerical experiments} \label{sec:numerics}
\subsection{Ellipsoid Separation Problem} \label{sec:esp-numerics}
The FISTA update for the ESP formulation \eqref{prob:socp-esp-dual2} is
\begin{align*}
  &\x_{k+1} = \proj_{\mC}\left(\y_k + \tfrac{1}{L}\mA^T(\b - \mA\y_k)\right)\\
  &t_{k+1} = \frac{1+\sqrt{1+4t_k^2}}{2}\\
  &\y_{k+1} = \x_{k+1} + \left(\tfrac{t_k - 1}{t_{k+1}}\right)\left(\x_{k+1} - \x_k\right)\\
  &\v_{k+1} = \b - \mA\x_{k+1},
\end{align*}
where $\x_0$ is the chosen starting point, $\y_0 = \x_0$, $t_0 = 1$, and $L = \lVert\mA\rVert^2_2$. Since at every iteration we compute
$\v_{k+1} = (s_{k+1}, t_{k+1}, \w_{k+1})$, it is only a small amount of extra work at the end of each iteration to check if the hyperplane
$\{\z \in \R^d : \w_{k+1}^T\z = m_{k+1} = (-s_{k+1} + t_{k+1})/2\}$ separates $C_1\cup\hdots\cup C_j$ from $D_1 \cup \hdots \cup D_l$ by verifying
that
\begin{equation} \label{eqn:esp-termination-test}
\begin{split}
  &\c_i^T\w_{k+1} + \lVert A_i^T\w_{k+1}\rVert < m_{k+1},\quad\forall i \in [j]\\
  &\d_i^T\w_{k+1} - \lVert B_i^T\w_{k+1}\rVert > m_{k+1},\quad\forall i \in [l],\\[1em]
\text{or that}\ &\\[1em]
  &\c_i^T\w_{k+1} - \lVert A_i^T\w_{k+1}\rVert > m_{k+1},\quad\forall i \in [j]\\
  &\d_i^T\w_{k+1} + \lVert B_i^T\w_{k+1}\rVert < m_{k+1},\quad\forall i \in [l].
\end{split}
\end{equation}
By Lemma~\ref{lem:ellipsoid-soc}, if either of these conditions succeed, $\{\z \in \R^d : \w_{k+1}^T\z = m_{k+1}\}$ is a separating hyperplane. Moreover, one of these conditions is guaranteed to be satisfied once \eqref{eqn:k-esp} is satisfied.

We tested our Julia implementation of FISTA for the ESP by separating ellipsoids derived from the digits $0$ and $1$ of the MNIST handwritten digits dataset \cite{MNIST}.
A single digit from the MNIST dataset is a $28\times 28$ matrix with entries in $[0,255]$ representing pixel intensity, where
$255$ corresponds to a completely black pixel and $0$ corresponds to a completely white pixel. In order to model an MNIST digit $\x$
as an ellipsoid, we first flatten $\x$ into a vector in $\R^{784}$ and normalize the entries to lie in $[0,1]$. An ellipsoid is then constructed using $\x$ as the center:
\[
  E_{\x} = \left\{ \z \in \R^{784} : \Sigma_{\x}^{-1}(\z - \x)\right\},
\]
where
\[
  \Sigma_{\x} = \diag(\eps\e + \alpha(\e - \btheta)),
\]
and $\theta_i$ represents the ``confidence" in a pixel being black or white:
\[
  \theta_i = \lvert 2x_i - 1\rvert.
\]
This model of an MNIST digit as an ellipsoid ensures that the ellipsoid stretches more in directions where the pixel
color (black or white) becomes more uncertain, and contracts where confidence in the pixel color increases.

In our numerical experiments, we randomly select $N$ digits from each class ($0$ or $1$),
varying $N$ from $1$ to $125$. For each digit, we construct an ellipsoid using the process
described above, with parameters $\epsilon = 10^{-10}$ and $\alpha = 1/2$.
Before each experiment, the data are normalized according to \eqref{eqn:normalization-assumption}.
The results of these experiments can be seen in Table~\ref{tab:esp-results1}, which compares our method for solving the ellipsoid separation problem by early stopping to using FISTA with tolerance-based stopping: $\lVert\y_k - \proj_{\mC}(\x + \frac{1}{L}\mA^T\v_k)\rVert \leq \texttt{tol}$, where $\texttt{tol} = 10^{-6}$.
We also performed the same experiments on various unbalanced sets of ellipsoids, the results of which are
listed in Appendix~\ref{app:esp-results2}, Table~\ref{tab:esp-results2}.

To study how the proposed stopping condition behaves as the sets of ellipsoids become better separated, we consider a simple two-dimensional ellipsoid separation problem. We first sample two random positive definite matrices $A_0,A_1 \in \R^{2\times 2}$ of the form $A_j = \omega Q_jQ_j^T + \epsilon I$, where $Q_j$ has i.i.d. standard normal entries, $\omega = 0.1$, and $\epsilon = 10^{-3}$. For a given distance $d$, we then define two ellipsoids $E_0(d)$ and $E_1(d)$ with centers $c_0 = (-d,0)$ and $c_1 = (d,0)$, and fixed shape matrices $A_0$ and $A_1$. These ellipsoids are normalized by dividing both centers and shape matrices by a common factor so that the assumption \eqref{eqn:normalization-assumption} is satisfied. For each $d \in \{0.01,0.02,\hdots,1.0\}$ we run FISTA on the corresponding ESP instance, starting from $\x_0 = \bz$, using the proposed stopping condition with a maximum of 100 iterations. The results of this experiment are shown in Figure~\ref{fig:ellipsoid_distance_vs_iterations}.

\begin{table}[htbp] 
\centering
\caption{Comparing the number of iterations and time taken before finding a separating hyperplane in balanced data between early-stopping and tolerance-based stopping ($\texttt{tol} = 10^{-6}$).}
\label{tab:esp-results1}
\resizebox{\textwidth}{!}{%
\begin{tabular}{cc|cc|cc}
\toprule
\textbf{Ellipsoids per class $\bm{(N)}$} & \textbf{Columns in $\bm{\mA}$} &
\multicolumn{2}{c}{\textbf{FISTA (early-stopping)}} & \multicolumn{2}{c}{\textbf{FISTA (tolerance-based)}} \\
\cmidrule(lr){3-4} \cmidrule(l){5-6} & & \textbf{iterations} & \textbf{time(s)} & \textbf{iterations} & \textbf{time(s)} \\\midrule
1 & 1570 & 1 & 0.172 & 531 & 0.372 \\
5 & 7850 & 1 & 0.180 & 887 & 3.355 \\
10 & 15700 & 2 & 0.193 & 988 & 7.522 \\
25 & 39250 & 2 & 0.223 & 1374 & 26.662 \\
50 & 78500 & 3 & 0.300 & 1627 & 66.916 \\
75 & 117750 & 7 & 0.779 & 1686 & 118.945 \\
100 & 157000 & 6 & 0.685 & 1818 & 166.617 \\
125 & 196250 & 3 & 0.513 & 1856 & 256.162 \\
\bottomrule
\end{tabular}
}
\end{table}

\begin{figure}
\centering
\includegraphics[width=0.65\textwidth]{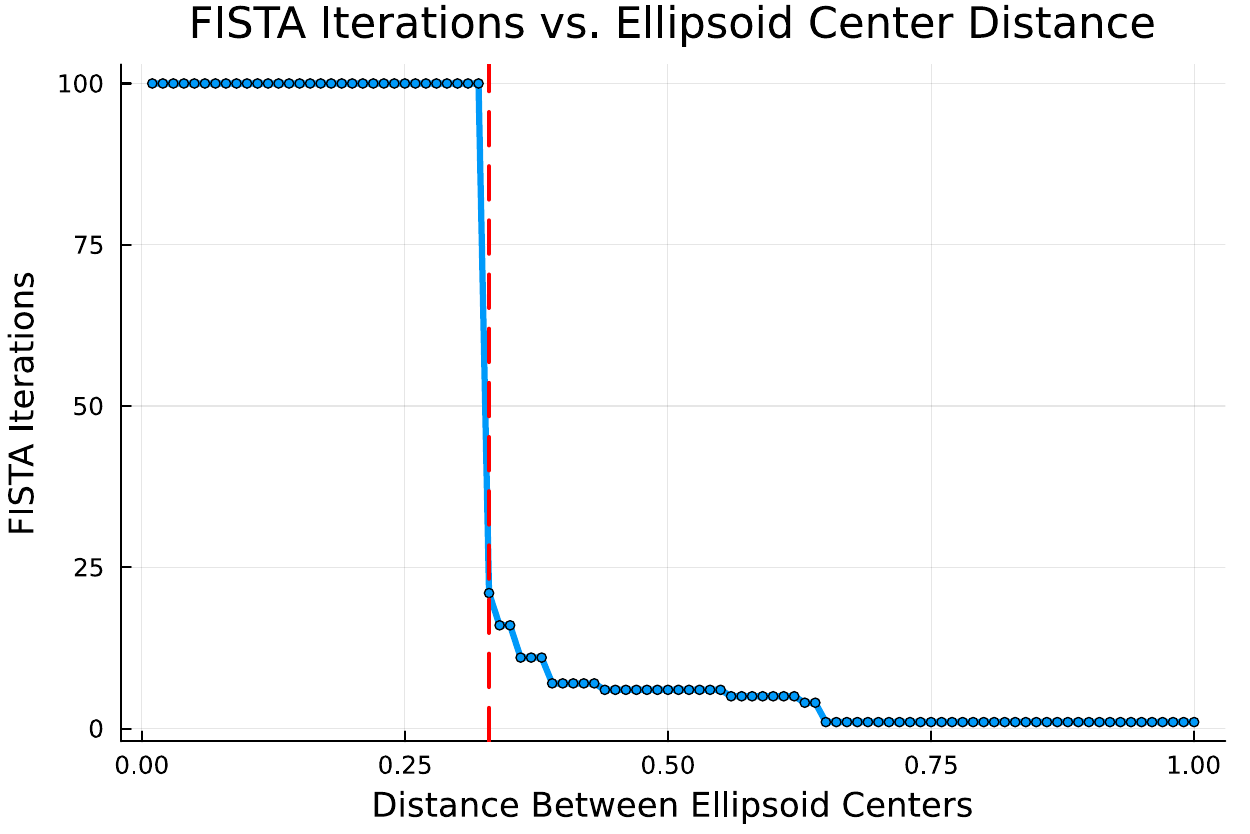}
\caption{Plot showing the number of iterations before obtaining a separating hyperplane vs the distance between the centers of two ellipsoids in two dimensions. A vertical red dashed line is used to mark the first distance at which the ESP was able to be solved.}
\label{fig:ellipsoid_distance_vs_iterations}
\end{figure}

\subsection{Support-Vector Machines} \label{sec:svm-numerics}
In order to compute an approximate separator for the SVM problem, we develop a stopping condition based on the results of Sections~\ref{sec:svm-p-dual} and \ref{sec:svm-wc-points}. In particular, from Theorem~\ref{thm:dist-to-opt-dual} $\lVert(\u_{k+1}, \v_{k+1}) - (\u^*,\v^*)\rVert \leq \frac{2L}{\mu}\lVert(\u_{k}, \v_{k}) - (\u_{k+1},\v_{k+1})\rVert$, so set
\[
    \Delta := \frac{2L}{\mu}\lVert(\u_{k}, \v_{k}) - (\u_{k+1},\v_{k+1})\rVert.
\]
Clearly, $\lVert(\u_{k+1}, \v_{k+1}) - (\u^*,\v^*)\rVert < \Delta$ always holds, so by Lemma~\ref{lem:classified-characterization-2}, if for any $i \in [j]$, $\gamma - u_{k+1,i} > \Delta$, then $u_i^*$ is properly classified. Similarly, if for any $i \in [l]$, $\gamma - v_{k+1,i} > \Delta$, then $v_i^*$ is properly classified. This means that throughout the FISTA iterations, we can keep a list $T$ of the properly classified points $i \in n$, and on each iteration check if any more points should be added to $T$. To determine when to stop, we let $p_{k+1}$ denote the percentage of points that have been properly classified at iteration $k+1$:
\[
    p_{k+1} = \frac{\lvert T\rvert}{n},
\]
and let $\delta_{\min} > 0$ be some parameter that represents the minimum acceptable percentage change in properly classified points. Initially, we set two variables $p_{\text{best}} = 0$ and $\texttt{stale} = 0$, then at each iteration we verify $p_{k+1} > p_{\text{best}} + \delta_{\min}$. If the inequality is false, we increment \texttt{stale} by one, and if $\texttt{stale} \geq 2$, we stop the algorithm because it has stalled. If the inequality is true, we update $p_{\text{best}}$ to $p_{k+1}$ and reset \texttt{stale} to zero.

In all, we perform the following steps to solve the SVM:
\begin{enumerate}
  \item Run FISTA on \eqref{prob:sm-svm-dual-p} with some choice of $\delta_{\min} > 0$ until it stalls as described above.
  \item Compute
    \[
      \w_{k+1} := C^T\u_{k+1} - D^T\v_{k+1}.
    \]
  \item Solve the unconstrained optimization problem
    \[
      t_{k+1} := \argmin_t f(\w_{k+1},t)
    \]
    by bisection.
\end{enumerate}
By our choice of stopping condition, our theory guarantees that this method will produce a separator $(\w_{k+1},t_{k+1})$ that separates all points in $T$.

We tested our implementation of this method on binary classification problems of various sizes chosen from the UCI Machine Learning repository \cite{UCI}.
Our numerical experiments used $\delta_{\min} = 10^{-4}$, and compared our FISTA implementation with three widely used SVM solvers: LIBSVM, LIBLINEAR, and Pegasos. All numerical experiments were written in Julia,
so for LIBSVM and LIBLINEAR we used the LIBSVM.jl\footnote{\url{https://github.com/JuliaML/LIBSVM.jl}} and LIBLINEAR.jl\footnote{\url{https://github.com/JuliaML/LIBLINEAR.jl}} interfaces respectively, under the package defaults and with a linear kernel for LIBSVM.
For Pegasos, we used the MLJ.jl library\footnote{\url{https://juliaai.github.io/MLJ.jl/stable/}} and set the maximum number of epochs to 500.
All other parameters for Pegasos were left at their defaults.

The results of the experiments are displayed in Table~\ref{tab:svm-results}, which show that
our method is competitive with LIBSVM and LIBLINEAR, and outperforms Pegasos. Notably, for the problem ``a9a", our method significantly
outperformed the other solvers in time, while still achieving the highest accuracy. In this paper, the following definition of accuracy is used, where $\hat{\y} \in \R^n$ is the vector of predictions for each point and $\y \in \R^n$ is the vector of true labels:
\[
    \text{classification accuracy} = \frac{\text{correct classifications}}{\text{total classifications}} = \frac{\lvert\{i : \hat{y}_i = y_i\}\rvert}{n}.
\]
Figure~\ref{fig:svm-plots} plots the quantity $\lVert\y_k - \proj_{\Omega}(\y_k - \frac{1}{L}\nabla g(\y_k))\rVert$ over the number of iterations
for each problem, where $\y_k$ is the auxiliary sequence of FISTA. The plots include a horizontal dashed red line, which
marks the value of $\lVert\y_k - \proj_{\Omega}(\y_k - \frac{1}{L}\nabla g(\y_k))\rVert$ at which we stop early and obtain an approximate separator.
These plots make it clear that the a good separator can often be found without having to solve the underlying optimization problem
to optimality.

\begin{figure}
\centering
\subfloat[breast-cancer]{
  \includegraphics[width=0.5\textwidth]{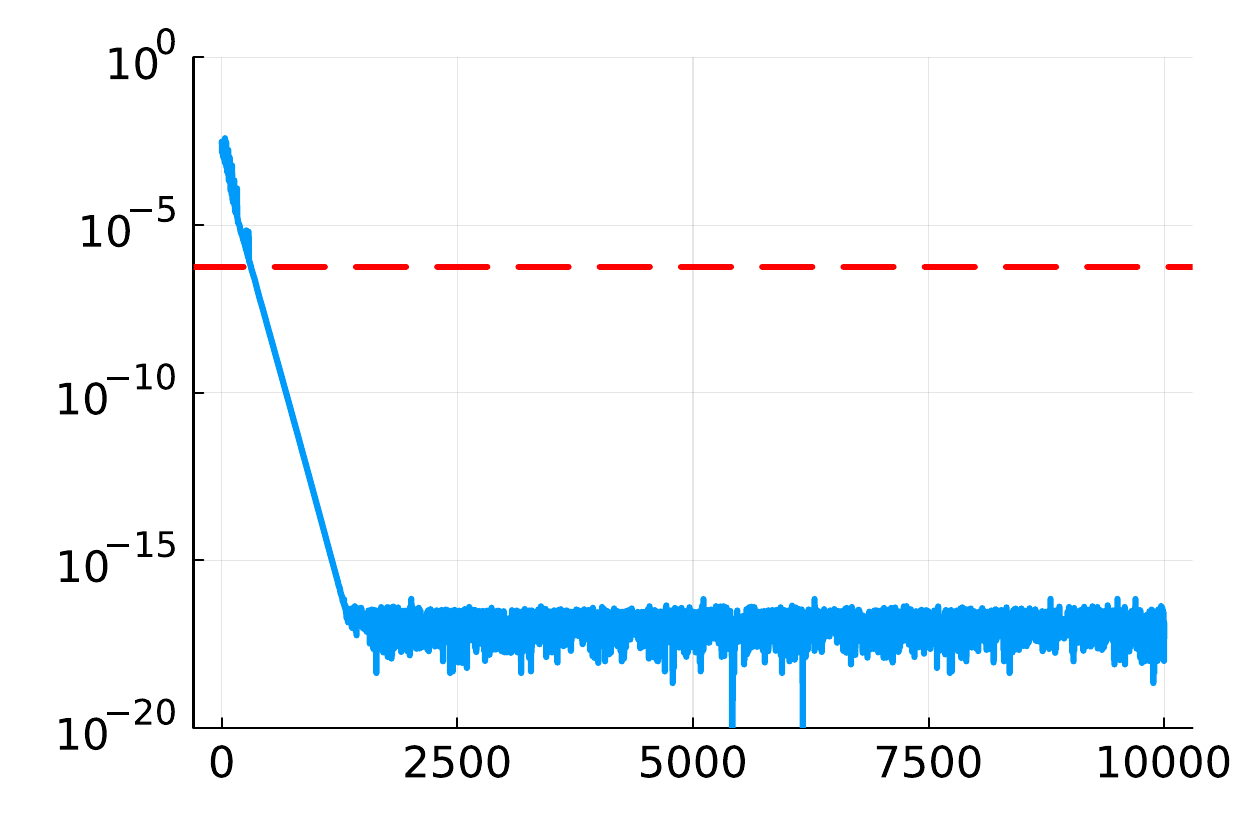}
}
\subfloat[australian]{
  \includegraphics[width=0.5\textwidth]{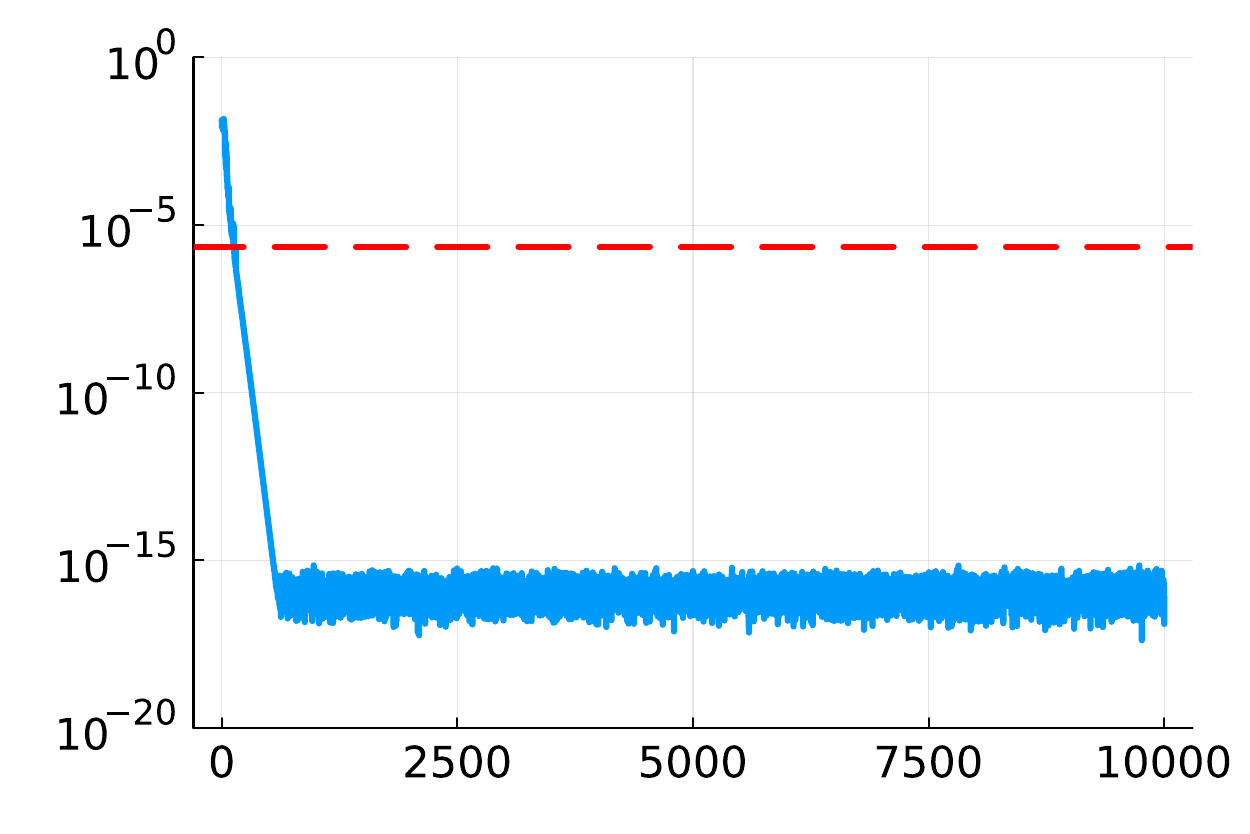}
}\\
\subfloat[a9a]{
  \includegraphics[width=0.5\textwidth]{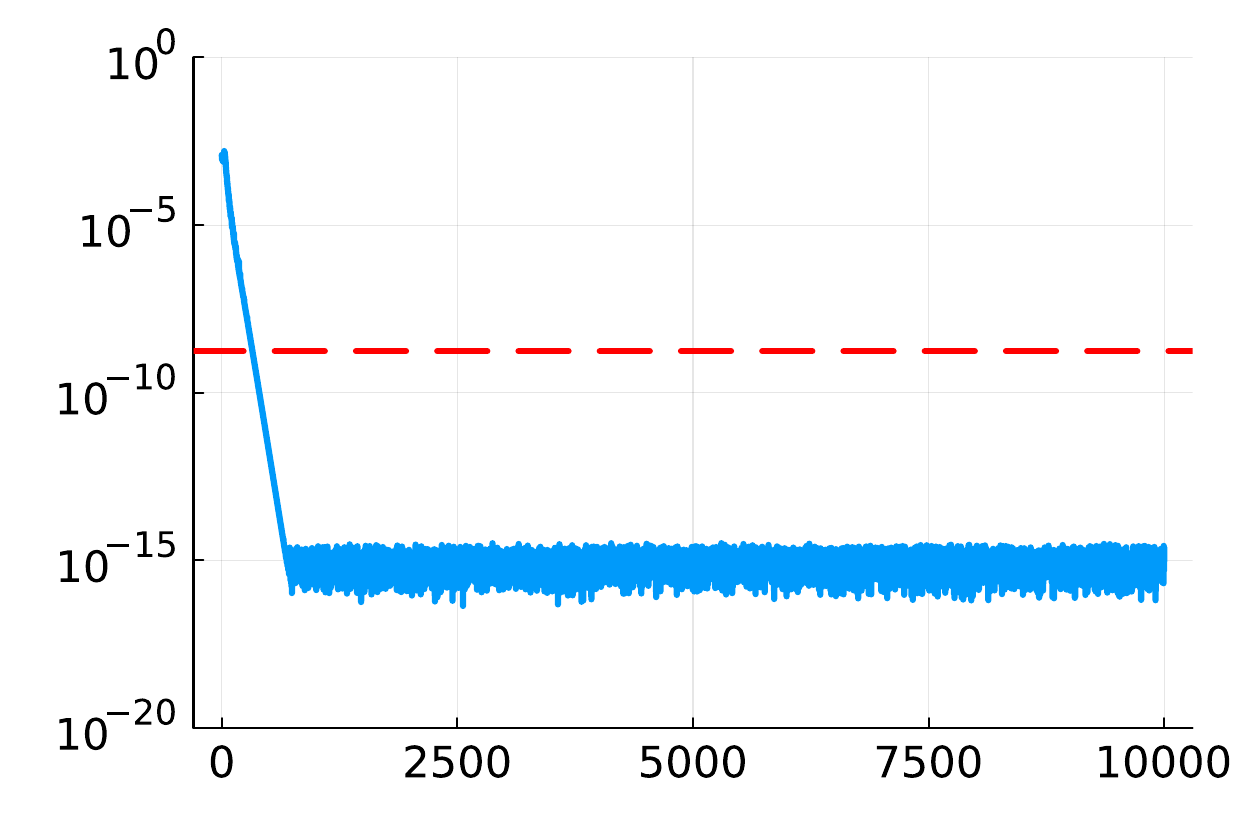}
}
\subfloat[flocking]{
  \includegraphics[width=0.5\textwidth]{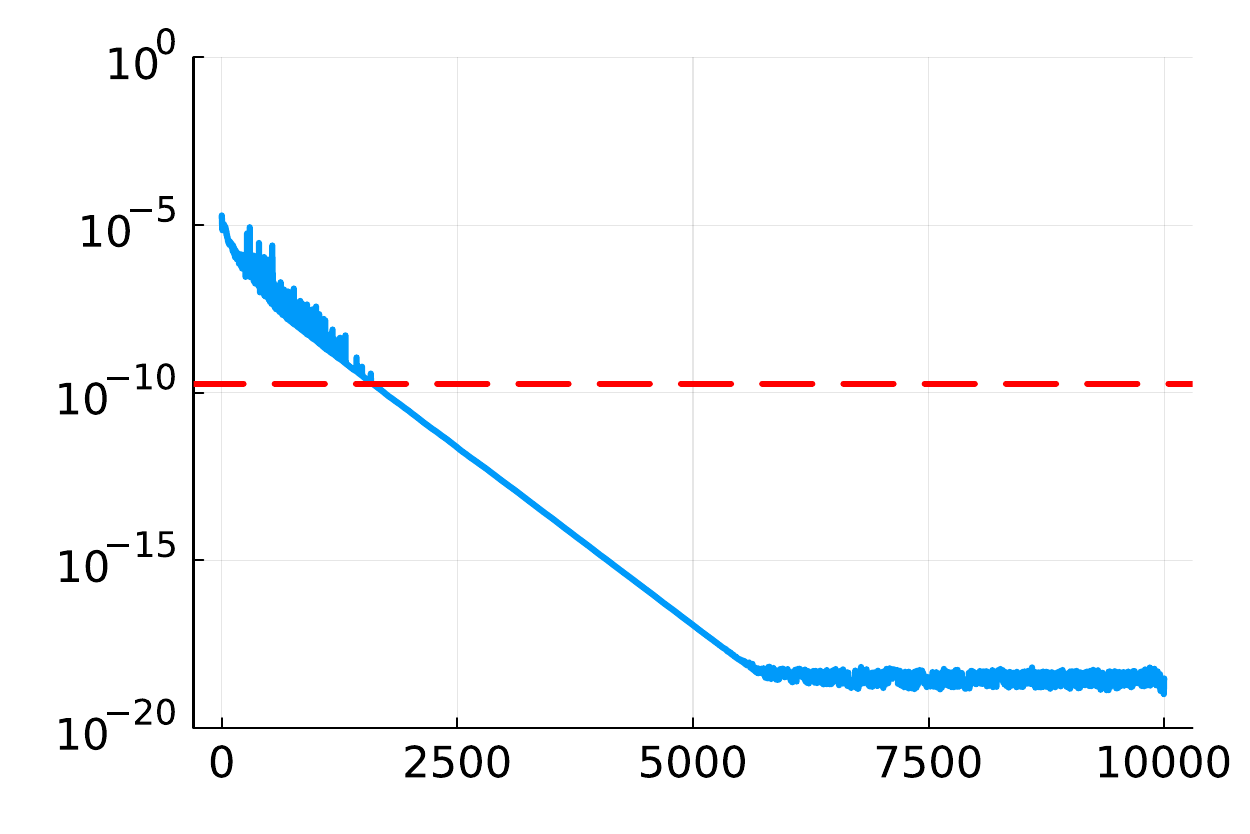}
}
\caption{Plots depicting $\lVert\y_k - \proj_{\Omega}(\y_k - \frac{1}{L}\nabla g(\y_k))\rVert$ for $k \in [0,10000]$.}
\label{fig:svm-plots}
\end{figure}

\begin{table}[ht]
\centering
\caption{Classification accuracy and runtime for each algorithm and dataset.}
\label{tab:svm-results}
\resizebox{\textwidth}{!}{%
\begin{tabular}{lrr|cc|cc|cc|cc} 
\toprule
\textbf{Datasets} & \textbf{Samples} & \textbf{Features} &
\multicolumn{2}{c|}{\textbf{FISTA}} &
\multicolumn{2}{c|}{\textbf{LIBSVM}} &
\multicolumn{2}{c|}{\textbf{LIBLINEAR}} & 
\multicolumn{2}{c}{\textbf{Pegasos}} \\
\cmidrule(r){4-11} 
& & & \textbf{accu.} & \textbf{time(s)} & \textbf{accu.} & \textbf{time(s)} & \textbf{accu.} & \textbf{time(s)} & \textbf{accu.} & \textbf{time(s)} \\
\midrule
breast cancer & 568 & 31 & 0.986 & 0.581 & 0.988 & 0.602 & 0.989 & 0.002 & 0.910 & 2.353 \\
australian & 589 & 14 & 0.858 & 0.577 & 0.856 & 0.029 & 0.872 & 0.005 & 0.779 & 0.350 \\
a9a & 22,696 & 123 & 0.847 & 1.919 & 0.847 & 146.434 & 0.741 & 2.695 & 0.786 & 30.992 \\
flocking & 24,014 & 2,400 & 0.999 & 46.005 & 0.999 & 38.174 & 1 & 7.867 & 0.795 & 677.247 \\
\bottomrule
\end{tabular}}
\end{table}

\vspace{4em}
\section{Conclusion}
We studied when FISTA applied to binary classification problems could be stopped early while still producing a useful classifier. For both the ellipsoid separation problem (ESP)
and soft-margin support-vector machine problem (SVM), we showed how from the iterates of FISTA applied to the underlying optimization problem, we could
extract a separator. Furthermore, we obtained data-dependent complexity results which showed that the effort to obtain a good separator scales with geometric properties of the data,
rather than parameters of the underlying optimization problem.

For the ESP, we reformulated the dual second-order cone program so that we could apply FISTA, and showed that the FISTA
residuals converge to the primal-dual hybrid gradient (PDHG) infimal displacement vector, which itself encodes a separating hyperplane.
We showed that the number of iterations sufficient to produce a separating hyperplane scales as $\O(\delta_{\mA}^{-2})$, where
$\delta_{\mA}$ is the smallest perturbation that destroys separability of the data. Our numerical experiments showed that as the classes of ellipsoids grew farther apart, the algorithm
needed fewer iterations to find a separator, corroborating our data-dependent complexity theory.

For SVMs, we proposed a strongly concave perturbation of the standard soft-margin SVM dual, which ensures unique minimizers, and with our parameter choices,
converges rapidly. Under a reasonable data model, we show that early-stopped iterates of FISTA applied to the perturbed dual identify
well-classified points and can be used to efficiently compute a hyperplane. We obtain a data-dependent upper bound on the error
$\lVert(\u_k,\v_k) - (\u^*,\v^*)\rVert$, after which the hyperplane computed from the FISTA iterates becomes a separating hyperplane
for the well-classified points. In our experiments, our method for obtaining an approximate separating hyperplane
performs well, and is competitive with LIBSVM and LIBLINEAR while outperforming Pegasos in runtime and accuracy.

Finally, we highlight two directions for future research. Specifically in the realm of binary classification, we selected parameter $\gamma=64/n$ because this parameter worked well in both our theoretical developments and computational experiments.  However, there could be data sets with either an exceptionally large or exceptionally small number of misclassified points in which another value for $\gamma$ might be appropriate.   Future research could extend our theory so that $\gamma$ is selected in a data dependent manner. Looking beyond binary classification, there are countless problems in data science that are solved by optimization, both convex and nonconvex, and, to the best of our knowledge, few if any have data dependent termination tests.  Therefore, much work remains to be done in this arena.

\bibliography{refs}
\bibliographystyle{plain}

\clearpage
\begin{appendices}
    \section{Detailed results and proofs of Section~\ref{sec:esp}} \label{app:esp-proofs}
    \EllipsoidSoc*
    \begin{proof}
      Suppose $\lVert A^T\w\rVert + \w^T\c \leq -s$. For any $\z \in E$, we get
      \begin{align*}
        \w^T\z &= \w^TAA^{-1}(\z-\c) + \w^T\c\\
               &= \lVert A^T\w\rVert\lVert A^{-1}(\z-\c)\rVert + \w^T\c\\
               &\leq \lVert A^T\w\rVert + \w^T\c\\
               &\leq -s,
      \end{align*}
      so $\z \in H$. Now suppose $E \subseteq H$ and let $\z := AA^T\w/\lVert A^T\w\rVert + \c$. Then $A^{-1}(\z - \c) = A^T\w/\lVert A^T\w\rVert$ and clearly $\z \in E \subseteq H$.
      It follows that
      \begin{align*}
        -s &\geq \w^T\z\\
           &= \w^TAA^{-1}(\z-\c) + \w^T\c\\
           &= \w^TAA^{-1}\left(AA^T\w/\lVert A^T\w\rVert\right) + \w^T\c\\
           &= \lVert A^T\w\rVert + \w^T\c.
      \end{align*}
      The proof that $E\subseteq\intr(H)$ if and only if the inequality is strict proceeds in the same way.
    \end{proof}
    
    \subsection{Related results from primal-dual hybrid gradient} \label{app:esp-pdhg}
    PDHG applies to the saddle-point problem
    \[
      \inf_{\x\in\R^n}\sup_{\y\in\R^m} \phi(\x) - \psi^*(\y) + \y^T\mA\x,
    \]
    where $\psi^*$ denotes the Fenchel conjugate of $\psi$. Specializing to the standard conic form problem $\min\{\c^T\x : \mA\x = \b, \x \in \mC\}$, we
    take $\phi(\x) := \c^T\x + i_{\mC}(\x)$ and $\psi^*(\y) := \b^T\y$, then the PDHG iteration for standard conic form is
    \begin{equation} \label{eqn:pdhg-iteration}
      \begin{pmatrix}
        \x^+\\
        \y^+
      \end{pmatrix}=	T	
      \begin{pmatrix}
        \x\\
        \y
      \end{pmatrix}
      :=	\begin{pmatrix}
        \proj_{\mC}(\x - \sigma \mA^T \y-\c)\\
        \y+\tau \mA (2\x^+-\x)-\tau \b
      \end{pmatrix},
    \end{equation}
    where $\sigma,\tau > 0$ are parameters of PDHG. We make the assumption that $\sigma < 1/\lVert \mA\rVert^2, \tau = 1$.
    If PDHG converges to $(\x,\y)$, then $(\x,\y) = T(\x,\y)$, i.e. $(\x,\y) \in \zer(\mathrm{Id} - T)$. If the problem is inconsistent, then
    $\bz\notin\mathrm{Range}(\mathrm{Id}-T)$ and we define the \textit{infimal displacement vector}
    \[
      \twovec{\v_R}{\v_D}=\argmin\left\{\Vert\v\Vert_M: \v\in\mathrm{cl}\left(\mathrm{Range}(\mathrm{Id}-T)\right)\right\},
    \]
    where
    \[
      M = \begin{pmatrix}
        \frac{1}{\sigma}I & -\mA^T\\
        -\mA & \frac{1}{\tau}I
      \end{pmatrix} = \begin{pmatrix}
        \frac{1}{\sigma}I & -\mA^T\\
        -\mA & I
      \end{pmatrix}.
    \]
    Note that $\mathrm{cl}\left(\mathrm{Range}(\mathrm{Id}-T)\right)$ is a closed nonempty convex set, so the argmin above has a unique solution.
    It is a well-known fact (see \cite{Reich1} or \cite{Reich2}) that if the dual is infeasible, then the iterates $(\x_k, \y_k)$ do not converge.
    Instead, $(\x_{k+1},\y_{k+1}) - (\x_k,\y_k) \to (\v_R,\v_D)$. We now recount some useful
    results from \cite{Jiang2023}.

    \begin{lemma} \label{lem:splust-equals-normvd2}
      $s^* + t^* = \lVert\v_D\rVert^2$.
    \end{lemma}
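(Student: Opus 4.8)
The plan is to recognize $\lVert\v_D\rVert^2$ as an inner product of $\v_D$ with a residual and then evaluate the two resulting terms using facts already in hand. The crucial structural observation is that $\b = (1,1,\bz)$ while $\v_D = (s^*,t^*,\w^*)$, so that $\b^T\v_D = s^* + t^*$; the lemma should therefore follow almost immediately once $\v_D$ is written as a residual at an optimizer.

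First I would apply Theorem~\ref{thm:pdhg-to-fista}(i) to obtain some $\bar{\x}\in\mC$ with $\b - \mA\bar{\x} = \v_D$. Substituting this identity into a single copy of $\v_D$ in the expression $\lVert\v_D\rVert^2 = \v_D^T\v_D$ gives
\[
  \lVert\v_D\rVert^2 = \v_D^T(\b - \mA\bar{\x}) = \v_D^T\b - \v_D^T\mA\bar{\x}.
\]
By the form of $\b$ and $\v_D$ noted above, the first term equals $s^* + t^*$, so it remains only to show that the cross term $\v_D^T\mA\bar{\x}$ vanishes.

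The vanishing of the cross term is exactly the orthogonality relation supplied by Theorem~\ref{thm:JMV2023}(iii), which was already invoked in this form ($\bar{\x}^T\mA^T\v_D = 0$) inside the proof of Theorem~\ref{thm:pdhg-to-fista}(ii). Alternatively, one can derive it directly: the proof of Theorem~\ref{thm:pdhg-to-fista}(ii) establishes $\mA^T\v_D\in\mN_{\mC}(\bar{\x})$, and since $\mC$ is a cone containing both $\bz$ and $2\bar{\x}$, testing the normal-cone inequality at these two points forces $\bar{\x}^T\mA^T\v_D = 0$. Either route gives $\v_D^T\mA\bar{\x} = 0$, and combining with the previous display yields $\lVert\v_D\rVert^2 = s^* + t^*$.

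The main obstacle here is organizational rather than analytic: everything reduces to the single orthogonality identity $\v_D^T\mA\bar{\x} = 0$. Since $\mA\bar{\x} = \b - \v_D$ is uniquely determined by $\v_D$ regardless of which optimizer $\bar{\x}$ is chosen, the cross term is well defined and the cited relation applies without any further hypotheses, so the argument closes after this single step.
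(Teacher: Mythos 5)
Your proof is correct, but it takes a different route from the paper's. The paper's proof is a one-line citation: it invokes \cite[Lemma 6.8 (viii)]{Jiang2023} for the identity $\lVert\v_D\rVert^2 = \b^T\v_D$ and then reads off $\b^T\v_D = s^* + t^*$ from $\b = (1,1,\bz)$. You instead derive that identity internally: Theorem~\ref{thm:pdhg-to-fista}(i) supplies $\bar{\x}\in\mC$ with $\b - \mA\bar{\x} = \v_D$, and Theorem~\ref{thm:JMV2023}(iii) kills the cross term $\v_D^T\mA\bar{\x}$, after which the same structural observation about $\b$ finishes the argument. What your approach buys is self-containment: the lemma follows from results already stated and used in the paper, rather than from an additional black-box import from the external reference (the hypotheses needed, namely $\mathrm{Null}(\mA)\cap\mC=\{\bz\}$, are exactly those the paper already asserts for the ESP, so nothing extra is assumed). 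One small caveat: your ``alternative'' derivation of the orthogonality via $\mA^T\v_D\in\mN_{\mC}(\bar{\x})$ is not genuinely independent, since the paper establishes that normal-cone membership in the proof of Theorem~\ref{thm:pdhg-to-fista}(ii) by invoking Theorem~\ref{thm:JMV2023}(iii) itself; so that remark is circular as a substitute for the citation, though harmless, because your primary route cites Theorem~\ref{thm:JMV2023}(iii) directly and that is all you need.
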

    \begin{proof}
      From \cite[Lemma 6.8 (viii)]{Jiang2023}, $\lVert\v_D\rVert^2 = \b^T\v_D$, and by definition $\b = (1,1,\bz)$, so
      \[
        s^* + t^* = \b^T\v_D = \lVert\v_D\rVert^2.
      \]
    \end{proof}
    
    \begin{theorem}[{\cite{Jiang2023}}] \label{thm:JMV2023}
      For the standard conic form problem $\min\{\c^T\x : \mA\x = \b, \x \in \mC\}$, we have the following:
      \begin{enumerate}[label=(\roman*)]
        \item $\mA^T\v_D \in \mC^{\circ}$.
        \item If $\mathrm{Null}(\mA)\cap\mC = \{\bz\}$, then $\v_R = \bz$.
        \item If $\mathrm{Null}(\mA)\cap\mC = \{\bz\}$, then $\v_D^T\mA\x = \bz$ for all $\x$ such that $\b - \mA\x = \v_D$.
        \item If $\c = \bz$ and $\mathrm{Null}(\mA)\cap\mC = \{\bz\}$, then $\v \in \mathrm{Range}(\mathrm{Id} - T)$, where $\v$
          is the infimal displacement vector.
      \end{enumerate}
    \end{theorem}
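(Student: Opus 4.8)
The plan is to read the PDHG map $T$ in \eqref{eqn:pdhg-iteration} through its standard monotone-operator interpretation: with $\tau=1$ and $\sigma<1/\norm{\mA}^2$ the matrix $M$ is positive definite, and $T$ is the proximal-point (resolvent) iteration, in the $M$-metric, for the maximal monotone operator encoding the KKT system of the conic program. This makes $T$ firmly nonexpansive in $\norm{\cdot}_M$, so $S:=\mathrm{cl}(\mathrm{Range}(\mathrm{Id}-T))$ is closed convex and $\v=(\v_R,\v_D)$ is its unique element of minimal $M$-norm, as recorded above. For any orbit $z_k=(\x_k,\y_k):=T^k z_0$ we are given $(\mathrm{Id}-T)z_k\to\v$; averaging (Ces\`aro) this telescoping sequence yields $z_k/k\to-\v$, i.e.\ $\x_k/k\to-\v_R$ and $\y_k/k\to-\v_D$. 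These linear-growth limits, together with the identity $\mN_{\mC}(\x)=\mC^{\circ}\cap\{\x\}^{\perp}$ valid for a closed convex cone $\mC$ and $\x\in\mC$, are the tools I would use throughout.

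For (i) I would use the variational characterization of the projection defining $\x_{k+1}=\proj_{\mC}(\x_k-\sigma\mA^T\y_k-\c)$: the residual $r_k:=(\x_k-\sigma\mA^T\y_k-\c)-\x_{k+1}=(\x_k-\x_{k+1})-\sigma\mA^T\y_k-\c$ lies in $\mN_{\mC}(\x_{k+1})\subseteq\mC^{\circ}$ for every $k$. Because $\mC^{\circ}$ is a closed cone it is invariant under the positive scaling $1/(\sigma k)$, so $r_k/(\sigma k)\in\mC^{\circ}$; as $k\to\infty$ the terms $(\x_k-\x_{k+1})/(\sigma k)$ and $\c/(\sigma k)$ vanish while $-\mA^T\y_k/k\to\mA^T\v_D$, and closedness of $\mC^{\circ}$ gives $\mA^T\v_D\in\mC^{\circ}$.

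For (ii), rearrange the dual update $\y_k-\y_{k+1}=\b-\mA(2\x_{k+1}-\x_k)$ into $\mA\x_{k+1}=\tfrac12\bigl(\b+\mA\x_k-(\y_k-\y_{k+1})\bigr)$; dividing by $k$ and passing to the limit (using $\x_k/k\to-\v_R$ and boundedness of $\b$ and of the displacement) gives $-\mA\v_R=-\tfrac12\mA\v_R$, hence $\mA\v_R=\bz$. Since $\x_k\in\mC$ for $k\ge1$ and $\mC$ is a closed cone, $\x_k/k\in\mC$ forces $-\v_R\in\mC$, so $-\v_R\in\mathrm{Null}(\mA)\cap\mC=\{\bz\}$ and $\v_R=\bz$. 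Part (iii) is then immediate: the companion identity $\norm{\v_D}^2=\b^T\v_D$ underlying Lemma~\ref{lem:splust-equals-normvd2} (i.e.\ \cite[Lemma 6.8(viii)]{Jiang2023}) combined with $\mA\x=\b-\v_D$ yields $\v_D^T\mA\x=\v_D^T\b-\norm{\v_D}^2=0$.

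The crux is (iv): promoting $\v\in\mathrm{cl}(\mathrm{Range}(\mathrm{Id}-T))$ to $\v\in\mathrm{Range}(\mathrm{Id}-T)$. First I would show $\mA(\mC)$ is closed under $\mathrm{Null}(\mA)\cap\mC=\{\bz\}$: if $\mA\w_n\to\p$ with $\w_n\in\mC$ and $\norm{\w_n}\to\infty$, then a subsequential limit $\u$ of $\w_n/\norm{\w_n}$ is a unit vector in $\mathrm{Null}(\mA)\cap\mC$, impossible; hence $\{\w_n\}$ is bounded and $\p\in\mA(\mC)$. Now with $\c=\bz$ and $\v_R=\bz$ from (ii) the primal displacement vanishes, so $\y_k-\y_{k+1}=\b-\mA(2\x_{k+1}-\x_k)\to\v_D$ forces $\mA\x_{k+1}\to\b-\v_D$ with $\x_{k+1}\in\mC$; closedness then supplies $\bar\x\in\mC$ with $\mA\bar\x=\b-\v_D$. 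By (i), $\mA^T\v_D\in\mC^{\circ}$, and by (iii), $(\mA^T\v_D)^T\bar\x=\v_D^T\mA\bar\x=0$, so $\mA^T\v_D\in\mC^{\circ}\cap\{\bar\x\}^{\perp}=\mN_{\mC}(\bar\x)$. Setting $\bar\y:=-\v_D/\sigma$ makes $-\sigma\mA^T\bar\y=\mA^T\v_D\in\mN_{\mC}(\bar\x)$, so $\bar\x=\proj_{\mC}(\bar\x-\sigma\mA^T\bar\y)$; substituting $(\bar\x,\bar\y)$ into \eqref{eqn:pdhg-iteration} then gives $(\mathrm{Id}-T)(\bar\x,\bar\y)=(\bz,\v_D)=\v$. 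I expect this closedness-plus-construction step to be the main obstacle, since the general theory only locates $\v$ in the closure of the range; it is exactly the transversality hypothesis $\mathrm{Null}(\mA)\cap\mC=\{\bz\}$ that closes the range and lets me write down an explicit preimage.
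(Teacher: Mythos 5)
The first thing to note is that the paper contains no proof of Theorem~\ref{thm:JMV2023}: it is imported verbatim from \cite{Jiang2023} (``We now recount some useful results from \cite{Jiang2023}''), so your attempt can only be compared against a citation, not an internal argument. As a reconstruction, your proof is essentially correct for parts (i), (ii) and (iv), and it rests on the right foundations: with $\tau=1$ and $\sigma<1/\lVert\mA\rVert^2$ the matrix $M$ is positive definite, PDHG is a resolvent (proximal-point) iteration in the $M$-metric and hence firmly nonexpansive, and Pazy/Baillon--Bruck--Reich give $(\mathrm{Id}-T)z_k\to\v$ and $z_k/k\to-\v$. Your normal-cone argument for (i), the two-sided limit argument for (ii), and especially part (iv) --- closedness of $\mA(\mC)$ under $\mathrm{Null}(\mA)\cap\mC=\{\bz\}$, followed by the explicit preimage $(\bar\x,-\v_D/\sigma)$ verified through $\proj_{\mC}(\bar\x+\mA^T\v_D)=\bar\x$ because $\mA^T\v_D\in\mC^{\circ}\cap\{\bar\x\}^{\perp}=\mN_{\mC}(\bar\x)$ --- are all sound. (One remark on signs: your convention $z_k-z_{k+1}\to\v$ is the one consistent with $\v\in\mathrm{cl}(\mathrm{Range}(\mathrm{Id}-T))$ and with the paper's own computation $\v=(\mathrm{Id}-T)(\bar\x,\bar\y)$ in Theorem~\ref{thm:pdhg-to-fista}; the appendix sentence asserting $(\x_{k+1},\y_{k+1})-(\x_k,\y_k)\to(\v_R,\v_D)$ carries the opposite, internally inconsistent, sign.)

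The one genuine weakness is part (iii): you do not prove it, you reduce it to the identity $\lVert\v_D\rVert^2=\b^T\v_D$, citing \cite[Lemma 6.8 (viii)]{Jiang2023} --- a lemma from the very source the theorem is being quoted from, whose only appearance in this paper (inside the proof of Lemma~\ref{lem:splust-equals-normvd2}) is likewise a bare citation. This risks circularity (for all we know, the source proves that identity \emph{using} item (iii)) and leaves your proof non-self-contained at exactly one point. The gap is fillable with tools you already set up, at least in the case $\c=\bz$ that the paper actually uses. When $\c=\bz$, for every $\x\in\mC$ one has $T(\x,\bz)=(\x,\mA\x-\b)$, so $(\bz,\b-\mA\x)\in\mathrm{Range}(\mathrm{Id}-T)$. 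Since $\v=(\bz,\v_D)$ (by (ii)) is the $M$-projection of the origin onto the closed convex set $\mathrm{cl}(\mathrm{Range}(\mathrm{Id}-T))$, the variational inequality $\langle\v,s\rangle_M\ge\lVert\v\rVert_M^2$ applied to $s=(\bz,\b-\mA\x)$ reads, after the computations $\langle\v,s\rangle_M=\v_D^T(\b-\mA\x)$ and $\lVert\v\rVert_M^2=\lVert\v_D\rVert^2$, as $\v_D^T(\b-\mA\x)\ge\lVert\v_D\rVert^2$ for all $\x\in\mC$. Taking $\x=\bz$ and $\x=2\bar\x$ (allowed since $\mC$ is a cone), where $\bar\x\in\mC$ satisfies $\b-\mA\bar\x=\v_D$ by your closedness step, yields $\v_D^T\mA\bar\x\ge 0$ and $\v_D^T\mA\bar\x\le 0$, hence $\v_D^T\mA\bar\x=0$; since $\v_D^T\mA\x=\v_D^T(\b-\v_D)$ has the same value for every solution of $\mA\x=\b-\v_D$, this is exactly (iii), and it also hands you the identity $\b^T\v_D=\lVert\v_D\rVert^2$ of Lemma~\ref{lem:splust-equals-normvd2} for free rather than by citation.
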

    In the case of the ESP dual problem \eqref{prob:socp-esp-dual}, both $\c = \bz$ and $\mathrm{Null}(\mA)\cap\mC = \{\bz\}$, so all
    of the items in Theorem~\ref{thm:JMV2023} apply. We connected the results from PDHG to the FISTA formulation in Theorem~\ref{thm:pdhg-to-fista}, so we may now prove a theorem that bounds the distance between a residual vector $\v$ and $\v_D$ by the distance to the optimal function value.
    This theorem implies that an approximation to the infimal displacement vector converges at the rate $\O(1/k)$ under FISTA, since it is known
    that the function values of FISTA converge at the rate $\O(1/k^2)$ \cite[Theorem 10.34]{Beck2017}.
    \begin{lemma}[{\cite[Lemma 6.8 (iv)]{Jiang2023}}] \label{lem:JMV2023-1}
      Let $\z \in \mA(\mC)$ and suppose $\mathrm{Null}(\mA) \cap \mC = \{\bz\}$. Then
      \[
        \v_D^T(\v_D - (\b - \z)) \leq 0.
      \]
    \end{lemma}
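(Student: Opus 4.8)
The plan is to reduce the inequality to two facts already recorded in the excerpt: the polar-cone membership $\mA^T\v_D \in \mC^{\circ}$ from Theorem~\ref{thm:JMV2023}(i), and the norm identity $\lVert\v_D\rVert^2 = \b^T\v_D$ used in the proof of Lemma~\ref{lem:splust-equals-normvd2} (itself \cite[Lemma 6.8 (viii)]{Jiang2023}). With these in hand the statement is a short computation rather than a genuinely new argument.

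First I would expand the inner product directly:
\[
  \v_D^T\bigl(\v_D - (\b - \z)\bigr) = \lVert\v_D\rVert^2 - \b^T\v_D + \v_D^T\z.
\]
Substituting the identity $\lVert\v_D\rVert^2 = \b^T\v_D$ cancels the first two terms, leaving exactly $\v_D^T\z$. (In fact only the inequality $\lVert\v_D\rVert^2 \le \b^T\v_D$ is needed, since the remaining term will be shown to be nonpositive.) Next, because $\z \in \mA(\mC)$ I may pick $\x \in \mC$ with $\z = \mA\x$ and rewrite $\v_D^T\z = (\mA^T\v_D)^T\x$. Since $\mA^T\v_D \in \mC^{\circ}$ and $\x \in \mC$, the defining property of the polar cone yields $(\mA^T\v_D)^T\x \le 0$, which is precisely the desired bound.

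I do not expect a real obstacle, as both ingredients are already available. The only point worth flagging is a mild circularity to be avoided: taking $\z = \bz$ (legitimate, since $\bz = \mA\bz$ with $\bz \in \mC$) reduces the lemma to $\lVert\v_D\rVert^2 \le \b^T\v_D$, i.e.\ one direction of the very identity I am invoking. In a fully self-contained development one would therefore derive $\lVert\v_D\rVert^2 \le \b^T\v_D$ from the variational characterization of the infimal displacement vector as the $M$-norm projection of $\bz$ onto $\mathrm{cl}(\mathrm{Range}(\mathrm{Id} - T))$, instead of citing part (viii). Since the excerpt already treats that identity as an established result, the three-line argument above is sufficient here.
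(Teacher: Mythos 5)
Your argument is correct, but note that the paper never proves this statement at all: it is imported verbatim as \cite[Lemma 6.8 (iv)]{Jiang2023}, so there is no internal proof to compare against. What you have done is show that this imported fact is actually redundant given the paper's other imports, since it follows in three lines from Theorem~\ref{thm:JMV2023}(i) ($\mA^T\v_D\in\mC^{\circ}$) together with the identity $\lVert\v_D\rVert^2=\b^T\v_D$ used in Lemma~\ref{lem:splust-equals-normvd2}. Each step checks out: the expansion, the cancellation, the choice of $\x\in\mC$ with $\z=\mA\x$, and the polar-cone inequality $(\mA^T\v_D)^T\x\le 0$ are all valid. Your circularity caveat is well placed (taking $\z=\bz\in\mA(\mC)$ recovers $\lVert\v_D\rVert^2\le\b^T\v_D$, so parts (iv) and (viii) of the source lemma are essentially intertwined), but within this paper's framework both ingredients are treated as established, so the argument stands. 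One way to sidestep the circularity entirely, staying inside the paper: by Theorem~\ref{thm:pdhg-to-fista}(i) there exists $\bar{\x}\in\mC$ with $\b-\mA\bar{\x}=\v_D$, and Theorem~\ref{thm:JMV2023}(iii) gives $\v_D^T\mA\bar{\x}=0$; then
\[
  \v_D^T\bigl(\v_D-(\b-\z)\bigr)=\v_D^T\bigl(\z-\mA\bar{\x}\bigr)=(\mA^T\v_D)^T\x\le 0,
\]
which uses only the orthogonality statement (iii) and the polar-cone statement (i), never the norm identity (viii). Either way, your proof is a legitimate derivation of a result the paper merely cites.
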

    \begin{lemma} \label{lem:identity}
      For arbitrary vectors $\a,\b,\c$ of the same length, we have the following identity
      \[
        \lVert\b-\a\rVert^2 + 2(\a-\c)^T(\b-\a) = \lVert\b-\c\rVert^2 - \lVert\a-\c\rVert^2
      \]
    \end{lemma}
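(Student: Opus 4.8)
The plan is to recognize this as a one-line consequence of expanding a single squared norm, rather than separately expanding all the squared-norm terms on both sides. The key observation is the telescoping decomposition
\[
  \b - \c = (\b - \a) + (\a - \c),
\]
which writes the ``long'' difference as the sum of the two ``short'' differences that already appear on the left-hand side of the claimed identity.

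First I would take the squared Euclidean norm of both sides of this decomposition and apply the elementary expansion $\lVert\x + \y\rVert^2 = \lVert\x\rVert^2 + 2\x^T\y + \lVert\y\rVert^2$ with the choice $\x = \b - \a$ and $\y = \a - \c$. This yields
\[
  \lVert\b - \c\rVert^2 = \lVert\b - \a\rVert^2 + 2(\b-\a)^T(\a-\c) + \lVert\a - \c\rVert^2.
\]
Next I would invoke symmetry of the inner product to rewrite the cross term as $2(\b-\a)^T(\a-\c) = 2(\a-\c)^T(\b-\a)$, and then move $\lVert\a-\c\rVert^2$ to the opposite side. Collecting $\lVert\b-\a\rVert^2 + 2(\a-\c)^T(\b-\a)$ on one side and $\lVert\b-\c\rVert^2 - \lVert\a-\c\rVert^2$ on the other reproduces the stated equality exactly.

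There is no substantive obstacle in this argument: the result is a purely algebraic rearrangement valid for any vectors of equal length, with no appeal to the structure of the problem. The only thing worth noting is that grouping the difference as above sidesteps the bookkeeping of a brute-force expansion of all six squared-norm terms; a direct expansion of both sides would of course arrive at the same conclusion, merely with more intermediate cancellation.
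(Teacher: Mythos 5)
Your proof is correct: expanding $\lVert\b-\c\rVert^2 = \lVert(\b-\a)+(\a-\c)\rVert^2$ and rearranging gives the identity exactly. The paper itself states this lemma without proof (treating it as an elementary algebraic fact), and your one-line expansion is precisely the standard argument the authors implicitly rely on.
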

    \begin{theorem} \label{thm:vk-vd-function-value}
      Fix $\bar{\x}$ such that $\b - \mA\bar{\x} = \v_D$. If $\x \in \mC$ and $\v := \b - \mA\x$, then
      \[
        \lVert\v - \v_D\rVert^2 \leq 2\big(f(\x) - f(\bar{\x})\big),
      \]
      where $f(\x) := \frac{1}{2}\lVert\mA\x - \b\rVert^2$.
    \end{theorem}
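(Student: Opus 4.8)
The plan is to reduce the claim to the two results stated immediately above it: the polarization identity of Lemma~\ref{lem:identity} and the obtuse-angle property of the infimal displacement vector in Lemma~\ref{lem:JMV2023-1}. First I would rewrite both function values as residual norms. Since $\v = \b - \mA\x$ we have $\mA\x - \b = -\v$, so $f(\x) = \tfrac{1}{2}\lVert\v\rVert^2$, and likewise $f(\bar{\x}) = \tfrac{1}{2}\lVert\v_D\rVert^2$ because $\b - \mA\bar{\x} = \v_D$. Hence $2\big(f(\x) - f(\bar{\x})\big) = \lVert\v\rVert^2 - \lVert\v_D\rVert^2$, and the target inequality is equivalent to $\lVert\v - \v_D\rVert^2 \leq \lVert\v\rVert^2 - \lVert\v_D\rVert^2$.

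Next I would apply Lemma~\ref{lem:identity} with its three generic vectors taken to be $\v_D$, $\v$, and $\bz$ (in the roles of the lemma's first, second, and third argument). This produces the exact identity
\[
  \lVert\v - \v_D\rVert^2 + 2\v_D^T(\v - \v_D) = \lVert\v\rVert^2 - \lVert\v_D\rVert^2 = 2\big(f(\x) - f(\bar{\x})\big),
\]
so that $\lVert\v - \v_D\rVert^2 = 2\big(f(\x) - f(\bar{\x})\big) - 2\v_D^T(\v - \v_D)$. It therefore suffices to show the cross term $\v_D^T(\v - \v_D)$ is nonnegative, which is exactly where the nontrivial content enters.

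That nonnegativity is supplied by Lemma~\ref{lem:JMV2023-1}. Setting $\z := \mA\x$, the hypothesis $\x \in \mC$ gives $\z \in \mA(\mC)$, and the ESP dual satisfies $\mathrm{Null}(\mA)\cap\mC = \{\bz\}$ (as noted after Theorem~\ref{thm:JMV2023}), so the lemma applies and yields $\v_D^T\big(\v_D - (\b - \z)\big) \leq 0$. Since $\b - \z = \b - \mA\x = \v$, this is precisely $\v_D^T(\v_D - \v) \leq 0$, i.e.\ $\v_D^T(\v - \v_D) \geq 0$. Substituting back into the identity, the cross term is subtracted, giving $\lVert\v - \v_D\rVert^2 \leq 2\big(f(\x) - f(\bar{\x})\big)$ and completing the argument.

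I do not anticipate a genuine obstacle, since the entire content is packaged in Lemma~\ref{lem:JMV2023-1}, which records that $\v_D$ is the minimal-norm feasible residual (equivalently, by Theorem~\ref{thm:pdhg-to-fista}, the projection of $\bz$ onto $\mathrm{cl}(\b - \mA(\mC))$) and hence has nonnegative inner product with the displacement $\v - \v_D$ toward any other feasible residual $\v = \b - \mA\x$. The only point needing care is the sign bookkeeping when matching the generic variables of Lemma~\ref{lem:identity} to $\v_D$, $\v$, and $\bz$, together with the observation that the overloaded symbol $\b$ appearing there plays the role of the generic second vector (here $\v$) rather than the data vector $(1,1,\bz)$.
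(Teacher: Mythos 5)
Your proof is correct and takes essentially the same route as the paper's: both rest on Lemma~\ref{lem:identity} to expose the cross term and on Lemma~\ref{lem:JMV2023-1} (with $\z = \mA\x$) to show that term has the favorable sign. Your instantiation of the identity at $(\v_D, \v, \bz)$ is just a translation by $-\b$ of the paper's instantiation at $(\mA\bar{\x}, \mA\x, \b)$, so the two arguments coincide up to this cosmetic reparametrization.
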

    \begin{proof}
      First, $\bar{\x}$ exists by Theorem~\ref{thm:pdhg-to-fista}, so the statement is well-defined.
      By Lemma~\ref{lem:identity}, we may write
      \[
        \lVert\mA\x - \b\rVert^2 - \lVert\mA\bar{\x} - \b\rVert^2 = \lVert\mA\x - \mA\bar{\x}\rVert^2 + 2(\mA\bar{\x} - \b)^T(\mA\x - \mA\bar{\x}).
      \]
      Using that $\b - \mA\bar{\x} = \v_D$, we may rewrite the above as
      \[
        \lVert\mA\x - \b\rVert^2 - \lVert\mA\bar{\x} - \b\rVert^2 = \lVert\mA\x - \mA\bar{\x}\rVert^2 - 2\v_D^T(\v_D - (\b - \mA\x)).
      \]
      We may now use Lemma~\ref{lem:JMV2023-1} to obtain the desired inequality:
      \begin{align*}
        \lVert\mA\x - \b\rVert^2 - \lVert\mA\bar{\x} - \b\rVert^2 &= \lVert\mA\x - \mA\bar{\x}\rVert^2 - 2\v_D^T(\v_D - (\b - \mA\x))\\
                                                                  &\geq \lVert\mA\x - \mA\bar{\x}\rVert^2\\
                                                                  &= \lVert\v - \v_D\rVert^2.
      \end{align*}
    \end{proof}

    \subsection{Obtaining a data-dependent bound on the number of FISTA iterations} \label{app:esp-iters}
    Our strategy is to perturb the data points $\c_1,\hdots,\c_j$ to $\hat{\c_1},\hdots,\hat{\c_j}$, yielding
    a perturbation of $\mA$ to $\hat{\mA}$ such that there exists $\hat{\x} \in \mC$ satisfying $\hat{\mA}\hat{\x} = \b$.
    
    Let $\x^* \in \mC$ such that $\b - \mA\x^* = \v_D$, which exists by Theorem~\ref{thm:pdhg-to-fista} (i). We will say that
    \[
      \x^* := \begin{pmatrix}
        \lambda_1^*\\
        \p_1^*\\
        \vdots\\
        \lambda^*_j\\
        \p^*_j\\
        \mu^*_1\\
        \q^*_1\\
        \vdots\\
        \mu^*_l\\
        \q^*_l
      \end{pmatrix}\quad\quad\text{and}\quad\quad \hat{\x} := \begin{pmatrix}
        \hat{\lambda}_1\\
        \hat{\p}_1\\
        \vdots\\
        \hat{\lambda}_j\\
        \hat{\p}_j\\
        \hat{\mu}_1\\
        \hat{\q}_1\\
        \vdots\\
        \hat{\mu}_l\\
        \hat{\q}_l
      \end{pmatrix},
    \]
    where $\blambda^* = (\lambda_1^*,\hdots,\lambda_j^*)$, $\hat{\blambda} = (\hat{\lambda}_1,\hdots,\hat{\lambda}_j)$, $\bmu^* = (\mu_1^*,\hdots,\mu_j^*)$, $\hat{\bmu} = (\hat{\mu}_1,\hdots,\hat{\mu}_j)$, and define
    \begin{align*}
      &\hat{\lambda} := \proj_{\Delta^j}(\lambda^*)\\
      &\hat{\mu} := \proj_{\Delta^l}(\mu^*)\\
      &\hat{\p}_i := \begin{cases}
        \frac{\hat{\lambda}_i}{\lambda_i^*}\p_i^*, & \hat{\lambda}_i < \lambda_i^*;\\
        \p_i^*, & \text{otherwise}
      \end{cases}\\
      &\hat{\q}_i := \begin{cases}
        \frac{\hat{\mu}_i}{\mu_i^*}\q_i^*, & \hat{\mu}_i < \mu_i^*\\
        \q_i^*, & \text{otherwise}.
      \end{cases}
    \end{align*}
    
    \begin{lemma} \label{lem:simplex-proj}
      $\lVert\blambda^* - \hat{\blambda}\rVert_1 \leq \lvert 1 - \e^T\blambda^*\rvert = \lvert s^*\rvert$ and $\lVert\bmu^* - \hat{\bmu}\rVert_1 \leq \lvert 1 - \e^T\bmu^*\rvert = \lvert t^*\rvert$.
    \end{lemma}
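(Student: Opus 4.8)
The plan is to first dispose of the two stated equalities, which are immediate from the constraint structure, and then reduce the two inequalities to a single fact about Euclidean projection onto the probability simplex.

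For the equalities, recall that $\v_D = \b - \mA\x^*$ with $\v_D = (s^*, t^*, \w^*)$ and $\b = (1,1,\bz)$. Reading off the first two rows of the constraint $\mA\x = \b$ from \eqref{eqn:esp-explicit-constraint}, the first component of $\mA\x^*$ is $\e^T\blambda^*$ and the second is $\e^T\bmu^*$. Hence $s^* = 1 - \e^T\blambda^*$ and $t^* = 1 - \e^T\bmu^*$, giving $\lvert 1 - \e^T\blambda^*\rvert = \lvert s^*\rvert$ and $\lvert 1 - \e^T\bmu^*\rvert = \lvert t^*\rvert$, exactly as claimed.

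For the inequality on $\blambda^*$, I would first note that $\blambda^* \geq \bz$: since $\x^* \in \mC$, the cone constraint gives $\lambda_i^* \geq \lVert\p_i^*\rVert \geq 0$ for each $i$. The key tool is the well-known soft-threshold form of the Euclidean simplex projection, namely $\hat\lambda_i = (\lambda_i^* - \tau)_+$ for a single scalar $\tau$ chosen so that $\e^T\hat\blambda = 1$. Because $\tau \mapsto \sum_i(\lambda_i^* - \tau)_+$ is non-increasing and equals $\e^T\blambda^*$ at $\tau = 0$ (using $\blambda^* \geq \bz$), the sign of $\tau$ matches the sign of $\e^T\blambda^* - 1$. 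I would then split into two cases. If $\e^T\blambda^* \geq 1$, then $\tau \geq 0$ and $0 \leq \hat\lambda_i \leq \lambda_i^*$ coordinate-wise, so the distance telescopes: $\lVert\blambda^* - \hat\blambda\rVert_1 = \sum_i(\lambda_i^* - \hat\lambda_i) = \e^T\blambda^* - 1$. If instead $\e^T\blambda^* < 1$, then $\tau < 0$ and no coordinate is clipped, so $\hat\lambda_i = \lambda_i^* - \tau \geq \lambda_i^*$ and $\lVert\blambda^* - \hat\blambda\rVert_1 = \sum_i(\hat\lambda_i - \lambda_i^*) = 1 - \e^T\blambda^*$. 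In both cases the distance equals exactly $\lvert 1 - \e^T\blambda^*\rvert$, which is stronger than the stated $\leq$. The argument for $\bmu^*$ is identical, replacing $j$ by $l$, $\blambda^*$ by $\bmu^*$, and $\p_i^*$ by $\q_i^*$.

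I do not expect a genuine obstacle here: the entire content is that nonnegativity of $\blambda^*$ forces the simplex projection to move every coordinate in the same direction, so the $\ell_1$ perturbation collapses to the change in total mass $\lvert 1 - \e^T\blambda^*\rvert$. The only point requiring care is invoking the projection's soft-threshold structure and verifying that the sign of the threshold $\tau$ is controlled by whether $\e^T\blambda^*$ exceeds $1$; everything else is routine bookkeeping.
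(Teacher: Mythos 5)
Your proof is correct, and it takes a genuinely different route from the paper's. Both proofs obtain the equalities $s^* = 1 - \e^T\blambda^*$, $t^* = 1 - \e^T\bmu^*$ the same way, by reading off the first two rows of $\mA\x^* = \b$. For the inequality, however, the paper argues by comparison with the radial rescaling $\bar{\blambda} := \blambda^*/r$ (where $r = \e^T\blambda^*$): it computes $\lVert\blambda^* - \bar{\blambda}\rVert_1 = \lvert 1 - \e^T\blambda^*\rvert$ and then invokes $\lVert\blambda^* - \hat{\blambda}\rVert_1 \leq \lVert\blambda^* - \bar{\blambda}\rVert_1$, treating the zero vector as a separate case. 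That comparison step is delicate as written: $\hat{\blambda} = \proj_{\Delta^j}(\blambda^*)$ is the \emph{Euclidean} projection, so its optimality is in the $\ell_2$ distance, not the $\ell_1$ distance, and the paper does not justify transferring it. Your argument avoids this issue entirely: by using the soft-threshold form $\hat{\lambda}_i = (\lambda_i^* - \tau)_+$ together with $\blambda^* \geq \bz$ (which you correctly extract from the cone constraint $\lVert\p_i^*\rVert \leq \lambda_i^*$), you show all coordinates move in the same direction, so the $\ell_1$ perturbation telescopes to exactly $\lvert 1 - \e^T\blambda^*\rvert$. This yields a strictly stronger conclusion (equality rather than an upper bound), absorbs the paper's $\blambda^* = \bz$ case into the $\e^T\blambda^* < 1$ branch, and in fact supplies precisely the missing justification for the paper's comparison step, since equality with the minimal possible $\ell_1$ distance $\lvert\e^T(\blambda^* - \hat{\blambda})\rvert$ is what makes the paper's inequality valid. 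The price is that you must invoke (or prove) the soft-threshold characterization of the simplex projection, whereas the paper's intended argument needs only the existence of one nearby feasible point.
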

    \begin{proof}
      We will prove $\lVert\blambda^* - \hat{\blambda}\rVert_1 \leq \lvert 1 - \e^T\blambda^*\rvert = \lvert s^*\rvert$ only, since the proof of the second expression
      is analogous.
    
      The equality $s^* = 1 - \e^T\blambda^*$ comes from the explicit definition of the constraint $\mA\x^* = \b$ in \eqref{eqn:esp-explicit-constraint} and the assumption that
      $(s^*,t^*,\w^*) = \v_D = \b - \mA\x^*$.
    
      If $\blambda^* = \bz$, since $\hat{\blambda} \in \Delta^j$,
      \[
        \lVert\blambda^* - \hat{\blambda}\rVert_1 = \lVert\hat{\blambda}\rVert_1 = 1 = \lvert 1 - \e^T\blambda^*\rvert.
      \]
      Suppose $\blambda^* \geq \bz$ and let $r := \lVert \blambda^*\rVert_1 = \sum_{i=1}^j \blambda_i^* = \e^T\blambda^*$. Defining $\bar{\blambda} := \frac{1}{r}\blambda^*$, we have
      that $\bar{\blambda} \in \Delta^j$, so
      \begin{align*}
        \lVert \blambda^* - \hat{\blambda}\rVert_1 &\leq \lVert \blambda^* - \bar{\blambda}\rVert_1\\
                                                   &= \lvert 1-q\rvert\frac{\lVert \blambda^*\rVert}{q}\\
                                                   &= \lvert 1 - \e^T\blambda^*\rvert.
      \end{align*}
    \end{proof}
    
    \begin{lemma} \label{lem:pq-diff}
      $\lVert\hat{\p}_i - \p_i^*\rVert \leq \lvert\hat{\lambda}_i - \lambda_i^*\rvert$ and $\lVert\hat{\q}_i - \q_i^*\rVert \leq \lvert\hat{\mu}_i - \mu_i^*\rvert$.
    \end{lemma}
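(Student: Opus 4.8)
The plan is to prove the bound for the $\p$-vectors; the bound for the $\q$-vectors follows by the identical argument with $\bmu^*,\hat{\bmu},\q_i^*$ in place of $\blambda^*,\hat{\blambda},\p_i^*$. The key fact I will invoke is the second-order cone feasibility of $\x^*$, namely $\lVert\p_i^*\rVert \leq \lambda_i^*$, which is exactly the constraint $\x^* \succeq_{\mC} \bz$ spelled out componentwise. The whole argument is a two-case split dictated by the piecewise definition of $\hat{\p}_i$.

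First I would dispose of the ``otherwise'' branch, where $\hat{\lambda}_i \geq \lambda_i^*$. Here $\hat{\p}_i = \p_i^*$ by definition, so $\lVert\hat{\p}_i - \p_i^*\rVert = 0 \leq \lvert\hat{\lambda}_i - \lambda_i^*\rvert$ and there is nothing more to do.

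In the remaining branch $\hat{\lambda}_i < \lambda_i^*$, I would first note that $\lambda_i^* > 0$: since $\hat{\lambda}_i \geq 0$ (it is a coordinate of a point in the simplex $\Delta^j$) and $\hat{\lambda}_i < \lambda_i^*$, the denominator in $\hat{\p}_i = \tfrac{\hat{\lambda}_i}{\lambda_i^*}\p_i^*$ is strictly positive, so the expression is well-defined. Then I would factor out $\p_i^*$ and estimate
\[
  \lVert\hat{\p}_i - \p_i^*\rVert = \left\lvert \frac{\hat{\lambda}_i}{\lambda_i^*} - 1 \right\rvert \lVert\p_i^*\rVert = \frac{\lvert\hat{\lambda}_i - \lambda_i^*\rvert}{\lambda_i^*}\,\lVert\p_i^*\rVert \leq \frac{\lvert\hat{\lambda}_i - \lambda_i^*\rvert}{\lambda_i^*}\,\lambda_i^* = \lvert\hat{\lambda}_i - \lambda_i^*\rvert,
\]
where the inequality is precisely the cone constraint $\lVert\p_i^*\rVert \leq \lambda_i^*$ (and the cancellation is legal since $\lambda_i^* > 0$). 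This gives the claimed bound.

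There is no real obstacle here: the construction of $\hat{\p}_i$ was evidently reverse-engineered so that this estimate goes through, and the only points requiring a moment's care are verifying $\lambda_i^* > 0$ before dividing and remembering to feed in the SOC feasibility $\lVert\p_i^*\rVert \leq \lambda_i^*$ rather than treating $\p_i^*$ as an arbitrary vector. The role of this lemma in the larger development is to bound the per-block displacement $\lVert\hat{\p}_i - \p_i^*\rVert$ by the scalar displacement $\lvert\hat{\lambda}_i - \lambda_i^*\rvert$, which in turn feeds (via Lemma~\ref{lem:simplex-proj}) into a bound on $\lVert\hat{\x}-\x^*\rVert$ in terms of $\lvert s^*\rvert$ and $\lvert t^*\rvert$, en route to Theorem~\ref{thm:slab-width}.
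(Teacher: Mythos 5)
Your proof is correct and follows essentially the same two-case argument as the paper: the ``otherwise'' branch is trivial since $\hat{\p}_i = \p_i^*$, and in the branch $\hat{\lambda}_i < \lambda_i^*$ one factors out $\p_i^*$ and applies the cone feasibility $\lVert\p_i^*\rVert \leq \lambda_i^*$. Your additional remark that $\lambda_i^* > 0$ (so the division is well-defined) is a small point of care the paper leaves implicit, but the substance is identical.
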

    \begin{proof}
      If $\hat{\lambda}_i \geq \lambda_i^*$, then the inequality is clear. If $\hat{\lambda}_i < \lambda_i^*$, then $\hat{\p}_i = \hat{\lambda}_i\p_i^*/\lambda_i^*$, so
      $\lVert\hat{\p}_i - \p_i^*\rVert = \lVert(\hat{\lambda}_i - \lambda_i^*)\p_i^*/\lambda_i^*\rVert \leq \lvert\hat{\lambda}_i - \lambda_i^*\rvert$, since $\lVert \p_i^*\rVert \leq \lambda_i^*$.
      The proof of $\lVert\hat{\q}_i - \q_i^*\rVert \leq \lvert\hat{\mu}_i - \mu_i^*\rvert$ is analogous.
    \end{proof}

    \SlabWidth*
    \begin{proof}
      Since $\b - \mA\x^* = \v_D = (s^*,t^*,\w^*)$, we have that $\b - \mA\hat{\x} = (0,0,\r)$, where
      \[
        \r := \w^* + \sum_{i=1}^j\big((\lambda_i^* - \hat{\lambda}_i)\c_i + A_i(\p_i^* - \hat{\p}_i)\big) + \sum_{i=1}^l\big(-(\mu_i^* - \hat{\mu}_i)\d_i + B_i(\q_i^* - \hat{\q}_i)\big).
      \]
      From Lemma~\ref{lem:simplex-proj}, 
      \[
        \sum_{i=1}^j \lvert\lambda_i^* - \hat{\lambda}_i\rvert \leq \lvert s^*\rvert,\quad \sum_{i=1}^l \lvert\mu_i^* - \hat{\mu}_i\rvert \leq \lvert t^*\rvert,
      \]
      so from Lemma~\ref{lem:pq-diff}, we have that
      \[
        \sum_{i=1}^j \lVert\p_i^* - \hat{\p}_i\rVert \leq \lvert s^*\rvert,\quad \sum_{i=1}^l \lVert\q_i^* - \hat{\q}_i\rVert \leq \lvert t^*\rvert.
      \]
      Also by the normalization assumption \eqref{eqn:normalization-assumption}, $\lVert\c_i\rVert \leq 1$, $\lVert A_i\rVert \leq 1$, $\lVert\d_i\rVert \leq 1$, $\lVert B_i\rVert \leq 1$.
      These inequalities allow us to conclude that
      \[
        \lVert\r\rVert \leq \lVert\w^*\rVert + 2\lvert s^*\rvert + 2\lvert t^*\rvert \leq 3\lVert\v_D\rVert.
      \]
      We have established that
      \[
        \sum_{i=1}^j(\hat\lambda_i\c_i+A_i\hat\p_i)+\sum_{i=1}^l(-\hat\mu_i\d_i+B_i\hat\q_i)=\r,
      \]
      which is equivalent to
      \[
        \sum_{i=1}^j(\hat\lambda_i(\c_i-\r)+A_i\hat\p_i)+\sum_{i=1}^l(-\hat\mu_i\d_i+B_i\hat\q_i)=\bz.
      \]
      So we may define $\hat{\mA}$ with all the data the same as $\mA$ except with centers at $\c_i - \r$ instead of $\c_i$, for each $i$.
      After this perturbation, the ESP is no longer separable. It follows that the minimum perturbation is at most $3\lVert\v_D\rVert$.
    \end{proof}

    \section{Details of Section~\ref{sec:svm}}
    \subsection{An example of a dual formulation of the soft-margin SVM problem with multiple optimizers} \label{app:multiple-min-example}
      Consider the one-dimensional case of \eqref{prob:sm-svm-dual},
      where $c_1 = c_2 = c_3 = 1, c_4 = -1$ and $d_1 = d_2 = -1, d_3 = 1$, and $\gamma \geq 1$. The well-classified points are clearly $c_1,c_2,c_3$ and $d_1,d_2$. The KKT
      conditions for this problem are as follows:
      \begin{align}
        \begin{split} \label{eqn:example-kkt}
          &x = C^T\u - D^T\v\\
          &\e^T\u = \e^T\v\\
          &\z_1 = \gamma\e - \u\\
          &\z_2 = \gamma\e - \v\\
          &c_ix + x_i \geq 1 - s_{1,i},\quad\forall i \in [j]\\
          &d_ix + x_i \leq -1 + s_{2,i},\quad\forall i \in [l]\\
          & u_i(1-s_{1,i} - c_ix - \xi) = 0,\quad\forall i \in [j]\\
          & v_i(1-s_{2,i} + d_ix + \xi) = 0,\quad\forall i \in [l]\\
          & \z^T\s = 0\\
          &\bz \leq \u \leq \gamma\e\\
          &\bz \leq \v \leq \gamma\e\\
          &\s,\z \geq \bz
        \end{split}
      \end{align}
      One KKT solution of this problem is given by $x = 1$, $\xi = 0$, $\z_1 = \gamma\e - \u$, $\z_2 = \gamma\e - \v$,
      \[
        \u = \begin{pmatrix}
          \gamma/3 + 1/6\\
          \gamma/3 + 1/6\\
          \gamma/3 + 1/6\\
          \gamma
        \end{pmatrix},\ 
        \v = \begin{pmatrix}
          \gamma/2 + 1/4\\
          \gamma/2 + 1/4\\
          \gamma
        \end{pmatrix},\ 
        \s_1 = \begin{pmatrix}
          0\\
          0\\
          0\\
          2
        \end{pmatrix},\ 
        \s_2 = \begin{pmatrix}
          0\\
          0\\
          2
        \end{pmatrix}.
      \]
      Indeed, this optimal solution is desirable, since for the properly classified points, $u_i < \gamma$ and $v_i < \gamma$. However, we can
      construct another KKT solution from the above by adding $2\gamma/3 - 1/6$ to $u_1$ and subtracting $\gamma/3 - 1/12$ from both
      $u_2$ and $u_3$:
      \[
        \u = \begin{pmatrix}
          \gamma\\
          1/4\\
          1/4\\
          \gamma
        \end{pmatrix},\ 
        \v = \begin{pmatrix}
          \gamma/2 + 1/4\\
          \gamma/2 + 1/4\\
          \gamma
        \end{pmatrix}.
      \]
      We still set $\z_1 = \gamma\e - \u$ and $\z_2 = \gamma\e - \v$, and the values of all other variables remain the same.
      This new optimal solution is undesirable in that the well-separated point $c_1$ has $u_1 = \gamma$, preventing it
      from being identified by simply verifying the value of $u_4$ is strictly less than $\gamma$.
      This example can be easily extended to having many more points, in which case even more well-classified points can have corresponding
      optimal dual equal to $\gamma$.
      
    \subsection{Solving the perturbed dual}
    \begin{lemma} \label{lem:dist-to-N}
      Let $\r := (\u,\v) \in \R^j\times\R^l$ and let
      \[
        \r^+ := \proj_{\Omega}\left(\r + \tfrac{1}{L}\nabla g(\r)\right).
      \]
      Then,
      \[
        \dist\left(\nabla g(\r^+),\mN_{\Omega}(\r^+)\right) \leq 2L\lVert \r - \r^+\rVert.
      \]
    \end{lemma}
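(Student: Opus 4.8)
The plan is to exploit the variational characterization of the Euclidean projection together with the $L$-smoothness of $g$ established in Theorem~\ref{thm:L-smooth-g}. Recall that for a closed convex set $\Omega$, the projection satisfies $\y - \proj_{\Omega}(\y) \in \mN_{\Omega}(\proj_{\Omega}(\y))$. I would apply this with $\y := \r + \tfrac{1}{L}\nabla g(\r)$, so that $\proj_{\Omega}(\y) = \r^+$ gives
\[
  \r + \tfrac{1}{L}\nabla g(\r) - \r^+ \in \mN_{\Omega}(\r^+).
\]
Since $\mN_{\Omega}(\r^+)$ is a convex cone and $L > 0$, multiplying through by $L$ shows that the vector $\z^+ := \nabla g(\r) + L(\r - \r^+)$ also lies in $\mN_{\Omega}(\r^+)$. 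This $\z^+$ serves as an explicit witness for the distance bound.

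With this witness in hand, the distance $\dist\big(\nabla g(\r^+), \mN_{\Omega}(\r^+)\big)$ is at most $\norm{\nabla g(\r^+) - \z^+}$. I would then compute the difference $\nabla g(\r^+) - \z^+ = \big(\nabla g(\r^+) - \nabla g(\r)\big) - L(\r - \r^+)$ and apply the triangle inequality, bounding the first term by the $L$-Lipschitz continuity of $\nabla g$ (which follows from $L$-smoothness), namely $\norm{\nabla g(\r^+) - \nabla g(\r)} \leq L\norm{\r^+ - \r}$, and the second term trivially by $L\norm{\r - \r^+}$. Summing the two contributions yields the claimed bound $2L\norm{\r - \r^+}$.

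There is no serious obstacle here; the argument is short once the right witness is identified. The only conceptual point worth emphasizing is the recognition that the projection residual $\y - \r^+$ belongs to the normal cone and, because the normal cone is a cone, can be rescaled by $L$ to produce a feasible element of $\mN_{\Omega}(\r^+)$ that differs from $\nabla g(\r^+)$ only by terms controllable via the triangle inequality and Lipschitz continuity of the gradient. Everything else is routine.
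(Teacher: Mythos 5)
Your proposal is correct and is essentially the same argument as the paper's: both identify the witness $\nabla g(\r) + L(\r - \r^+) \in \mN_{\Omega}(\r^+)$ (the paper via the first-order optimality condition of the prox subproblem, you via the projection characterization plus rescaling by $L$ using the cone property, which are the same variational fact), and both conclude by the triangle inequality and $L$-Lipschitz continuity of $\nabla g$. The only cosmetic difference is that the paper routes the final bound through $\partial\left(-G(\r^+)\right) = -\nabla g(\r^+) + \mN_{\Omega}(\r^+)$, whereas you bound $\dist\left(\nabla g(\r^+),\mN_{\Omega}(\r^+)\right)$ directly; these distances coincide.
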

    \begin{proof}
      The problem \eqref{prob:sm-svm-dual-p} can be rewritten as the unconstrained maximum of the function $G$, where $G := g(\u,\v) - i_{\Omega}(\u,\v)$,
      which is equivalent to the convex problem $\min\{-G(\r): \r \in \Omega\}$.
      The projection of $\r + \frac{1}{L}\nabla g(\r)$ is the minimizer of
      \[
        h(\z) = -g(\r) - \nabla g(\r)^T(\z-\r) + \tfrac{L}{2}\lVert \z - \r\rVert^2 + i_{\Omega}(\z),
      \]
      hence $\z = \r^+$ must satisfy the first-order optimality condition $\bz \in \partial h(\r^+)$, or equivalently
      \[
        \bz \in -\nabla g(\r) + L(\r^+-\r) + \mN_{\Omega}(\r^+).
      \]
      Hence, the optimality conditions are $L(\r-\r^+) + \nabla g(\r) \in \mN_{\Omega}(\r^+)$. If we add $-\nabla g(\r^+)$ to both
      sides we obtain
      \[
        L(\r-\r^+) + \nabla g(\r) - \nabla g(\r^+) \in -\nabla g(\r^+) + \mN_{\Omega}(\r^+) = \partial \left(-G(\r^+)\right),
      \]
      where the final equality uses that the subdifferential sum rule holds since $g$ is proper and convex with full domain,
      and $\Omega$ has nonempty interior.
      Since $L(\r-\r^+) + \nabla g(\r) - \nabla g(\r^+)$ is contained in $\partial \left(-G(\r^+)\right)$, it gives an upper bound on $\dist(\bz,\partial \left(-G(\r^+)\right))$:
      \begin{align*}
        \dist(\bz,\partial \left(-G(\r^+)\right)) &\leq \left\lVert L(\r-\r^+) + \nabla g(\r) - \nabla g(\r^+)\right\rVert\\
                                 &\leq L\left\lVert \r - \r^+\right\rVert + \left\lVert \nabla g(\r) - \nabla g(\r^+)\right\rVert\\
                                 &\leq 2L\left\lVert \r - \r^+\right\rVert,
      \end{align*}
      where we used the triangle inequality and $L$-smoothness of $g$.
    \end{proof}

  \subsection{Obtaining a hyperplane that separates the well-classified points} \label{app:svm-hyperplane-separation}
  \begin{lemma} \label{lem:L-psi}
    $\psi(\theta)$ is Lipschitz continuous with constant $\gamma$.
  \end{lemma}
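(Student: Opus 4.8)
The plan is to establish Lipschitz continuity by showing that $\psi$ is continuously differentiable with $|\psi'(\theta)| \le \gamma$ everywhere, after which the mean value theorem immediately delivers the claimed Lipschitz constant. First I would differentiate the three pieces of $\psi$ in \eqref{eqn:psi-cc}: on $(-\infty, 1-\mu\gamma]$ the function is affine with slope $-\gamma$; on $[1-\mu\gamma, 1]$ it is the quadratic $\frac{1}{2\mu}(1-\theta)^2$, whose derivative is $-\frac{1}{\mu}(1-\theta)$; and on $[1,\infty)$ it is constant with derivative $0$.

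Next I would verify that these branches join smoothly at the two breakpoints, so that $\psi'$ is well-defined. At $\theta = 1-\mu\gamma$ the quadratic branch gives $-\frac{1}{\mu}\big(1-(1-\mu\gamma)\big) = -\gamma$, matching the affine branch, and at $\theta = 1$ the quadratic branch gives $-\frac{1}{\mu}(1-1) = 0$, matching the constant branch. Hence $\psi$ is $C^1$ on all of $\R$.

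The core estimate is the bound on the middle branch: for $\theta \in [1-\mu\gamma, 1]$ we have $1-\theta \in [0, \mu\gamma]$, so $|\psi'(\theta)| = \frac{1}{\mu}(1-\theta) \le \frac{1}{\mu}\cdot\mu\gamma = \gamma$. Combined with $|\psi'| = \gamma$ on the left branch and $|\psi'| = 0$ on the right branch, this gives $\sup_{\theta} |\psi'(\theta)| \le \gamma$. Applying the mean value theorem to any $\theta_1, \theta_2$ then yields $|\psi(\theta_1) - \psi(\theta_2)| \le \gamma\,|\theta_1 - \theta_2|$, which is the desired Lipschitz bound.

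I do not anticipate any genuine obstacle: the only point requiring care is confirming that the derivative of the quadratic piece remains within $[-\gamma, 0]$, which is precisely where the domain restriction $\theta \in [1-\mu\gamma, 1]$ enters. The verification of continuity of $\psi'$ at the breakpoints and the final mean-value-theorem step are routine.
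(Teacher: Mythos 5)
Your proof is correct and follows essentially the same route as the paper: the paper's proof likewise notes that $\psi'$ is continuous and bounded in absolute value by $\gamma$ on each piece, and concludes Lipschitz continuity from there. Your version simply makes explicit the derivative matching at the breakpoints and the mean value theorem step, which the paper leaves implicit.
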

  \begin{proof}
    The derivative of $\psi(\theta)$ defined in \eqref{eqn:psi-prime} is continuous, and on each interval
    we have the upper bound $\lvert\psi'(\theta)\rvert \leq \gamma$.
  \end{proof}
  \begin{theorem} \label{thm:svm-hyperplane-separation}
    Let $\bar{K}$ and $K$ be given according to \eqref{eqn:K-conds}. Suppose $\lVert(\u_k,\v_k) - (\u^*,\v^*)\rVert \leq \Delta$, $\w_k := C^T\u_k - D^T\v_k$, and $t_k$ is the solution to \eqref{prob:tprob}.
    Then $(\w_k,t_k)$ encode a separating hyperplane for the well-classified points, in particular
    \begin{align*}
      \c_i^T\w_k + t_k \geq 1 - \bar{K},\quad\forall i \in J,\\
      \d_i^T\w_k + t_k \leq -1 + \bar{K},\quad\forall i \in L.
    \end{align*}
  \end{theorem}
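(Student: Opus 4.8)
The plan is to compare the early-stopped pair $(\w_k,t_k)$ against the exact optimizer $(\w^*,t^*)$, for which the margin bounds $\c_i^T\w^*+t^*\ge 1-K$ for all $i\in J$ and $-\d_i^T\w^*-t^*\ge 1-K$ for all $i\in L$ are already available (Corollary~\ref{cor:theta-bound} and the standing assumption of this subsection, with $K=K(\rho,\nu)\in[0,1/2)$). For any $i\in J$ I would write
\[
  \c_i^T\w_k+t_k=(\c_i^T\w^*+t^*)+\c_i^T(\w_k-\w^*)+(t_k-t^*)\ge (1-K)-\norm{\c_i}\norm{\w_k-\w^*}-\lvert t_k-t^*\rvert,
\]
and symmetrically for $i\in L$. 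Since $\bar K\ge 1/2>K$, it then suffices to control the two error terms $\norm{\w_k-\w^*}$ and $\lvert t_k-t^*\rvert$ and to show their combined contribution to the margin is at most $\bar K-K$, which after the routine bookkeeping reproduces the numerator of $\Delta$ in \eqref{eqn:Delta}.

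First I would bound $\norm{\w_k-\w^*}$. Using the KKT identity $\w^*=C^T\u^*-D^T\v^*$ from \eqref{eqn:svm-kkt} together with the definition $\w_k=C^T\u_k-D^T\v_k$, we get $\w_k-\w^*=H^T\big((\u_k,\v_k)-(\u^*,\v^*)\big)$ with $H^T=(C^T\ -D^T)$. As each row of $H$ has norm at most $R$, we have $\norm{H}_2^2\le\norm{H}_F^2\le nR^2$ (the same bound used in Theorem~\ref{thm:L-smooth-g}), so $\norm{\w_k-\w^*}\le\sqrt n\,R\,\norm{(\u_k,\v_k)-(\u^*,\v^*)}\le\sqrt n\,R\,\Delta$. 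The factor $\sqrt n$ here is precisely what the $\sqrt{2n}$ in the denominator of \eqref{eqn:Delta} is designed to absorb, so this term stays $O(1)$ and is readily pushed below $\bar K-K$.

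The crux of the argument is bounding the bias error $\lvert t_k-t^*\rvert$. Both $t^*$ and $t_k$ are roots of the nondecreasing map $\Phi(\w,t):=\tfrac{d}{dt}f(\w,t)=\sum_{i=1}^j\psi'(\c_i^T\w+t)-\sum_{i=1}^l\psi'(-\d_i^T\w-t)$, at $\w=\w^*$ and $\w=\w_k$ respectively. Because $\psi'$ in \eqref{eqn:psi-prime} is $\tfrac1\mu$-Lipschitz with $\tfrac1\mu=128/n$, the two maps are uniformly close: $\lvert\Phi(\w_k,t)-\Phi(\w^*,t)\rvert\le\tfrac1\mu\sum_i(\norm{\c_i}+\norm{\d_i})\norm{\w_k-\w^*}\le 128R\norm{\w_k-\w^*}$, where the $n$ cancels. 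I would convert closeness of the two maps into closeness of their roots by a two-sided monotonicity argument: evaluate $\Phi(\w_k,\cdot)$ at $t^*\pm s$ and show it changes sign once $s$ exceeds a threshold determined by how fast $\Phi(\w^*,\cdot)$ departs from $0$. That departure rate is furnished by the $\ge 2n/3$ well-classified points whose optimal margins lie in (or are driven into) the active quadratic band $[1/2,1]$ of $\psi$; the clustering of each class within radius $\rho$ (so all well-classified margins of one class agree to within $2\rho\norm{\w_k}$) and the noise fraction $\le\nu$ enter as correction terms, which is exactly where the $\rho(1+\xi)(\tfrac43+2\nu)$, $\nu(\tfrac94+2\xi)$, and $3K^2$ contributions in \eqref{eqn:K-conds} and \eqref{eqn:Delta} originate. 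Assembling the two bounds gives $\norm{\c_i}\norm{\w_k-\w^*}+\lvert t_k-t^*\rvert\le\bar K-K$, and hence $\c_i^T\w_k+t_k\ge 1-\bar K$ for $i\in J$ and the mirror inequality $\d_i^T\w_k+t_k\le-1+\bar K$ for $i\in L$.

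The main obstacle is the $\lvert t_k-t^*\rvert$ step, because $\Phi(\w_k,\cdot)$ (equivalently $f(\w_k,\cdot)$) can be locally flat: when many well-classified points have optimal margin strictly greater than $1$ their $\psi'$ vanishes, so one cannot invoke strong convexity in $t$ directly. This flat regime is harmless, since there the well-classified margins are safely $\ge 1>1-\bar K$, but it must be separated from the active regime, and the clustering-plus-noise bookkeeping needed to extend a single representative margin bound to \emph{all} well-classified points of a class (mirroring the passage from Lemma~\ref{lem:theta-bound} to Corollary~\ref{cor:theta-bound}, but now using only stationarity in $t$) is the delicate part that fixes the precise constants.
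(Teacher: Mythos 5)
Your high-level plan --- compare $(\w_k,t_k)$ to $(\w^*,t^*)$, bound $\lVert\w_k-\w^*\rVert$ (your bound $\sqrt{n}R\Delta$ is correct, even slightly tighter than the paper's $(\sqrt{j}+\sqrt{l})R\Delta$), and then bound $\lvert t_k - t^*\rvert$ --- has a genuine gap at exactly the step you flag as ``the delicate part,'' and the gap is not merely unfinished bookkeeping: your master inequality
\[
\c_i^T\w_k+t_k\ \geq\ (1-K)-\norm{\c_i}\,\norm{\w_k-\w^*}-\lvert t_k-t^*\rvert
\]
cannot be used uniformly, because $\lvert t_k-t^*\rvert$ need not be small at all. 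The map $\Phi(\w,\cdot)=\tfrac{d}{dt}f(\w,\cdot)$ is only nondecreasing, and its zero set is an interval that can have $O(1)$ length (this happens whenever all well-classified margins exceed $1$ and all noisy margins are saturated below $1/2$, so that $\psi''$ vanishes for every point). In that regime $t_k$ and $t^*$ can sit at opposite ends of their respective flat intervals, $\lvert t_k-t^*\rvert$ is $O(1)$, and the displayed inequality is vacuous. You acknowledge this (``the flat regime is harmless'') but the harmlessness is not a consequence of your decomposition; it requires a separate direct argument about where the root of $\Phi(\w_k,\cdot)$ can lie and what the margins are \emph{there}, including the case where noise imbalance pushes the root outside the nominal flat region. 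That case analysis, plus the monotonicity/threshold argument in the active band, is the entire substance of the proof, and it is not carried out. It is also doubtful that this route would reproduce the specific constants of \eqref{eqn:K-conds} and \eqref{eqn:Delta}, which the theorem statement requires you to work with: those constants are calibrated to a different argument.

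The paper's proof avoids bounding $\lvert t_k-t^*\rvert$ altogether, and this is the key structural idea you are missing. Optimality of $t_k$ is used only through the one-sided function-value inequality $\sum_i\psi(\c_i^T\w_k+t_k)+\sum_i\psi(-\d_i^T\w_k-t_k)\leq\sum_i\psi(\c_i^T\w_k+t^*)+\sum_i\psi(-\d_i^T\w_k-t^*)$, which holds for \emph{any} minimizer $t_k$ (so non-uniqueness and flat regions are irrelevant). The right-hand side is then transferred to $(\w^*,t^*)$ using Lipschitz continuity of $\psi$ and the bound on $\lVert\w_k-\w^*\rVert$, the total loss at the optimum is bounded via the data model (well-classified points contribute at most $\gamma K^2$ each, noisy points an $O(\gamma)$ amount), and the cluster structure ($\lVert\c_i-\c_{i_c}\rVert\leq 2\rho$) lets one charge essentially the whole loss budget to $j_0$ near-identical copies of a single well-classified point, forcing $\psi(\c_{i_c}^T\w_k+t_k)\leq\gamma(\bar K-\tfrac14)$. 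Finally, since $\psi$ is decreasing and explicitly invertible, a small loss converts directly into the margin bound $\c_{i_c}^T\w_k+t_k\geq 1-\bar K$. In short: the paper controls margins through loss values, not through proximity of $t_k$ to $t^*$, and that is precisely what makes the degenerate geometry you worry about a non-issue.
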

  \begin{proof}
    Fix an $i_c \in J$. Then
    \begin{align} \label{eqn:j_0-c}
      \begin{split}
        \sum_{i=1}^j \psi(\c_i^T\w_k + t_k) &= j_0\psi(\c_{i_c}^T\w_k + t_k)\\
                                            &\ \ \ + \sum_{i \in J\setminus\{i_c\}} \left(\psi(\c_i^T\w_k + t_k) - \psi(\c_{i_c}^T\w_k + t_k)\right)\\
                                            &\ \ \ + \sum_{i\notin J} \psi(\c_i^T\w_k + t_k).
      \end{split}
    \end{align}
    Let us focus on bounding $j_0\psi(\c_{i_c}^T\w_k + t_k)$.
    Suppose $(\w^*,t^*)$ are optimal solutions to \eqref{eqn:dpd}. By definition of $t_k$ and since minimizing the objective in \eqref{prob:tprob}
    is equivalent to minimizing $\sum_{i=1}^j \psi(\c_i^T\w_k + t) + \sum_{i=1}^l \psi(-\d_i^T\w_k - t)$,
    we may write
    \begin{align*}
      \sum_{i=1}^j \psi(\c_i^T\w_k + t_k) + \sum_{i=1}^l \psi(-\d_i^T\w_k - t_k) &\leq \sum_{i=1}^j \psi(\c_i^T\w_k + t^*) + \sum_{i=1}^l \psi(-\d_i^T\w_k - t^*)\\
                                                                                 &= \sum_{i=1}^j \psi(\c_i^T\w^* + t^*) + \sum_{i=1}^l \psi(-\d_i^T\w^* - t^*)\\
                                                                                 &\ \ \ + \sum_{i=1}^j \left(\psi(\c_i^T\w_k + t^*) - \psi(\c_i^T\w^* + t^*)\right)\\
                                                                                 &\ \ \ + \sum_{i=1}^l \left(\psi(-\d_i^T\w_k - t^*) - \psi(-\d_i^T\w^* - t^*)\right).
    \end{align*}
    Since $\psi$ is always nonnegative, 
    \[
      \sum_{i=1}^j \psi(\c_i^T\w_k + t_k) \leq \sum_{i=1}^j \psi(\c_i^T\w_k + t_k) + \sum_{i=1}^l \psi(-\d_i^T\w_k - t_k),
    \]
    so using \eqref{eqn:j_0-c}, we have an upper bound on $j_0\psi(\c_{i_c}^T\w_k + t_k)$, namely
    \begin{align*}
        j_0\psi(\c_{i_c}^T\w_k + t_k) &\leq \sum_{i=1}^j \psi(\c_i^T\w^* + t^*) + \sum_{i=1}^l \psi(-\d_i^T\w^* - t^*)\\
                                      &\ \ \ + \sum_{i=1}^j \left(\psi(\c_i^T\w_k + t^*) - \psi(\c_i^T\w^* + t^*)\right)\\
                                      &\ \ \ + \sum_{i=1}^l \left(\psi(-\d_i^T\w_k - t^*) - \psi(-\d_i^T\w^* - t^*)\right)\\
                                      &\ \ \ - \sum_{i \in J\setminus\{i_c\}} \left(\psi(\c_i^T\w_k + t_k) - \psi(\c_{i_c}^T\w_k + t_k)\right)\\
                                      &\ \ \ - \sum_{i\notin J} \psi(\c_i^T\w_k + t_k).
    \end{align*}
    Using Lemma~\ref{lem:L-psi}, we can upper bound the following
    \begin{align*}
      \sum_{i=1}^j \left(\psi(\c_i^T\w_k + t^*) - \psi(\c_i^T\w^* + t^*)\right) \leq \sum_{i=1}^j\gamma\lvert\c_i^T\w_k - \c_i^T\w^*\rvert \leq j\gamma R\lVert\w_k - \w^*\rVert,\\
      \sum_{i=1}^l \left(\psi(-\d_i^T\w_k - t^*) - \psi(-\d_i^T\w^* - t^*)\right) \leq \sum_{i=1}^l \gamma\lvert\d_i^T\w_k - \d_i^T\w^*\rvert \leq l\gamma R\lVert\w_k - \w^*\rVert.
    \end{align*}
    To bound $\lVert\w_k - \w^*\rVert$, we use that $\lVert(\u_k,\v_k) - (\u^*,\v^*)\rVert \leq \Delta$ along with the KKT conditions \eqref{eqn:svm-kkt},
    which tell us that $\w^* = C^T\u^* - D^T\v^*$:
    \begin{align*}
      \lVert\w_k - \w^*\rVert &= \big\lVert\left(C^T\u_k - C^T\u^*\right) + \left(D^T\v^* - D^T\v_k\right)\big\rVert\\
                               &\leq \lVert C^T\rVert_2\lVert\u_k - \u^*\rVert + \lVert D^T\rVert_2\lVert\v_k - \v^*\rVert\\
                               &\leq \sqrt{j}R\lVert\u_k - \u^*\rVert + \sqrt{l}R\lVert\v_k - \v^*\rVert\\
                               &\leq (\sqrt{j} + \sqrt{l})R\Delta.
    \end{align*}
    So we get the bound
    \begin{align} \label{eqn:lipschitz-w-bound}
      \begin{split}
        \sum_{i=1}^j \left(\psi(\c_i^T\w_k + t^*) - \psi(\c_i^T\w^* + t^*)\right) \leq  j\left(\sqrt{j} + \sqrt{l}\right)\gamma R^2\Delta,\\
        \sum_{i=1}^l \left(\psi(-\d_i^T\w_k - t^*) - \psi(-\d_i^T\w^* - t^*)\right) \leq l\left(\sqrt{j} + \sqrt{l}\right)\gamma R^2\Delta.
      \end{split}
    \end{align}
    We may again use that $\psi$ is Lipschitz to bound $\psi(\c_{i_c}^T\w_k + t_k) -\psi(\c_i^T\w_k + t_k)$:
    \begin{align} \label{eqn:lipschitz-c-bound}
      \begin{split}
      -\sum_{i \in J\setminus\{i_c\}} \left(\psi(\c_i^T\w_k + t_k) - \psi(\c_{i_c}^T\w_k + t_k)\right) &= \sum_{i \in J\setminus\{i_c\}} \left(\psi(\c_{i_c}^T\w_k + t_k) -\psi(\c_i^T\w_k + t_k)\right)\\
                                                                                                       &\leq \sum_{i \in J\setminus\{i_c\}} \gamma\lvert \c^T_{i_c}\w_k - \c_i^T\w_k\rvert\\
                                                                                                       &\leq \sum_{i \in J\setminus\{i_c\}}\gamma\lVert\c_{i_c} - \c_i\rVert\lVert\w_k\rVert\\
                                                                                                       &\leq (j_0 - 1)\gamma 2\rho\left((1+\xi) + \left(\sqrt{j} + \sqrt{l}\right)R\Delta\right),
      \end{split}
    \end{align}
    where the last line uses $\lVert\c_{i_c} - \c_i\rVert \leq 2\rho$, and $\lVert\w_k\rVert \leq \lVert\w^*\rVert + \lVert \w_k - \w^*\rVert \leq (1+\xi) + (\sqrt{j} + \sqrt{l})R\Delta$.
    Since $\c_i^T\w^* + t^* \geq 1 - K$ and $-\d_i^T\w^* - t^* \geq 1 - K$ for all well-classified points, we have that
    for well-classified points, $\psi(\c_i^T\w^* + t^*)$ and $\psi(-\d_i^T\w^* - t^*)$ are bounded above by $\gamma K^2 = 64K^2/n$. It follows that
    \[
      \sum_{i\in J} \psi(\c_i^T\w^* + t^*) + \sum_{i\in L} \psi(-\d_i^T\w^* - t^*) \leq n_0\gamma K^2.
    \]
    For points that are not well-classified, we use the bound on $t^*$ from the statement of Corollary~\ref{cor:theta-bound}:
    \[
      (1 - K) - \c_{j_c}^T\w^* \leq t^* \leq (-1 + K) - \d_{j_d}^T\w^*,
    \]
    where $j_c := \argmin_{j \in J} \c_j^T\w^*$ and $j_d := \argmin_{j \in L} -\d_j^T\w^*$. Then for all $i \notin J$, we have
    \[
      \c_i^T\w^* + t^* \geq (1-K) + (\c_i - \c_{j_c})^T\w^* \geq (1-K) + \lVert\c_i - \c_{j_c}\rVert\lVert\w^*\rVert.
    \]
    Under the data assumptions, $\lVert\c_i - \c_{j_c}\rVert \geq -(\sigma_1 + \sigma_2) - 2\rho$, so
    \[
      \c_i^T\w^* + t^* \geq (1-K) - \left(\sigma_1 + \sigma_2 + 2\rho\right)(1+\xi).
    \]
    Feeding this into $\psi$ and bounding $\sigma_1,\sigma_2$ by $1$, we get that
    \[
      \psi(\c_i^T\w^* + t^*) \leq \left(1 - \frac{1}{4}\right)\gamma + \left(1 + 2\rho + K + \xi(2 + 2\rho)\right)\gamma.
    \]
    Similar work can be done to show the same bound for $\psi(-d_i^T\w^* + t^*)$. In all, we obtain the below bound,
    where $n_0 = j_0 + l_0$.
    \begin{align} \label{eqn:bound-w*-t*}
      \begin{split}
        \sum_{i=1}^j \psi(\c_i^T\w^* + t^*) + \sum_{i=1}^l \psi(-\d_i^T\w^* - t^*) \leq n_0\gamma K^2 + n_{noise}\left(2 - \frac{1}{4} + 2\rho + K + \xi(2 + 2\rho)\right)\gamma.
      \end{split}
    \end{align}
    Putting together \eqref{eqn:lipschitz-w-bound}, \eqref{eqn:lipschitz-c-bound}, \eqref{eqn:bound-w*-t*}, and that $-\psi(\c_i^T\w_k + t_k) \leq 0$, we obtain the bound
    \begin{align*}
      j_0\psi(\c_{i_c}^T\w_k + t_k) &\leq n_0\gamma K^2 + n_{noise}\left(\frac{7}{4} + 2\rho + K + \xi(2 + 2\rho)\right)\gamma\\
                                    &\ \ \ + n(\sqrt{j} + \sqrt{l})\gamma R^2\Delta\\
                                    &\ \ \ + (j_0 - 1)\gamma 2\rho \left((1+\xi) + (\sqrt{j} + \sqrt{l})R\Delta\right)\\
                                    &\leq n_0\gamma K^2 + \nu n\left(\frac{7}{4} + 2\rho + K + \xi(2 + 2\rho)\right)\gamma\\
                                    &\ \ \ + n\sqrt{2n}\gamma R^2\Delta\\
                                    &\ \ \ + j_0\gamma 2\rho \left((1+\xi) + \sqrt{2n}R\Delta\right).
    \end{align*}
    Grouping terms with $\Delta$ we have
    \begin{align*}
      j_0\psi(\c_{i_c}^T\w_k + t_k) &\leq n_0\gamma K^2 + \nu n\left(\frac{7}{4} + 2\rho + K + \xi(2 + 2\rho)\right)\gamma\\
                                    &\ \ \ + j_0\gamma 2\rho (1+\xi) + \left(n\sqrt{2n}R^2 + j_02\rho\sqrt{2n}R\right)\gamma\Delta.
    \end{align*}
    Since $l_0 \geq n/3$, and $n_{\text{noise}} > 0$, $j_0 = n - l_0 - n_{\text{noise}} \leq 2n/3$, thus
    \begin{align*}
      n\sqrt{2n}R^2 + j_02\rho\sqrt{2n}R &\leq n\sqrt{2n}R^2 + \frac{4n}{3}\rho\sqrt{2n}R\\
                                         &= 4\sqrt{2}n^{3/2}\left(1 + \frac{2}{3}\rho\right),
    \end{align*}
    where on the last line we use that $R \leq 2$.
    The bound on $j_0\psi(\c_{i_c}^T\w_k + t_k)$ now becomes
    \begin{align*}
      j_0\psi(\c_{i_c}^T\w_k + t_k) &\leq n_0\gamma K^2 + \nu n\left(\frac{7}{4} + 2\rho + K + \xi(2 + 2\rho)\right)\gamma\\
                                    &\ \ \ + j_0\gamma 2\rho (1+\xi) + 4\sqrt{2}n^{3/2}\left(1 + \frac{2}{3}\rho\right)\gamma\Delta,
    \end{align*}
    and if we substitute $\Delta$ from \eqref{eqn:Delta}, this becomes
    \begin{align*}
      j_0\psi(\c_{i_c}^T\w_k + t_k) &\leq n_0\gamma K^2 + \nu n\left(\frac{7}{4} + 2\rho + K + \xi(2 + 2\rho)\right)\gamma + j_0\gamma 2\rho (1+\xi) + \frac{n}{3}\left(\bar{K} - 3K^2\right)\gamma\\
                                    &\ \ \ -n\rho(1 + \xi)\left(\frac{4}{3} + 2\nu\right)\gamma - \nu n\left(\frac{9}{4} + 2\xi\right)\gamma - \frac{n}{6}\gamma.
    \end{align*}
    We can bound $\frac{n}{3}(\bar{K} - 3K^2)$:
    \[
      \frac{n}{3}(\bar{K} - 3K^2) \leq \frac{n}{3}(\bar{K} - K^2) - \frac{2n}{3}K^2 \leq j_0(\bar{K} - K^2) - l_0K^2 = j_0\bar{K} - n_0K^2,
    \]
    and also
    \[
      -\frac{n}{6} = -\frac{2n/3}{4n}n \leq -\frac{j_0}{4n}n = -\frac{j_0}{4},\quad \text{and}\quad 2j_0\rho(1+\xi) \leq \frac{4n\rho}{3}(1+\xi).
    \]
    So
    \begin{align*}
      j_0\psi(\c_{i_c}^T\w_k + t_k) &\leq j_0\gamma\bar{K} + \nu n\left(\frac{7}{4} + 2\rho + K + \xi(2 + 2\rho)\right)\gamma + \gamma\frac{4n\rho}{3}(1+\xi)\\
                                    &\ \ \ -n\rho(1 + \xi)\left(\frac{4}{3} + 2\nu\right)\gamma - \nu n\left(\frac{9}{4} + 2\xi\right)\gamma - \frac{j_0}{4}\gamma\\
                                    &\leq j_0\gamma\bar{K} + \nu n\left(\frac{9}{4} + 2\rho(1+\xi) + 2\xi\right)\gamma + \gamma\frac{4n\rho}{3}(1+\xi)\\
                                    &\ \ \ -n\rho(1 + \xi)\left(\frac{4}{3} + 2\nu\right)\gamma - \nu n\left(\frac{9}{4} + 2\xi\right)\gamma - \frac{j_0}{4}\gamma\\
                                    &= j_0\left(\bar{K} - \frac{1}{4}\right)\gamma,
    \end{align*}
    where in the second inequality we used the assumption that $K < 1/2$.
    We have shown that $\psi(\c_{i_c}^T\w_k + t_k) \leq \left(\bar{K} - \frac{1}{4}\right)\gamma$. Since, $i_c$
    was chosen arbitrarily, it follows that for any well-classified $\c_i$
    \[
      \psi(\c_i^T\w_k + t_k) \leq \gamma\left(\bar{K} - \frac{1}{4}\right) = \gamma\left(\frac{3}{4} - \left(1 - \bar{K}\right)\right).
    \]
    By our choice of $\gamma$ and $\mu$, for $\theta \leq \frac{1}{2}$
    \[
        \psi(\theta) = \gamma\left(\frac{3}{4} - \theta\right).
    \]
    Moreover, since $\bar{K} \in [1/2,1)$, we have $1-\bar{K} \in (0,1/2]$, and
    \[
        \psi(1-\bar{K}) = \gamma\left(\frac{3}{4} - (1-\bar{K})\right) \geq \psi(\c_i^T\w_k + t_k).
    \]
    We know $\psi$ is decreasing, so the only way this can happen is if $\c_i^T\w_k + t_k \geq 1 - \bar{K}$. Identical steps show that for any well-classified $\d_i$, we have that $-\d_i^T\w_k - t_k \geq 1 - \bar{K}$.
  \end{proof}

  \section{Numerical experiment on unbalanced data for the ESP} \label{app:esp-results2}
    \begin{table}[htbp] 
    \centering
    \caption{Comparing the number of iterations and time taken before finding a separating hyperplane in unbalanced data between early-stopping and tolerance-based stopping ($\texttt{tol} = 10^{-6}$).}
    \label{tab:esp-results2}
    \resizebox{\textwidth}{!}{%
    \begin{tabular}{ccc|cc|cc}
    \toprule
    \textbf{0 Ellipsoids} & \textbf{1 Ellipsoids} & \textbf{Columns in $\bm{\mA}$} &
    \multicolumn{2}{c}{\textbf{FISTA (early-stopping)}} & \multicolumn{2}{c}{\textbf{FISTA (tolerance-based)}} \\
    \cmidrule(lr){4-5} \cmidrule(l){6-7} & & & \textbf{iterations} & \textbf{time(s)} & \textbf{iterations} & \textbf{time(s)} \\\midrule
    5 & 25 & 23550 & 5 & 0.268 & 1270 & 15.088 \\
    25 & 5 & 23550 & 6 & 0.285 & 1166 & 13.612 \\
    10 & 100 & 86350 & 10 & 0.734 & 1773 & 97.589 \\
    100 & 10 & 86350 & 11 & 0.722 & 1468 & 79.105 \\
    1 & 125 & 98910 & 24 & 1.476 & 1516 & 97.442 \\
    125 & 1 & 98910 & 34 & 1.933 & 1906 & 101.261 \\
    \bottomrule
    \end{tabular}
    }
    \end{table}
\end{appendices}
\end{document}